\numberwithin{equation}{section}
\ifodd\thepage{\footnotesize { Twisted Hodge groups of Hilbert schemes of points on surfaces}
\hfill{\bf\thepage}}\else
\newcommand{\sectionpoint}{\if \@empty\titlesec \else}              
\newcommand*{\justifyheading}{\raggedright}
\titleformat{\section}{\normalfont \Large \bfseries \justifyheading}{\thesection.}{0.5em}{}
\titleformat{\subsection}[runin]{\normalfont \large \bfseries \justifyheading}{\thesubsection.}{0.5em}{}[.]
\titleformat{\subsubsection}[runin]{\normalfont \normalsize \bfseries \justifyheading}{\thesubsubsection.}{0.5em}{}[  \protect{\rule[3pt]{10pt}{0.5pt}}]
\theoremstyle{plain}
\newtheorem*{theorem*}{Theorem}
\newtheorem{theorem}{Theorem}[section]
\newtheorem{prop}[theorem]{Proposition}
\newtheorem{lemma}[theorem]{Lemma}
\newtheorem{cor}[theorem]{Corollary}
\theoremstyle{definition}
\newtheorem{definition}[theorem]{Definition}
\theoremstyle{remark}
\newtheorem{rmk}[theorem]{Remark}
\newtheorem{question}[theorem]{Question}
\renewcommand{\tilde}{\widetilde}     
\newcommand{\CC}{\mathbb{C}}
\newcommand{\PP}{\mathbb{P}}
\newcommand{\QQ}{\mathbb{Q}}
\newcommand{\ZZ}{\mathbb{Z}}
\newcommand{\h}{\mathfrak{h}}
\DeclareMathOperator{\rk}{rk}
\newcommand{\HM}{\mathsf{HM}}
\newcommand{\VHS}{\mathsf{VHS}}
\renewcommand{\D}{\mathscr{D}}
\newcommand{\Def}{\mathrm{Def}}
\newcommand{\DR}{\mathrm{DR}}
\newcommand{\GL}{\mathrm{GL}}
\newcommand{\gr}{\mathrm{gr}}
\newcommand{\Hilb}{\mathrm{Hilb}}
\newcommand{\IC}{\mathrm{IC}}
\newcommand{\id}{\mathsf{id}}
\newcommand{\ICH}{\mathcal{IC}}
\newcommand{\Perv}{\mathsf{Perv}}
\newcommand{\R}{\mathbf{R}}
\newcommand{\p}{\mathsf{p}}
\newcommand{\Sym}{\mathrm{Sym}}
\begin{document}

\title{Twisted Hodge groups and deformation theory of Hilbert schemes of points on surfaces via Hodge modules}

\author{Lie Fu}
\date{ }
\maketitle

\begin{abstract}
Given a smooth compact complex surface together with a holomorphic line bundle on it, using the theory of Hodge modules, we compute the twisted Hodge groups/numbers of Hilbert schemes (or Douady spaces) of points on the surface with values in the naturally associated line bundle.
This proves an amended version of Boissi\`ere's conjecture proposed by the author in his joint work with Belmans and Krug, and extends G\"ottsche--Soergel's formula for Hodge numbers and G\"ottsche's formula for refined $\chi_y$-genera to any compact complex surface, without K\"ahlerness assumption. As an application, we determine the tangent space and the obstruction space of the formal deformation theory of Hilbert schemes of points on compact complex surfaces. Analogous results are obtained for nested Hilbert schemes.
\end{abstract}

\tableofcontents


\newpage


\section{Introduction}

\subsection{Hilbert schemes of points on surfaces}

Let $S$ be a smooth projective complex surface. For a positive integer $n$, the $n$-th symmetric power $S^{(n)}$, defined as the quotient of $S^n$ by the natural action of the symmetric group $\mathfrak{S}_n$, is a $2n$-dimensional projective variety with Gorenstein quotient singularities (\cite{Aramova}). 
A natural smooth birational model of $S^{(n)}$ is provided by the so-called Hilbert scheme of length-$n$ subschemes on $S$, denoted by  $\Hilb^nS$, which is a $2n$-dimensional smooth projective variety by a theorem of Fogarty \cite{Fogarty}. The Hilbert--Chow morphism 
\begin{equation}
\pi\colon \Hilb^n(S)\to S^{(n)}
\end{equation}
is a crepant resolution of singularities, that is, $\pi^*\omega_{S^{(n)}}\cong \omega_{\Hilb^n(S)}$.
The above definitions make sense more generally for a compact complex surface $S$, with $\Hilb^n(S)$ replaced by the $n$-th Douady space and the Hilbert--Chow morphism replaced by the Douady--Barlet morphism. Although almost all the results in this paper, whenever make sense, work in the more general complex analytic setting, we will keep using the algebraic notation and terminology.  For standard references, see the lecture notes of G\"ottsche \cite{Gottsche-LectureNotes}, Nakajima \cite{Nakajima-LectureNotesHilb}, and Lehn \cite{Lehn-LectureHilbert}.

A general principle is that invariants of $\Hilb^nS$ can often be expressed in terms of the same type of invariants of $S$. To put our results in the context, let us list below some known results exemplifying this principle. Considering all the Hilbert schemes $\{\Hilb^nS\}_{n\in \mathbb{N}}$ simultaneously often leads to a neater formula, and renders extra representation-theoretic structures more transparent. For this reason, results in this paper are stated in an all-$n$-together form, as isomorphisms of multi-graded vector spaces or equalities of generating series. From them, the formulas for an individual Hilbert scheme, which we omit, can be easily deduced.

\begin{itemize}
	\item \textit{Betti numbers} (G\"ottsche  \cite{Gottsche-BettiNumber}):
	\begin{equation}
	\label{eqn:BettiNumbers}
	\sum_{n\geq 0}\sum_{i\geq 0} \operatorname{b}_i(\Hilb^nS)x^it^n=\prod_{k\geq 1}\prod_{i\geq 0} (1-(-1)^ix^{i+2k-2}t^k)^{-(-1)^i\operatorname{b}_i(S)}.
	\end{equation}

	\item \textit{Cohomology} and \textit{Hodge structures} (G\"ottsche--Soergel \cite{GottscheSoergel}):
	\begin{equation}
	\label{eqn:Hilb-HodgeStructure}
	\bigoplus_{n\geq 0} H^*(\Hilb^nS, \QQ)(n)[2n]t^n\cong \Sym^\bullet\left(\bigoplus_{k\geq 1} H^*(S, \QQ)(1)[2] t^k\right),
	\end{equation}
	where $-(m):=-\otimes\QQ(m)$ stands for the $m$-th Tate twist of a Hodge structure, $[m]$ is the standard degree shifting, and $\Sym^\bullet$ is the total symmetric power of the bigraded \textit{super} vector space $\bigoplus_{k\geq 1} H^*(S, \QQ)(1)[2] t^k$  subject to the super-sign rule with respect to the cohomological degree $*$.
	Moreover,  thanks to the work of Nakajima \cite{Nakajima-Annals, Nakajima-LectureNotesHilb} and Grojnowski \cite{Grojnowski}, the left-hand side of \eqref{eqn:Hilb-HodgeStructure} is identified with the Fock space representation of the Heisenberg Lie algebra associated with $H^*(S, \QQ)$.
	
	The next two formulas are direct consequences of \eqref{eqn:Hilb-HodgeStructure} (see \cite[Theorem 2.3.14]{Gottsche-LectureNotes}):
	\item  \textit{Hodge numbers}:
	\begin{equation}
	\label{eqn:HodgeNumbers}
	\sum_{n\geq 0}\sum_{p, q\geq 0} h^{p,q}(\Hilb^nS)x^py^qt^n=\prod_{k\geq 1}\prod_{p,q \geq 0} (1-(-1)^{p+q}x^{p+k-1}y^{q+k-1}t^k)^{-(-1)^{p+q}h^{p,q}(S)}.
	\end{equation}
	\item \textit{$\chi_y$-genera}:
	\begin{equation}
	\label{eqn:ChiyGenera}
	\sum_{n\geq 0} \chi_{-y}(\Hilb^nS)t^n=\exp\left(\sum_{m\geq 1} \frac{t^m}{m}\cdot \frac{\chi_{-y^m}(S)}{1-(yt)^m}\right).
	\end{equation}
	
	\item \textit{Hochschild homology} (by the Hochschild--Kostant--Rosenberg isomorphism and \eqref{eqn:Hilb-HodgeStructure}):
	\begin{equation}
	\label{eqn:HH}
	\bigoplus_{n\geq 0} HH_*(\Hilb^nS)t^n\cong \Sym^\bullet\left(\bigoplus_{k\geq 1} HH_*(S)t^k\right).
	\end{equation}
	As a generalization:
	\item \textit{Hochschild--Serre cohomology} (Belmans--Fu--Krug \cite{BFK}), for any $k\in \ZZ$:
	\begin{equation}
	\label{eqn:HS}
	\bigoplus_{n\geq 0} HS^*_k(\Hilb^nS)t^n\cong \Sym^\bullet\left(\bigoplus_{i\geq 1} HS^*_{1+(k-1)i}(S)t^i\right).
	\end{equation}
	Setting $k=0$:
	\item \textit{Hochschild cohomology} (Belmans--Fu--Krug \cite{BFK}) :
	\begin{equation}
	\label{eqn:HH*}
	\bigoplus_{n\geq 0} HH^*(\Hilb^nS)t^n\cong \Sym^\bullet\left(\bigoplus_{i\geq 1} HS^*_{1-i}(S)t^i\right).
	\end{equation}
	\item \textit{In the Grothendieck ring of varieties} (G\"ottsche \cite{Gottsche-MotiveHilb}):
	\begin{equation}
	\sum_{n\geq 0} [\Hilb^nS]\cdot \mathbb{L}^{-n}t^n = \Sym^\bullet\left(\sum_{k\geq 1} [S]\cdot \mathbb{L}^{-1} t^k\right).
	\end{equation}
	\item \textit{Chow motives} (de Cataldo--Migliorini \cite{DeCataldoMigliorini-MotiveOfHilbertScheme}):
	\begin{equation}
	\bigoplus_{n\geq 0} \h(\Hilb^nS)(n)t^n\cong \Sym^\bullet\left(\bigoplus_{k\geq 1} \h(S)(1) t^k\right).
	\end{equation}
	\item \textit{Derived categories} of perfect complexes (Bridgeland--King--Reid \cite{BridgelandKingReid} and Haiman \cite{Haiman}):
	\begin{equation}
	\operatorname{D^b_{coh}}(\Hilb^nS)\cong \operatorname{D^b_{coh}}([S^n/\mathfrak{S}_n])\cong \Sym^n\operatorname{D^b_{coh}}(S),
	\end{equation}
	where $\Sym^n$ is in the sense of Ganter--Kapranov \cite{GanterKapranov}.
\end{itemize}

Now if the smooth projective complex surface $S$ is equipped with a line bundle $L$, the line bundle $L^{\boxtimes n}:=p_1^*L\otimes \cdots \otimes p_n^*L$ on $S^n $ is endowed with a natural $\mathfrak{S}_n$-linearization. We define the following line bundle on the symmetric power $S^{(n)}$ by the  invariant push-forward:
\begin{equation}
\label{eqn:L(n)}
L_{(n)}:=\varpi_*(L^{\boxtimes n})^{\mathfrak{S}_n},
\end{equation}
where $\varpi\colon S^n \to S^{(n)}$ is the quotient map and $-^{\mathfrak{S}_n}$ stands for the (exact) functor of taking invariants.  Pulling back via the Hilbert--Chow morphism  $\pi\colon \Hilb^n(S)\to S^{(n)}$,  we define the following\footnote{Our notation for this natural line bundle $L_n$ follows \cite{EllingsrudGottscheLehn}. This bundle is denoted by $\mu(L)$ in \cite{Gottsche-RefinedVerlinde-2020}, while the notation $L_n$ in \cite{Gottsche-RefinedVerlinde-2020} refers to what we denote by $L_{(n)}$.} naturally associated line bundle $L_n$ on $\Hilb^nS$:
\begin{equation}
\label{eqn:Ln}
L_n:=\pi^*L_{(n)}.
\end{equation}
Note that $(\mathscr{O}_S)_n\cong\mathscr{O}_{\Hilb^nS}$ and $(\omega^{\otimes k}_S)_n\cong \omega^{\otimes k}_{\Hilb^nS}$ for any $k\in \ZZ$.

Similarly to the above exemplified principle, many invariants of the pair $(\Hilb^nS, L_n)$ can be expressed in terms of invariants of the pair $(S, L)$. For example:

\begin{itemize}
	\item \textit{$\chi_y$-genera with coefficients} (G\"ottsche \cite[Corollary 1.2]{Gottsche-RefinedVerlinde-2020} as reformulated in  \cite[Proposition 5.11]{BFK}):
	\begin{equation}
	\label{eqn:ChiyGenusCoeff}
	\sum_{n\geq 0} \chi_{-y}(\Hilb^nS, L_n)t^n=\prod_{k\geq 1}\prod_{p\geq 0} \left(1-y^{p+k-1}t^k\right)^{-(-1)^p\chi(S,\Omega_S^p\otimes L^{\otimes k})}.
	\end{equation}
	The right-hand side can also be reorganized into an expression in terms of the $\chi_y$-genera of $S$ with coefficients in powers of $L$, see \cite[Remark 5.12]{BFK} or \eqref{eqn:ChiyGenusCoeff2} below.
	\item \textit{Hochschild homology with coefficients} (Belmans--Fu--Krug \cite[Corollary 3.22]{BFK}):
	\begin{equation}\label{eqn:HHCoeff}
	\bigoplus_{n\geq 0} HH_*(\Hilb^nS, L_n)t^n\cong \Sym^{\bullet} \left(\bigoplus_{k\geq 1} HH_*(S, L^{\otimes k})t^k\right).
	\end{equation}
\end{itemize}

The main goal of the paper, achieved in \Cref{thm:main-cohomology} and \Cref{thm:main-numbers} below, is to establish a common refinement of G\"ottsche--Soergel's  \eqref{eqn:HodgeNumbers}, G\"ottsche's \eqref{eqn:ChiyGenusCoeff} and Belmans--Fu--Krug's \eqref{eqn:HHCoeff}, by computing the so-called \textit{twisted Hodge groups} of the pair $(\Hilb^nS, L_n)$.

\subsection{Main result: twisted Hodge groups/numbers}

Given a compact complex manifold $X$ equipped with a holomorphic line bundle $\mathcal{L}$, for any integers $p,q$, we define, following Boissi\`ere \cite{MR2932167}, the $(p,q)$-th \textit{twisted Hodge group} as
\begin{equation}
\label{eqn:TwistedHodge}
H^{p,q}(X, \mathcal{L}):=H^q(X, \Omega_X^p\otimes \mathcal{L}),
\end{equation}
whose dimension
\begin{equation}
h^{p,q}(X, \mathcal{L}):=\dim H^q(X, \Omega_X^p\otimes \mathcal{L})
\end{equation}
is called the $(p,q)$-th \textit{twisted Hodge number} of the pair $(X, \mathcal{L})$. Note that $H^{p,q}(X, \mathcal{L})$ is the Dolbeault cohomology of $X$ with values in $\mathcal{L}$.  The usual Hodge numbers correspond to the case where $\mathcal{L}\cong \mathscr{O}_X$. If $\mathcal{L}$ is not trivial or $X$ is not K\"ahler, the twisted Hodge numbers are in general \textit{not} the $(p,q)$-summands of some natural Hodge structure; this makes their computation often inaccessible by topological methods.

Keeping the notation as before, the first main result of the paper is to express all the twisted Hodge groups  $H^{p,q}(\Hilb^n(S), L_n)$ in terms of the twisted Hodge groups of $S$ with values in tensor powers of $L$:

\begin{theorem}
	\label{thm:main-cohomology}
	Let $S$ be a smooth compact complex surface. Let $L$ be a holomorphic line bundle on $S$. Let $L_n$ be the line bundle in \eqref{eqn:Ln} on the Douady space $\Hilb^nS$. We have a canonical isomorphism of tri-graded vector spaces:
	\begin{equation}
	\label{eqn:main-cohomology}
	\bigoplus_{n\geq 0}\bigoplus_{p,q=0}^{2n} H^{p,q}(\Hilb^nS, L_n)x^py^qt^n\cong \Sym^\bullet \left(\bigoplus_{k\geq 1}\bigoplus_{p,q=0}^{2}H^{p,q}(S, L^{\otimes k})x^{p+k-1}y^{q+k-1}t^k\right).
	\end{equation}
	where  $\Sym^\bullet$ is taken in the super sense \footnote{That is,  $\Sym^\bullet$ is the total symmetric power of the tri-graded super vector space  $\bigoplus_{k\geq 1}\bigoplus_{p,q=0}^{2}H^{p,q}(S, L^{\otimes k})x^{p+k-1}y^{q+k-1}t^k$ where the parity of a homogeneous element is the parity of its total degree of  $x$ and $y$.} with respect to the total degree of  $x$ and $y$ but in the ordinary sense with respect to the grading given by the degree of $t$.

	Writing in a more succinct way the isomorphism of tri-graded vector spaces:
	\begin{equation}
	\bigoplus_{n\geq 0} H^{\#,\star}(\Hilb^nS, L_n)t^n\cong \Sym^\bullet \left(\bigoplus_{k\geq 1}H^{\#,\star}(S, L^{\otimes k})[1-k, 1-k]t^k\right).
	\end{equation}
	where $[1-k, 1-k]$ denotes the shift of the bigrading $(\#, \star)$.
\end{theorem}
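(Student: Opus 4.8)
The plan is to compute the pushforward $\R\pi_*\Omega_{\Hilb^nS}^p$ along the Hilbert--Chow morphism, refine it with the line bundle twist, and then organize the contributions of the partition strata of the symmetric product into a symmetric algebra. Since $L_n=\pi^*L_{(n)}$, the projection formula gives
\begin{equation}
H^{p,q}(\Hilb^nS,L_n)\cong \mathbb{H}^q\bigl(S^{(n)},\, \R\pi_*\Omega_{\Hilb^nS}^p\otimes L_{(n)}\bigr),
\end{equation}
so everything reduces to understanding $\R\pi_*\Omega_{\Hilb^nS}^p$ together with its interaction with $L_{(n)}$. The crucial observation is that $\Omega_{\Hilb^nS}^p$ is, up to a shift, a graded piece $\gr^F\DR$ of the constant Hodge module $\QQ^H_{\Hilb^nS}[2n]$; by Saito's strictness of the Hodge filtration under proper pushforward, applying $\R\pi_*$ to these graded de Rham complexes computes the graded de Rham complexes of the Hodge-module pushforward $\pi_*\QQ^H_{\Hilb^nS}[2n]$. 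This is the step that replaces classical Hodge theory, and—being entirely at the level of filtered $\mathcal{D}$-modules and coherent cohomology—it makes no use of K\"ahlerness, which is exactly what is needed for an arbitrary compact complex surface (here one uses that the Douady--Barlet morphism is projective, so the analytic decomposition theorem applies).

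Next I would invoke Saito's decomposition theorem for $\pi_*\QQ^H_{\Hilb^nS}[2n]$. The Hilbert--Chow morphism is semismall and crepant, so, as in the motivic decomposition of de Cataldo--Migliorini \cite{DeCataldoMigliorini-MotiveOfHilbertScheme}, the pushforward splits as a direct sum of intersection Hodge modules supported on the closures of the partition strata $S^{(n)}_\lambda$, indexed by partitions $\lambda\vdash n$, each carrying an explicit Tate twist dictated by the (locally constant) fibre dimension of $\pi$. Writing $\lambda=(1^{a_1}2^{a_2}\cdots)$, the stratum $S^{(n)}_\lambda$ is an open subset of $\prod_i S^{(a_i)}$, and the fibre of $\pi$ over a configuration containing a cluster of $k$ coincident points is the punctual Hilbert scheme, whose cohomology is of pure Tate type and of dimension $k-1$. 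This is the source of both the cohomological degree shift and the Hodge/Tate shift encoded by $[1-k,1-k]$.

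The twist is then incorporated by a local computation: over the small-diagonal locus where $k$ points collapse to a single point $s\in S$, the invariant pushforward $L_{(n)}=\varpi_*(L^{\boxtimes n})^{\mathfrak{S}_n}$ is identified with $L^{\otimes k}$, since $L^{\boxtimes k}$ restricts to $L^{\otimes k}$ on the small diagonal. Hence a part of size $k$ contributes precisely $H^{\#,\star}(S,L^{\otimes k})$, shifted by $[1-k,1-k]$. Pulling the strata contributions back to $S^n$, applying the K\"unneth formula, and taking $\mathfrak{S}_n$-invariants (the symmetric power functor being exact) converts the sum over decorated partitions—unordered parts, each of size $k$ weighted by $H^{\#,\star}(S,L^{\otimes k})[1-k,1-k]t^k$—into the total symmetric power asserted in \eqref{eqn:main-cohomology}; the super sign in the combined $x,y$-grading is the Koszul sign governing the symmetric algebra of a graded vector space, while the $t$-grading stays ordinary because $n$ is additive over parts.

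The main obstacle will be controlling the Hodge filtration—equivalently the $p$- (or $x$-) grading—throughout the decomposition. The classical decomposition theorem only yields the cohomological ($q$-) grading, so the entire refinement to twisted Hodge groups rests on carrying out the splitting within Saito's category of mixed Hodge modules and on verifying that the intersection Hodge modules attached to the partition strata are, after restriction, of the expected Tate type with the expected filtration jumps. A secondary subtlety is the precise matching of the $p$-shift $1-k$ with the geometry of the punctual Hilbert scheme, which must be checked to be compatible with the twist by $L^{\otimes k}$; once these Hodge-theoretic bookkeeping statements are in place, the passage to the generating series is purely combinatorial.
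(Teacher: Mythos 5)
Your proposal follows essentially the same route as the paper: projection formula along the Hilbert--Chow morphism, Saito's semismall decomposition theorem for the constant Hodge module, strictness of the Hodge filtration under projective pushforward to extract $\R\pi_*\Omega^p_{\Hilb^nS}$, identification of $L_{(n)}$ with $L^{\otimes k}$ over the clusters of size $k$, and K\"unneth plus $\mathfrak{S}_n$-invariants to produce the symmetric power. The bookkeeping issues you flag are exactly the ones the paper resolves, via the irreducibility of the Brian\c{c}on varieties (forcing trivial local systems of Tate type), the finite birational normalization $\prod_k S^{(a_k)}\to\overline{Y_\lambda}$ of the (generally non-normal) stratum closures, and the identification of $\gr^F\DR$ of the IC Hodge module on a V-manifold with reflexive differentials.
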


Taking the dimensions in \eqref{eqn:main-cohomology}, we get the generating series for the twisted Hodge numbers of $(\Hilb^nS, L_n)$:

\begin{theorem}
	\label{thm:main-numbers}
	Notation is as in \Cref{thm:main-cohomology}.
	We have the following equality of generating series in three variables:
	\begin{equation}
	\label{eqn:main-numbers}
	\sum_{n\geq 0}\sum_{p,q=0}^{2n}h^{p,q}(\Hilb^{n}{S},{L}_n)x^py^qt^n
	=
	\prod_{k\ge 1}\prod_{p, q=0}^2\left( 1-(-1)^{p+q}x^{p+k-1}y^{q+k-1}t^k\right)^{-(-1)^{p+q}h^{p,q}(S,{L}^{\otimes k})}.
	\end{equation}
\end{theorem}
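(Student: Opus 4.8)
The plan is to deduce \Cref{thm:main-numbers} directly from the cohomological statement \Cref{thm:main-cohomology} by passing to tri-graded dimensions (Hilbert--Poincar\'e series) on both sides of the isomorphism \eqref{eqn:main-cohomology}. Since \Cref{thm:main-cohomology} already carries all the geometric content, the only work left is a formal computation of the graded dimension of a symmetric power of a super vector space, entirely parallel to the way \eqref{eqn:HodgeNumbers} is extracted from \eqref{eqn:Hilb-HodgeStructure} in the untwisted case.

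First I would record the left-hand side: taking the dimension of each tri-graded piece $H^{p,q}(\Hilb^nS, L_n)$ in $\bigoplus_n H^{\#,\star}(\Hilb^nS,L_n)t^n$ reproduces verbatim the left-hand side $\sum_n\sum_{p,q} h^{p,q}(\Hilb^nS,L_n)x^py^qt^n$ of \eqref{eqn:main-numbers}.

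The substance is the right-hand side. I would invoke the standard fact that for a tri-graded super vector space with a homogeneous basis, each basis vector carrying a monomial $m$ in the grading variables, the tri-graded dimension of $\Sym^\bullet$ equals $\prod_{\text{even }m}(1-m)^{-1}\cdot\prod_{\text{odd }m}(1+m)$: indeed $\Sym^\bullet$ of an even line is a polynomial algebra, contributing the geometric series $\sum_{j\geq 0}m^j=(1-m)^{-1}$, whereas $\Sym^\bullet$ of an odd line is an exterior algebra, contributing only $1+m$. Applying this to the input space $\bigoplus_{k\geq 1}\bigoplus_{p,q=0}^2 H^{p,q}(S,L^{\otimes k})x^{p+k-1}y^{q+k-1}t^k$, for each triple $(k,p,q)$ there are $h^{p,q}(S,L^{\otimes k})$ basis vectors, each with monomial $x^{p+k-1}y^{q+k-1}t^k$. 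I would observe that their parity, namely the parity of the total $x,y$-degree, equals the parity of $p+q$, since the shift by $2k-2$ is even. Packaging the even and odd cases into the single exponent $-(-1)^{p+q}$ then yields $\prod_{k\geq 1}\prod_{p,q=0}^2\bigl(1-(-1)^{p+q}x^{p+k-1}y^{q+k-1}t^k\bigr)^{-(-1)^{p+q}h^{p,q}(S,L^{\otimes k})}$, which is exactly the right-hand side of \eqref{eqn:main-numbers}.

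Since no geometry intervenes at this stage, there is no genuine obstacle; the only point demanding care is the bookkeeping of the super sign convention---checking that the parity is governed by $p+q$ alone, not by the $t$-degree, and that it is preserved under the grading shift $(p,q)\mapsto(p+k-1,q+k-1)$, so that the even factors correctly produce $(1-m)^{-1}$ and the odd factors $(1+m)$.
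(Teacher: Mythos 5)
Your proposal is correct and follows the same route as the paper: the paper's proof of \Cref{thm:main-numbers} is a one-line deduction from \Cref{thm:main-cohomology} by citing \cite[Lemma 3.3]{BFK}, which encapsulates exactly the formal computation you spell out, namely that the tri-graded dimension of a super symmetric power is the product of $(1-m)^{-1}$ over even lines and $(1+m)$ over odd lines, with the parity governed by $p+q$ since the shift by $2k-2$ is even. Your sign bookkeeping and the packaging into the exponent $-(-1)^{p+q}h^{p,q}(S,L^{\otimes k})$ are accurate.
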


\begin{rmk}[Boissi\`ere's conjecture]
	The generating series \eqref{eqn:main-numbers} for the twisted Hodge numbers of the pair $(\Hilb^n(S), L_n)$, when $S$ is projective, was conjectured in Belmans--Fu--Krug \cite[Conjecture E]{BFK} as an amendment\footnote{Counter-examples to the original generating series of Boissi\`ere were found in \cite[Appendix B]{Hayashi}, and in \cite[Example 5.7]{BFK}. The only difference with respect to Boissi\`ere's formula is that we added an exponent $k$ to $L$ on the right-hand side.} of Boissière's conjecture \cite[Conjecture~1]{MR2932167}. The initial clue for us in \cite{BFK} was the computation of the Hochschild homology with coefficients as mentioned above in \eqref{eqn:HHCoeff}, together with G\"ottsche--Soergel's formula \eqref{eqn:HodgeNumbers}.
\end{rmk}

We will prove the following analogous result for nested Hilbert schemes $\Hilb^{n, n+1}S$; see \Cref{sec:NestedHilb} for the definitions and notations.

\begin{theorem}
	\label{thm:main-nested-cohomology}
	Let $S$ be a smooth compact complex surface. Let $L, L'$ be two holomorphic line bundles on $S$. We have a canonical isomorphism of tri-graded vector spaces:
	\begin{align}
	\label{eqn:main-nested-cohomology}
	\begin{split}
	\bigoplus_{n\geq 0}\bigoplus_{p,q\geq 0} H^{p,q}(\Hilb^{n, n+1}S, \phi^*L_n\otimes \rho^*L')x^py^qt^n\cong
	&\left(\bigoplus_{n\geq 0}\bigoplus_{p,q\geq 0} H^{p,q}(\Hilb^nS, L_n)x^py^qt^n\right)\\
	&\otimes \left(\bigoplus_{j\geq 0}\bigoplus_{p,q= 0}^2H^{p,q}(S, L^{\otimes j}\otimes L') x^{p+j}y^{q+j}t^j\right),
	\end{split}
	\end{align}
	where $\phi\colon \Hilb^{n,n+1}S\to \Hilb^nS$, $\rho\colon \Hilb^{n,n+1}S\to S$ are the natural morphisms. 
	
	More succinctly, 
	\begin{equation}
	\bigoplus_{n\geq 0} H^{\#,\star}(\Hilb^{n, n+1}S,\phi^* L_n\otimes \rho^*L')t^n\cong    \left(\bigoplus_{n\geq 0} H^{\#,\star}(\Hilb^nS, L_n)t^n\right)\otimes  \left(\bigoplus_{j\geq 0}H^{\#,\star}(S, L^{\otimes j}\otimes L')[-j, -j]t^j\right).
	\end{equation}
	
	Taking dimensions, we get the following generating series:
	\begin{equation}
	\label{eqn:main-HodgeNumbers-nested}
	\begin{split}
	&\sum_{n\geq 0}\sum_{p,q\geq 0} h^{p,q}(\Hilb^{n, n+1}S, \phi^*L_n\otimes \rho^*L')x^py^qt^n\\
	&= \left( \prod_{k\ge 1}\prod_{p, q=0}^2\left( 1-(-1)^{p+q}x^{p+k-1}y^{q+k-1}t^k\right)^{-(-1)^{p+q}h^{p,q}(S,{L}^{\otimes k})}\right)\cdot \left(\sum_{j\geq 0}\sum_{p,q= 0}^2h^{p,q}(S, L^{\otimes j}\otimes L')x^{p+j}y^{q+j}t^j\right).
	\end{split}
	\end{equation}
\end{theorem}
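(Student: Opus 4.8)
The plan is to parallel the proof of \Cref{thm:main-cohomology}, exploiting that the twisting line bundle on $\Hilb^{n,n+1}S$ is pulled back from a product. Writing $\sigma:=(\phi,\rho)\colon \Hilb^{n,n+1}S\to \Hilb^nS\times S$, we have $\phi^*L_n\otimes\rho^*L'=\sigma^*(L_n\boxtimes L')$. The morphism $\sigma$ is proper and birational: it is an isomorphism over the locus where the residual point is disjoint from the length-$n$ subscheme, and over the universal subscheme $Z_n\subset\Hilb^nS\times S$ (which has codimension $2$) its fibres are isomorphic to $\PP^1$; in fact $\sigma$ realizes $\Hilb^{n,n+1}S$ as the blow-up of $\Hilb^nS\times S$ along $Z_n$, so in particular $\sigma$ is semismall. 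First I would record this geometry together with the projection formula $\R\sigma_*\sigma^*(-)\cong(-)\otimes\R\sigma_*\scrO$, which reduces the twisted Hodge groups upstairs to a Hodge-module pushforward along $\sigma$.

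Next I would apply the decomposition theorem in the category of Hodge modules to $\R\sigma_*$ of the (twisted) constant Hodge module, exactly as in the proof of the main theorem. Because $\sigma$ is semismall with $\PP^1$-fibres over $Z_n$, the pushforward splits as the intersection complex of the smooth space $\Hilb^nS\times S$ together with one summand supported on $Z_n$ carrying a single Tate twist coming from the class of the fibre $\PP^1$. Passing to the associated graded of the Hodge filtration of the de Rham complex, i.e.\ from the Hodge-module statement to $\Omega^\bullet$ with values in the line bundle, then yields a splitting of the form
\[
H^{\#,\star}(\Hilb^{n,n+1}S,\sigma^*(L_n\boxtimes L'))\cong \big(H^{\#,\star}(\Hilb^nS,L_n)\otimes H^{\#,\star}(S,L')\big)\ \oplus\ (\text{a correction supported on }Z_n),
\]
where the first summand, obtained by K\"unneth on $\Hilb^nS\times S$, is precisely the $j=0$ part of the claimed right-hand side of \eqref{eqn:main-nested-cohomology}. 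Here one must use the Hodge-module-theoretic (Du Bois) incarnation of the correction term rather than the naive sheaves $\Omega^p_{Z_n}$, since $Z_n$ is singular.

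The higher terms $j\ge1$ must then come from the correction supported on the universal subscheme. The cleanest way to produce all of them at once is to compose with the Hilbert--Chow morphism and work instead with $\overline\pi:=(\pi\times\id)\circ\sigma\colon \Hilb^{n,n+1}S\to S^{(n)}\times S$, applying the same master Hodge-module decomposition that underlies \Cref{thm:main-cohomology}. Its natural stratification is indexed by a partition $\mu\vdash n$ recording the $n$-cluster, together with the length $j\ge0$ of the punctual sub-cluster that the extra point merges into; the stratum with parameter $j$ contributes a local factor $H^{\#,\star}(S,L^{\otimes j}\otimes L')[-j,-j]$, while the partition of the remaining $m=n-j$ points reproduces, via \Cref{thm:main-cohomology}, a copy of $H^{\#,\star}(\Hilb^mS,L_m)$. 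Summing over $n$ and assembling the generating series then gives the tensor product of the ordinary-Hilbert-scheme series with the extra surface factor $\bigoplus_{j\ge0}H^{\#,\star}(S,L^{\otimes j}\otimes L')[-j,-j]t^j$, which is the asserted formula.

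The main obstacle will be the merging stratum, i.e.\ controlling the singular center $Z_n$ in the decomposition. Concretely, one has to show that when the extra point collides with a punctual cluster of length $j$ the local twisted Hodge-module contribution is exactly $L^{\otimes j}\otimes L'$ placed in Tate-degree $j$ (the shift $[-j,-j]$); equivalently, that the correction term above is governed by the nested structure at level $(n-1,n)$, leading to a recursion whose solution is the extra surface factor. This local computation -- matching the Nakajima degree-one creation operator with twisted coefficients -- is the heart of the argument, the $\Hilb^mS$- and $S$-level factors being then supplied by \Cref{thm:main-cohomology} and K\"unneth.
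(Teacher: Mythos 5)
Your final route---composing with the Hilbert--Chow morphism to get a semismall map $\Hilb^{n,n+1}S\to S^{(n)}\times S$, stratifying by the multiplicity $j$ of the residual point in the cycle, and reading off one factor $H^{\#,\star}(S,L^{\otimes j}\otimes L')[-j,-j]$ per stratum while the remaining partition of $n-j$ reassembles into the $\Hilb^{n-j}S$ series---is exactly the paper's proof, and the ``heart'' you flag at the merging stratum is settled there precisely as you anticipate: Cheah's irreducibility of the punctual nested Hilbert scheme $\Hilb^{j,j+1}(\CC^2)_0$ forces the local systems to be trivial Tate objects, and a pullback computation on the normalized strata (\Cref{lemma:NormalizationAndPullback-nested}) produces the factor $L^{\otimes j}\otimes L'$. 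Your opening detour through the blow-up $\sigma\colon\Hilb^{n,n+1}S\to\Hilb^nS\times S$ is correctly abandoned (the correction term $\ICH_{Z_n}$ lives on a singular, non-V-manifold center and is not directly computable), and the paper does not use it.
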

In the above statement, we used twisted Hodge groups of $\Hilb^nS$  with values in $L_n$ as building blocks on the right-hand side. Combining with \Cref{thm:main-cohomology}, one can express everything in terms of twisted Hodge groups of $S$ with values in powers of $L$.

\subsection{Application I: deformation theory of Hilbert schemes}
One of our motivations to study the \textit{twisted} Hodge groups is to understand the deformation theory of 
$\Hilb^nS$. Indeed, the relevant cohomology groups 
\begin{equation}
H^q(\Hilb^nS, T_{\Hilb^nS})\cong H^{2n-1, q}(\Hilb^nS, \omega_n^\vee)
\end{equation}
are twisted Hodge groups, and taking $L= \omega_S^\vee=\wedge^2T_S$ in \Cref{thm:main-cohomology} allows us to compute them and thus provides information on the formal deformation theory of $\Hilb^nS$, for any compact complex surface $S$. 

More precisely, we have the following result, whose proof is given in \Cref{sec:Deformation}, together with some examples.

\begin{theorem}[Deformation theory of $\Hilb^nS$]
	\label{thm:main-deformation}
	Let $S$ be a compact complex surface. For any $q\in \mathbb{N}$, we have a canonical isomorphism 
	\begin{align}
	\label{eqn:main-Cohomology-Tangent}
	\begin{split}
	H^q(\Hilb^nS, T_{\Hilb^nS})\cong  H^q(S^n, T_{S^n})^{\mathfrak{S}_n}
	& \oplus H^{q-1}(S^{(n-2)}, \mathscr{O})\otimes H^0(S, \wedge^2T_S)\\
	& \oplus H^{q-2}(S^{(n-2)}, \mathscr{O})\otimes H^1(S,  \wedge^2T_S)\\
	& \oplus H^{q-3}(S^{(n-2)}, \mathscr{O})\otimes H^2(S,   \wedge^2T_S).
	\end{split}
	\end{align}
	In particular, if $S$ is connected and $n\geq 2$, we have canonical isomorphisms:
	\begin{align}
	\label{eqn:H^0T}     H^0(\Hilb^nS, T_{\Hilb^nS})\cong& H^0(S, T_S);\\
	\label{eqn:H^1T}       H^1(\Hilb^nS, T_{\Hilb^nS})\cong &  
	H^1(S, T_S)\;\oplus\; H^0(S, T_S)\otimes H^1(S, \mathscr{O}_S)\;\oplus\;  H^0(S,  \wedge^2T_S);\\
	\label{eqn:H^2T}         \begin{split}
	H^2(\Hilb^nS, T_{\Hilb^nS})\cong   &
	H^2(S, T_S)\;\oplus\; H^1(S, T_S)\otimes H^1(S, \mathscr{O}_S) \\
	&\oplus\; H^0(S, T_S)\otimes H^2(S, \mathscr{O}_S)\;\oplus \; H^0(S, T_S)\otimes \wedge^2H^1(S, \mathscr{O}_S)\\
	&\oplus\; H^1(S , \mathscr{O}_S)\otimes H^0(S, \wedge^2T_S)\;\oplus\;  H^1(S, \wedge^2T_S).
	\end{split}
	\end{align}
\end{theorem}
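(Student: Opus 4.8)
The plan is to deduce the tangent cohomology from a single multigraded component of \Cref{thm:main-cohomology}. On the $2n$-dimensional manifold $X=\Hilb^nS$, the contraction pairing $\Omega^1_X\otimes\Omega^{2n-1}_X\to\omega_X$ yields a canonical isomorphism $T_X\cong\Omega^{2n-1}_X\otimes\omega_X^\vee$, so that
\[
H^q(\Hilb^nS,T_{\Hilb^nS})\cong H^{2n-1,q}(\Hilb^nS,\omega_n^\vee).
\]
Since $(\omega_S^{\otimes k})_n\cong\omega_{\Hilb^nS}^{\otimes k}$ for every $k\in\ZZ$, we have $\omega_n^\vee\cong(\omega_S^\vee)_n$. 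I would therefore apply \Cref{thm:main-cohomology} with $L=\omega_S^\vee=\wedge^2T_S$ and read off the coefficient of $x^{2n-1}y^qt^n$ on both sides of \eqref{eqn:main-cohomology}.

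First I would carry out the combinatorial extraction on the right-hand side. A monomial in $\Sym^\bullet$ built from $m$ homogeneous generators with parameters $(k_i,p_i,q_i)$, $1\le i\le m$, has $t$-degree $\sum k_i$, $x$-degree $\sum(p_i+k_i-1)$ and $y$-degree $\sum(q_i+k_i-1)$. Imposing $t$-degree $n$ and $x$-degree $2n-1$ forces $\sum k_i=n$ and $\sum p_i=n+m-1$; together with $k_i\ge1$ and $0\le p_i\le2$ this pins down $m\in\{n-1,n\}$, because $2n-1$ lies one below the maximal attainable $x$-degree $2n$. The crux is then to enumerate these two cases. For $m=n$, every $k_i=1$, exactly one $p_i$ equals $1$ and the remaining $n-1$ equal $2$; using $\Omega_S^2\otimes\omega_S^\vee\cong\mathscr{O}_S$ and $\Omega_S^1\otimes\omega_S^\vee\cong T_S$ on the surface, the distinguished $p=1$ factor contributes $H^\bullet(S,T_S)$ and each $p=2$ factor contributes $H^\bullet(S,\mathscr{O}_S)$; as these two types carry distinct $x$-degrees they are separated in $\Sym^\bullet$, so this case yields the degree-$q$ part of $H^\bullet(S,T_S)\otimes\Sym^{n-1}H^\bullet(S,\mathscr{O}_S)$. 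For $m=n-1$, one $k_i$ equals $2$, the rest equal $1$, and all $p_i=2$; using $\Omega_S^2\otimes\omega_S^{\vee\otimes2}\cong\omega_S^\vee=\wedge^2T_S$, the $k=2$ factor contributes $H^\bullet(S,\wedge^2T_S)$ with a $y$-shift by $k-1=1$, while the $n-2$ remaining factors give $\Sym^{n-2}H^\bullet(S,\mathscr{O}_S)$.

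To finish I would match these contributions with the summands of \eqref{eqn:main-Cohomology-Tangent}. Recognizing $\Sym^{n-2}H^\bullet(S,\mathscr{O}_S)\cong H^\bullet(S^{(n-2)},\mathscr{O})$ and tracking the $y$-shift, the case $m=n-1$ produces exactly the three summands $H^{q-1-j}(S^{(n-2)},\mathscr{O})\otimes H^{j}(S,\wedge^2T_S)$ for $j=0,1,2$. The case $m=n$ gives $H^\bullet(S,T_S)\otimes\Sym^{n-1}H^\bullet(S,\mathscr{O}_S)$ in degree $q$, which I would identify with $H^q(S^n,T_{S^n})^{\mathfrak{S}_n}$ by writing $T_{S^n}=\bigoplus_i\pr_i^*T_S$, applying the K\"unneth formula, and computing $\mathfrak{S}_n$-invariants: fixing the distinguished $T_S$-slot and symmetrizing the other $n-1$ copies of $H^\bullet(S,\mathscr{O})$ reproduces precisely $H^\bullet(S,T_S)\otimes\Sym^{n-1}H^\bullet(S,\mathscr{O})$. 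I expect the main obstacle to be bookkeeping rather than conceptual: one must check that the super-sign rule governing $\Sym^\bullet$ is compatible with the signs in the K\"unneth isomorphism and in the invariants (so that, for instance, super-symmetric squares of $H^1(S,\mathscr{O})$ correctly yield $\wedge^2H^1(S,\mathscr{O})$ in degree two), and that the parity shift of the distinguished $T_S$-generator in the super vector space is harmless because that generator is isolated in its multidegree.

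Finally, the explicit formulas \eqref{eqn:H^0T}--\eqref{eqn:H^2T} would follow by specializing $q\in\{0,1,2\}$ in \eqref{eqn:main-Cohomology-Tangent} and inserting, for connected $S$, the values $H^0(S^{(m)},\mathscr{O})=\CC$, $H^1(S^{(m)},\mathscr{O})\cong H^1(S,\mathscr{O})$, and $H^2(S^{(m)},\mathscr{O})\cong H^2(S,\mathscr{O})\oplus\wedge^2H^1(S,\mathscr{O})$, while paying attention to the vanishing of the low-degree symmetric-product groups (and of $H^\bullet(S^{(n-2)},\mathscr{O})$ in negative degree) when $n$ is small, so that the precise list of surviving summands is obtained for the relevant range of $n$.
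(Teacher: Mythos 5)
Your proposal is correct and follows essentially the same route as the paper: identify $H^q(\Hilb^nS,T_{\Hilb^nS})$ with $H^{2n-1,q}(\Hilb^nS,\omega_n^\vee)$, specialize $L=\omega_S^\vee$, and use the fact that the holomorphic degree $2n-1$ is one below maximal to isolate exactly two contributions (your cases $m=n$ and $m=n-1$ are the paper's partitions $\lambda=(1^n)$ and $\lambda=(1^{n-2}2^1)$). The only cosmetic difference is that the paper plugs into the intermediate \Cref{prop:TwistedHodgeGroupsOfHilb}, so the summand $H^q(S^n,T_{S^n})^{\mathfrak{S}_n}$ appears directly as $H^{2n-1,q}(S^{(n)},\omega_{S^{(n)}}^\vee)$ without the K\"unneth reassembly from $H^\bullet(S,T_S)\otimes\Sym^{n-1}H^\bullet(S,\mathscr{O}_S)$ that your extraction from \Cref{thm:main-cohomology} requires.
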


\begin{rmk}
	The relation  \eqref{eqn:H^0T} says that $S$ and $\Hilb^nS$ have the same infinitesimal automorphisms. This was first proved by Boissi\`ere \cite[Corollaire 1]{MR2932167} and rediscovered in \cite[Corollary 5.1]{BFK} by an alternative argument. Our proof here is identical to Boissi\`ere's.
	
	The relation \eqref{eqn:H^1T} describes the tangent space of the deformation space of $\Hilb^nS$. The fact that both sides of  \eqref{eqn:H^1T} have the same dimension was already proved in Belmans--Fu--Krug \cite[Corollary B]{BFK}, when $S$ is projective, using non-commutative methods, but no canonical isomorphism was provided there, due to the use of a cancellation argument. Here we not only construct a canonical isomorphism, but also extend the result to all compact complex surfaces. The isomorphism \eqref{eqn:H^1T} recovers as special cases the results of Fantechi \cite[Theorem 0.1, Theorem 0.3]{Fantechi-Deformation-1995} and Hitchin \cite[\S 4.1]{Hitchin-Deformation-2012}; see \Cref{sec:Deformation} for the statements of their results.
	
	The relation \eqref{eqn:H^2T} computes the obstruction space of the deformation theory of $\Hilb^nS$. 
\end{rmk}

\begin{rmk}[Schouten--Nijenhuis bracket]
	Recall that for a compact complex manifold $X$, we have the Schouten--Nijenhuis bracket \cite{FrolicherNijenhuis, Nijenhuis-JacobiIdentities} 
	\begin{equation}
	[-,-]\colon H^q(X, \wedge^pT_X)\otimes H^{q'}(X, \wedge^{p'}T_X)\to H^{q+q'}(X,\wedge^{p+p'-1}T_X).
	\end{equation}
	A necessary condition for an infinitesimal deformation direction $\xi\in H^1(X, T_X)$ to be unobstructed is that $[\xi, \xi]=0$ in $H^2(X,T_X)$. In the case of Douady space of complex surfaces, we expect that via the decompositions \eqref{eqn:H^1T} and \eqref{eqn:H^2T}, the Schouten--Nijenhuis bracket 
	\begin{equation}
	[-,-]\colon  H^1(\Hilb^nS, T_{\Hilb^nS})\times  H^1(\Hilb^nS, T_{\Hilb^nS}) \to H^2(\Hilb^nS, T_{\Hilb^nS})
	\end{equation}
	is given by the Schouten--Nijenhuis bracket on $S$:
	\begin{equation}
	[-,-]\colon H^i(S, \wedge^jT_S)\times  H^{i'}(S, \wedge^{j'}T_S)\to H^{i+i'}(S, \wedge^{j+j'-1}T_S).
	\end{equation}
	Assuming this, the Kuranishi space $\Def(\Hilb^nS)$, as a germ of analytic space, is determined up to quadratic approximation by \eqref{eqn:H^1T} and \eqref{eqn:H^2T}. We plan to pursue this direction in a follow-up work.
\end{rmk}

\subsection{Application II: extending formulas beyond the K\"ahler setting}

\Cref{thm:main-cohomology} and \Cref{thm:main-numbers} recover various aforementioned results by specializing, and therefore proving them for any compact complex surface $S$, without any algebraicity or K\"ahlerness assumption.

Setting $L=\mathscr{O}_S$ in \eqref{eqn:main-numbers}:
\begin{cor}
	\label{cor:HodgeNumbers}
	G\"ottsche--Soergel's formula \eqref{eqn:HodgeNumbers} for Hodge numbers of Douady spaces holds for any compact complex surface $S$:
	\begin{equation*}
	\label{eqn:HodgeNumbers2}
	\sum_{n\geq 0}\mathbb{E}_{\Hilb^nS}(x,y)t^n:= \sum_{n\geq 0}\sum_{p, q\geq 0} h^{p,q}(\Hilb^nS)x^py^qt^n=\prod_{k\geq 1}\prod_{p,q=0}^2 (1-(-1)^{p+q}x^{p+k-1}y^{q+k-1}t^k)^{-(-1)^{p+q}h^{p,q}(S)}.
	\end{equation*} \end{cor}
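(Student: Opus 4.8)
The plan is to obtain \Cref{cor:HodgeNumbers} as an immediate specialization of \Cref{thm:main-numbers}, taking $L=\mathscr{O}_S$; all of the mathematical content already resides in that theorem, so what remains is only to verify that both sides of \eqref{eqn:main-numbers} collapse onto the ordinary (untwisted) Hodge numbers under this substitution.

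First I would treat the left-hand side. For $L=\mathscr{O}_S$ we have $L_n=(\mathscr{O}_S)_n\cong\mathscr{O}_{\Hilb^nS}$, as recorded just below \eqref{eqn:Ln}. Consequently the twisted Hodge group $H^{p,q}(\Hilb^nS, L_n)=H^q(\Hilb^nS, \Omega^p_{\Hilb^nS}\otimes\mathscr{O}_{\Hilb^nS})$ reduces to the usual Dolbeault cohomology $H^q(\Hilb^nS, \Omega^p_{\Hilb^nS})$, whose dimension is the ordinary Hodge number $h^{p,q}(\Hilb^nS)$. Hence the generating series on the left of \eqref{eqn:main-numbers} becomes exactly $\sum_{n\geq 0}\mathbb{E}_{\Hilb^nS}(x,y)t^n$.

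Next I would treat the right-hand side. Since $\mathscr{O}_S^{\otimes k}\cong\mathscr{O}_S$ for every $k\geq 1$, the exponents simplify as $h^{p,q}(S, L^{\otimes k})=h^{p,q}(S,\mathscr{O}_S)=h^{p,q}(S)$, independently of $k$. The factor $\prod_{p,q=0}^2\left(1-(-1)^{p+q}x^{p+k-1}y^{q+k-1}t^k\right)^{-(-1)^{p+q}h^{p,q}(S)}$ is therefore precisely the $k$-th factor of G\"ottsche--Soergel's product \eqref{eqn:HodgeNumbers}, and taking the product over all $k\geq 1$ recovers that formula verbatim.

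I expect no genuine obstacle here: the corollary is pure bookkeeping once \Cref{thm:main-numbers} is in hand. The only conceptual point worth stressing---and the reason the corollary is stated separately---is that \Cref{thm:main-numbers} was established for an \emph{arbitrary} compact complex surface $S$, with the twisted Hodge numbers interpreted as Dolbeault dimensions rather than as graded pieces of a Hodge structure. The specialization therefore extends G\"ottsche--Soergel's formula, originally proved by Hodge-theoretic (topological) methods in the projective or K\"ahler setting, to all compact complex surfaces, exactly as claimed.
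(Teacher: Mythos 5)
Your proposal is correct and coincides with the paper's own derivation: the corollary is introduced there by the single line ``Setting $L=\mathscr{O}_S$ in \eqref{eqn:main-numbers}'', and your verification that $(\mathscr{O}_S)_n\cong\mathscr{O}_{\Hilb^nS}$ and $h^{p,q}(S,\mathscr{O}_S^{\otimes k})=h^{p,q}(S)$ is exactly the bookkeeping that specialization requires. Nothing is missing.
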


\begin{cor}
	\label{cor:Frolicher}
	For any compact complex surface $S$ and any positive integer $n$, the Fr\"olicher spectral sequence for the Douady space $\Hilb^nS$
	\begin{equation}
	E_1^{p,q}=H^q(\Hilb^nS, \Omega_{\Hilb^nS}^p) \Rightarrow H^{p+q}(X, \CC)
	\end{equation}
	degenerates at $E_1$-page.
\end{cor}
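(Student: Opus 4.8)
The plan is to deduce the degeneration from a purely numerical comparison. For any compact complex manifold $X$ the Fr\"olicher spectral sequence furnishes the inequality $\sum_i b_i(X)\le \sum_{p,q}h^{p,q}(X)$, since dimensions can only drop from the $E_1$-page to the $E_\infty$-page; moreover, because every term of this inequality is in the same direction, the total equality $\sum_i b_i(X)= \sum_{p,q}h^{p,q}(X)$ forces all higher differentials to vanish, i.e.\ it is equivalent to degeneration at $E_1$. Hence it suffices to prove, for every $n$, that the total Betti number and the total Hodge number of $\Hilb^nS$ agree:
\[
\sum_{i\ge 0}b_i(\Hilb^nS)=\sum_{p,q\ge 0}h^{p,q}(\Hilb^nS).
\]
I will extract both sides from generating series already at our disposal and match them termwise.

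The single input concerning $S$ itself is the classical degeneration of the Fr\"olicher spectral sequence for compact complex surfaces, which holds with no K\"ahlerness hypothesis; it gives $b_i(S)=\sum_{p+q=i}h^{p,q}(S)$ for all $i$. With this in hand I would set $x=y=1$ in the Hodge-number formula of \Cref{cor:HodgeNumbers} and $x=1$ in G\"ottsche's Betti-number formula \eqref{eqn:BettiNumbers}, obtaining
\[
\sum_{n\ge 0}\Bigl(\sum_{p,q}h^{p,q}(\Hilb^nS)\Bigr)t^n=\prod_{k\ge 1}\prod_{p,q=0}^{2}\bigl(1-(-1)^{p+q}t^k\bigr)^{-(-1)^{p+q}h^{p,q}(S)}
\]
and
\[
\sum_{n\ge 0}\Bigl(\sum_{i}b_i(\Hilb^nS)\Bigr)t^n=\prod_{k\ge 1}\prod_{i\ge 0}\bigl(1-(-1)^{i}t^k\bigr)^{-(-1)^{i}b_i(S)}.
\]

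To see these coincide, for each fixed $k$ I would regroup the inner double product on the Hodge side according to the total degree $i=p+q$, so that the exponent becomes $-(-1)^i\sum_{p+q=i}h^{p,q}(S)$; by the surface-level degeneration this equals $-(-1)^i b_i(S)$, reproducing exactly the $k$-th factor of the Betti side. Thus the two generating series are literally identical, and comparing the coefficients of $t^n$ yields the desired equality of total Betti and total Hodge numbers of $\Hilb^nS$, whence the degeneration.

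I expect no genuine obstacle: the statement is a formal consequence of the Hodge-number computation already established in \Cref{thm:main-numbers} and \Cref{cor:HodgeNumbers} in the non-K\"ahler setting. The only points deserving care are that G\"ottsche's Betti-number formula \eqref{eqn:BettiNumbers} is indeed available for arbitrary compact complex surfaces (it is, the decomposition for the Douady--Barlet morphism being topological in nature) and that the numerical degeneration criterion is applied to $\Hilb^nS$ while its input, the analogous equality $\sum_i b_i=\sum_{p,q}h^{p,q}$, is used on $S$. The genuinely new content has already been spent in computing the twisted Hodge numbers; this corollary merely reorganizes it.
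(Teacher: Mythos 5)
Your proposal is correct and follows essentially the same route as the paper: both compare de Cataldo--Migliorini's Betti-number formula for Douady spaces with the Hodge-number formula of \Cref{cor:HodgeNumbers}, using the classical $E_1$-degeneration for the surface $S$ itself to identify the two infinite products, and then invoke the standard numerical criterion for degeneration. The only (immaterial) difference is that the paper keeps the variable $x=y$ and obtains the equality $\operatorname{b}_i(\Hilb^nS)=\sum_{p+q=i}h^{p,q}(\Hilb^nS)$ degree by degree, whereas you set $x=y=1$ and compare total sums, which suffices equally well.
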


Similarly, setting $L=L'=\mathscr{O}_S$ in \eqref{eqn:main-HodgeNumbers-nested} in \Cref{thm:main-nested-cohomology}, we obtain the following extension of Cheah's formula \cite[P.485]{Cheah-VirtualHodgePolynomial}  for all compact complex surfaces:
\begin{cor}
	\label{cor:HodgeNumbers-Nested}
	For any compact  complex surface $S$, we have the following equality of Hodge polynomials:
	\begin{equation}
	\label{eqn:HodgeNumbers-Nested}
	\sum_{n\geq 0} \mathbb{E}_{\Hilb^{n,n+1}S}(x,y)t^n=\left(\sum_{n\geq 0} \mathbb{E}_{\Hilb^nS}(x,y)t^n\right) \cdot\mathbb{E}_S(x,y)\cdot \frac{1}{1-xyt}.
	\end{equation}
\end{cor}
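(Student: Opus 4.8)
The plan is to deduce \Cref{cor:HodgeNumbers-Nested} as a direct specialization of the generating series \eqref{eqn:main-HodgeNumbers-nested} in \Cref{thm:main-nested-cohomology}, setting $L=L'=\mathscr{O}_S$ and then recognizing each of the three resulting factors. The genuine content is already contained in \Cref{thm:main-nested-cohomology}; what remains is purely formal manipulation at the level of generating series, so I do not anticipate any serious obstacle here.

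First I would analyze the left-hand side. Taking $L=L'=\mathscr{O}_S$, the twisting line bundle becomes $\phi^*(\mathscr{O}_S)_n\otimes\rho^*\mathscr{O}_S$. Since $(\mathscr{O}_S)_n\cong\mathscr{O}_{\Hilb^nS}$ (as recorded after \eqref{eqn:Ln}) and the pullback of a trivial bundle is trivial, this line bundle is $\mathscr{O}_{\Hilb^{n,n+1}S}$. Hence $h^{p,q}(\Hilb^{n,n+1}S,\phi^*L_n\otimes\rho^*L')=h^{p,q}(\Hilb^{n,n+1}S)$, and the left-hand side of \eqref{eqn:main-HodgeNumbers-nested} becomes exactly $\sum_{n\geq 0}\mathbb{E}_{\Hilb^{n,n+1}S}(x,y)t^n$.

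Next I would treat the two factors on the right-hand side. With $L=\mathscr{O}_S$ we have $h^{p,q}(S,L^{\otimes k})=h^{p,q}(S)$, so the first product coincides with the generating series appearing in \Cref{cor:HodgeNumbers}, which I would invoke to rewrite it as $\sum_{n\geq 0}\mathbb{E}_{\Hilb^nS}(x,y)t^n$. For the second factor, the twist $L^{\otimes j}\otimes L'$ is again trivial, so $h^{p,q}(S,L^{\otimes j}\otimes L')=h^{p,q}(S)$ for every $j$, and the sum reads $\sum_{j\geq 0}\sum_{p,q=0}^2 h^{p,q}(S)\,x^{p+j}y^{q+j}t^j$. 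Factoring the degree shift as $x^{p+j}y^{q+j}=x^py^q(xy)^j$ and summing the resulting geometric series $\sum_{j\geq 0}(xyt)^j=\tfrac{1}{1-xyt}$ identifies this factor with $\mathbb{E}_S(x,y)\cdot\tfrac{1}{1-xyt}$.

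Assembling the three pieces yields \eqref{eqn:HodgeNumbers-Nested}. The only point requiring a little care is the separation of the shift $x^{p+j}y^{q+j}$ into the Hodge polynomial $\mathbb{E}_S(x,y)$ times the geometric factor $(xyt)^j$; everything else is immediate. Should one prefer, the same specialization can instead be carried out on the graded-vector-space isomorphism \eqref{eqn:main-nested-cohomology} and the dimensions taken afterwards, which produces the identity in the same form.
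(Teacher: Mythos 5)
Your proposal is correct and coincides with the paper's own derivation: the corollary is obtained precisely by setting $L=L'=\mathscr{O}_S$ in \eqref{eqn:main-HodgeNumbers-nested}, identifying the first factor via \Cref{cor:HodgeNumbers}, and summing the geometric series $\sum_{j\geq 0}(xyt)^j$ in the second factor. Nothing is missing.
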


\begin{rmk}[Hodge numbers in non-K\"ahler situation]
	For a non-K\"ahler compact complex surface $S$, the Douady spaces $\Hilb^nS$  and $\Hilb^{n,n+1}S$ are not K\"ahler for $n>0$. 
	To the best of my knowledge, \Cref{cor:HodgeNumbers} and \Cref{cor:HodgeNumbers-Nested} are the first time that these Hodge numbers are computed in this generality in the literature. Indeed, G\"ottsche--Soergel \cite{GottscheSoergel} and Cheah \cite{Cheah-VirtualHodgePolynomial} needed the K\"ahlerness assumption in their argument to recover Hodge numbers from the Hodge structures on the cohomology of $\Hilb^nS$ and $\Hilb^{n,n+1}S$. (Nevertheless, the G\"ottsche formula \eqref{eqn:BettiNumbers} for Betti numbers is established in the general complex analytic setting by de Cataldo--Migliorini \cite[Theorem 5.2.1]{deCataldoMigliorini-DouadySpace-2000}.) 
	
	Here is an example: for an Inoue or a Hopf surface $S$ (see \cite[Chapter V, \S\S 18, 19]{BHPV-SurfaceBook}), its Hodge polynomial is $1+y+x^2y+x^2y^2$ (not symmetric in $x$ and $y$), and \eqref{eqn:HodgeNumbers2} gives the Hodge polynomial
	\begin{equation}
	\label{eqn:HodgeNumbers-InoueHopf}
	\sum_{n\geq 0}\sum_{p, q\geq 0} h^{p,q}(\Hilb^nS)x^py^qt^n=\prod_{k\geq 1}\frac{(1+x^{k-1}y^kt^k)(1+x^{k+1}y^kt^k)}{(1-x^{k-1}y^{k-1}t^k)(1-x^{k+1}y^{k+1}t^k)},
	\end{equation}
	again, not symmetric in $x$ and $y$. Below are the Hodge diamonds\footnote{These Hodge diamonds are produced using Pieter Belmans' \textit{Hodge diamond cutter} package \cite{hodge-diamond-cutter}.} of $S$, $\Hilb^2S$ and $\Hilb^3S$:
	\[
	\begin{smallmatrix}
	&& 1 &&\\
	& \color{blue} 1 && \color{blue} 0 &\\
	0 && 0 && 0\\
	& {\color{blue} 0} && \color{blue} 1 &\\
	&& 1 &&
	\end{smallmatrix} \quad\quad
	\begin{smallmatrix}
	&&&& 1 &&&&\\
	&&&  \color{blue}1 &&  \color{blue}0 &&&\\
	&& 0 && 1 && 0&&\\
	& 0 && 1 && 1 && 0&\\
	0&& 0 && 2 && 0 && 0\\
	& 0 && 1 && 1 && 0&\\
	&& 0 && 1 && 0&&\\
	&&& \color{blue} 0 && \color{blue} 1 &&&\\
	&&&& 1 &&&&
	\end{smallmatrix}
	\quad\quad
	\begin{smallmatrix}
	&&&&&& 1 &&&&&&\\
	&&&&& \color{blue} 1 && \color{blue} 0 &&&&&\\
	&&&& 0 && 1 && 0&&&&\\
	&&& 0 && \color{blue}2 && \color{blue}1 && 0&&&\\
	&&0&& \color{blue}1 && 3 && \color{blue}0 && 0&&\\
	&0 &&0&&2&&2&&0&&0&\\
	0&&0&&0&&4&&0&&0&&0\\
	&0 &&0&&2&&2&&0&&0&\\
	&&0&& \color{blue}0 && 3 && \color{blue}1 && 0&&\\
	&&& 0 && \color{blue}1 && \color{blue}2 && 0&&&\\
	&&&& 0 && 1 && 0&&&&\\
	&&&&& \color{blue} 0 && \color{blue} 1 &&&&&\\
	&&&&&& 1 &&&&&&
	\end{smallmatrix}
	\]
	For a secondary Kodaira surface, the Hodge polynomials of its Douady spaces are given by  \eqref{eqn:HodgeNumbers-InoueHopf} with $x$ and $y$ switched, hence their Hodge diamonds are obtained from the above ones by reflecting with respective to the middle vertical line. For more examples of Hodge diamonds of Douady spaces of non-K\"ahler surfaces, we recommend the package \cite{hodge-diamond-cutter} which implements \Cref{thm:main-numbers} and \Cref{cor:HodgeNumbers}.
	
	On a different note, we mention that the G\"ottsche--Soergel formula of Hodge numbers can fail in positive characteristics, as is shown by Srivastava \cite{Srivastava} for Hilbert schemes of supersingular Enriques surfaces in characteristic 2.
\end{rmk}

\medskip
Setting $y=-1$ and renaming $x$ by $-y$ in \eqref{eqn:main-numbers}, we get:
\begin{cor}
	G\"ottsche's formula \eqref{eqn:ChiyGenusCoeff} for the refined $\chi_y$-genera of Douady spaces holds for any compact complex surface $S$:
	\begin{equation}
	\label{eqn:ChiyGenusCoeff2}
	\sum_{n\geq 0} \chi_{-y}(\Hilb^nS, L_n)t^n= \prod_{k\geq 1}\prod_{p\geq 0} \left(1-y^{p+k-1}t^k\right)^{-(-1)^p\chi(S,\Omega_S^p\otimes L^{\otimes k})}=\exp\left(\sum_{m\geq 1}\frac{t^m}{m}\sum_{k\geq 1}(ty)^{(k-1)m}\chi_{-y^m}(S, L^{\otimes k})\right).
	\end{equation}
\end{cor}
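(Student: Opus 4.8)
The plan is to deduce the statement directly from \Cref{thm:main-numbers} by a formal specialization of variables, followed by an elementary logarithmic manipulation of the resulting infinite product; no further geometric input is required, since everything of substance is already contained in \eqref{eqn:main-numbers}. The starting observation is that for a compact complex manifold $X$ with a holomorphic line bundle $\mathcal{L}$ the refined $\chi_y$-genus with coefficients unravels as $\chi_{-y}(X,\mathcal{L})=\sum_{p}\chi(X,\Omega_X^p\otimes\mathcal{L})(-y)^p=\sum_{p,q}(-1)^{p+q}h^{p,q}(X,\mathcal{L})y^p$. Comparing this with the left-hand side of \eqref{eqn:main-numbers}, one sees that $\sum_{n\geq 0}\chi_{-y}(\Hilb^nS,L_n)t^n$ is obtained from that side precisely by setting $y=-1$ and then substituting $x=-y$; this is why that substitution recovers the $\chi_{-y}$-generating series.

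First I would carry out the same substitution on the right-hand side. Setting $y=-1$ sends the base factor $1-(-1)^{p+q}x^{p+k-1}y^{q+k-1}t^k$ to $1-(-1)^{p+k-1}x^{p+k-1}t^k$, using $(-1)^{p+q}(-1)^{q+k-1}=(-1)^{p+k-1}$; the variable $q$ now enters only through the exponent $-(-1)^{p+q}h^{p,q}(S,L^{\otimes k})$. Substituting $x=-y$ then cancels the remaining sign, since $(-1)^{p+k-1}x^{p+k-1}=(-1)^{2(p+k-1)}y^{p+k-1}=y^{p+k-1}$, so the base collapses to $1-y^{p+k-1}t^k$. Because the base no longer depends on $q$, I would next merge the three factors $q=0,1,2$ by adding their exponents, using $\sum_{q=0}^{2}(-1)^{p+q}h^{p,q}(S,L^{\otimes k})=(-1)^p\chi(S,\Omega_S^p\otimes L^{\otimes k})$. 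The product then becomes $\prod_{k\geq 1}\prod_{p=0}^{2}\bigl(1-y^{p+k-1}t^k\bigr)^{-(-1)^p\chi(S,\Omega_S^p\otimes L^{\otimes k})}$, which is the first expression in \eqref{eqn:ChiyGenusCoeff2} (the product over all $p\geq 0$ truncates at $p=2$ since $\Omega_S^p=0$ for $p>2$ on a surface). This yields the first equality.

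For the second equality I would pass to logarithms and expand $\log(1-u)=-\sum_{m\geq 1}u^m/m$ with $u=y^{p+k-1}t^k$, then interchange the order of summation to group terms by $m$. The inner sum over $p$ reassembles as $y^{(k-1)m}\sum_{p}(-1)^p\chi(S,\Omega_S^p\otimes L^{\otimes k})y^{pm}=y^{(k-1)m}\chi_{-y^m}(S,L^{\otimes k})$, and rewriting $t^{km}y^{(k-1)m}=t^m(ty)^{(k-1)m}$ gives the logarithm of the product as $\sum_{m\geq 1}\frac{t^m}{m}\sum_{k\geq 1}(ty)^{(k-1)m}\chi_{-y^m}(S,L^{\otimes k})$; exponentiating produces the claimed closed form. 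The only delicate point in the whole argument is the bookkeeping of the many $\pm1$ factors produced by the double substitution $y=-1,\ x=-y$, together with a routine check that the rearrangements of the formal power series are legitimate degree by degree; there is no genuine obstacle beyond \Cref{thm:main-numbers} itself.
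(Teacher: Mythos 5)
Your proposal is correct and follows exactly the paper's route: the paper obtains the first equality by the same specialization $y=-1$, $x\mapsto -y$ in \eqref{eqn:main-numbers}, and treats the second equality as the purely elementary logarithmic rearrangement (deferred there to \cite[Remark 5.12]{BFK}) that you carry out explicitly. Your sign bookkeeping and the identification $\sum_{q}(-1)^{p+q}h^{p,q}(S,L^{\otimes k})=(-1)^p\chi(S,\Omega_S^p\otimes L^{\otimes k})$ both check out.
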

\begin{rmk}
	\begin{enumerate}[label=\emph{\roman*})]
		\item The second equality in \eqref{eqn:ChiyGenusCoeff2} is purely elementary; see \cite[Remark 5.12]{BFK}.
		\item Our proof of \eqref{eqn:ChiyGenusCoeff2} does not use the Riemann--Roch theorem. To recover G\"ottsche's original formula \cite[Corollary 1.2]{Gottsche-RefinedVerlinde-2020}, one only needs to apply the Riemann--Roch formula for the surface $S$ (but not for $\Hilb^nS$).
		\item Of course G\"ottsche--Soergel's formula \eqref{eqn:ChiyGenera} for $\chi_y$-genera  is also recovered and extended to the non-K\"ahler setting, but this was already proved by Ellingsrud--G\"ottsche--Lehn \cite[Theorem 1.3]{EllingsrudGottscheLehn} since their argument of cobordism works without the projectivity (or K\"ahlerness) assumption.
	\end{enumerate}
\end{rmk}

\medskip
Setting  $x=y^{-1}$ in \eqref{eqn:main-cohomology} and using the Hochschild--Kostant--Rosenberg isomorphism, we obtain the following consequence. The proof is given at the end of \Cref{sec:Proof}.
\begin{cor}
	\label{cor:BFK-HH}
	Belmans--Fu--Krug's formula \eqref{eqn:HHCoeff} for Hochschild homology with coefficients holds for any compact complex surface $S$.
	In particular, the same is true for the formulas for Hochschild homology \eqref{eqn:HH}, Hochschild cohomology  \eqref{eqn:HH*}, and Hochschild--Serre cohomology \eqref{eqn:HS}.
\end{cor}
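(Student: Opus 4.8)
The plan is to obtain \Cref{cor:BFK-HH} from \Cref{thm:main-cohomology} by collapsing the bigrading $(p,q)$ onto the single antidiagonal grading $q-p$ that governs Hochschild homology. The bridge is the twisted Hochschild--Kostant--Rosenberg isomorphism: for a smooth compact complex manifold $X$ and a holomorphic line bundle $\mathcal{L}$ there is a canonical isomorphism of graded vector spaces
\begin{equation}
HH_j(X,\mathcal{L})\cong\bigoplus_{q-p=j}H^q(X,\Omega_X^p\otimes\mathcal{L}).
\end{equation}
In terms of generating series this is the substitution $x=y^{-1}$, since $x^py^q\mapsto y^{q-p}$. Applied to the left-hand side of \eqref{eqn:main-cohomology} with $X=\Hilb^nS$ and $\mathcal{L}=L_n$, it turns $\bigoplus_{p,q}H^{p,q}(\Hilb^nS,L_n)$ into $\bigoplus_j HH_j(\Hilb^nS,L_n)y^j$; applied to the building block $H^{p,q}(S,L^{\otimes k})x^{p+k-1}y^{q+k-1}$ on the right it gives $H^{p,q}(S,L^{\otimes k})y^{q-p}$, because the diagonal shift $[1-k,1-k]$ preserves the difference $q-p$, so the $k$-dependence of the exponents cancels. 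This cancellation is precisely the mechanism converting the shifted twisted-Hodge formula into the unshifted formula \eqref{eqn:HHCoeff}.

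To make this rigorous I would realize the substitution as an exact symmetric monoidal functor $\Phi$ from tri-graded super vector spaces to bi-graded ones, induced by the homomorphism $(p,q)\mapsto q-p$ of grading groups. The key compatibility is with the super structure of \Cref{thm:main-cohomology}: there the parity of a homogeneous class is the parity of its total $(x,y)$-degree, which on a class of bidegree $(p,q)$ equals the parity of $q-p$, i.e.\ the standard parity of the Hochschild degree. Hence $\Phi$ preserves the Koszul signs and commutes with $\Sym^\bullet$. Applying $\Phi$ to the canonical isomorphism \eqref{eqn:main-cohomology} and invoking HKR on both sides then produces the canonical isomorphism \eqref{eqn:HHCoeff}, now for every compact complex surface.

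The remaining formulas follow formally, exactly as in \cite{BFK}. Taking $L=\mathscr{O}_S$ in \eqref{eqn:HHCoeff} and using $(\mathscr{O}_S)_n\cong\mathscr{O}_{\Hilb^nS}$ recovers \eqref{eqn:HH}. For Hochschild cohomology \eqref{eqn:HH*} and Hochschild--Serre cohomology \eqref{eqn:HS} I would use the HKR description of $HS^\bullet_k(X)$ from \cite{BFK} as $\bigoplus_{p+q=\bullet}H^q(X,\wedge^pT_X\otimes\omega_X^{\otimes k})$; rewriting $\wedge^pT_X\cong\Omega_X^{\dim X-p}\otimes\omega_X^{-1}$ presents these as twisted Hodge groups of $X$ with coefficients in $\omega_X^{\otimes(k-1)}$. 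For $X=\Hilb^nS$ one has $\omega_{\Hilb^nS}^{\otimes m}\cong(\omega_S^{\otimes m})_n$, so these are computed by \Cref{thm:main-cohomology} with $L=\omega_S^{\otimes(k-1)}$, and the resulting generating series reorganize into \eqref{eqn:HS} and \eqref{eqn:HH*} by the same purely formal manipulation performed in \cite{BFK}.

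The only ingredient that is not formal, and the one point to check, is that the twisted HKR isomorphism is available for the non-K\"ahler manifolds $\Hilb^nS$. This is not a genuine obstacle: HKR reflects the local structure of the derived diagonal in $X\times X$ and is insensitive to K\"ahlerness, so it holds for any smooth compact complex manifold and its line bundles. What remains is bookkeeping---tracking the antidiagonal grading and the Koszul-sign compatibility of $\Phi$---which I expect to be routine rather than difficult.
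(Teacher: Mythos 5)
Your proposal is correct and follows essentially the same route as the paper: apply the Hochschild--Kostant--Rosenberg isomorphism to both sides, realize the substitution $x=y^{-1}$ on \eqref{eqn:main-cohomology} (noting that the diagonal shift $[1-k,1-k]$ preserves $q-p$), and then specialize $L$ to $\mathscr{O}_S$, $\omega_S^\vee$, and $\omega_S^{\otimes(k-1)}$ for the remaining formulas. The one point you flag---validity of HKR for arbitrary compact complex manifolds---is settled in the paper by citing Schuhmacher \cite{Schuhmacher-2004}.
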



\medskip

\subsection{Method of proof}
In \cite{BFK}, our proof of \eqref{eqn:HHCoeff} is essentially "non-commutative" in the sense that we used the equivalence of derived categories established by Bridgeland--King--Reid \cite{BridgelandKingReid} to reduce the computation of the Hochschild homology of $\Hilb^nS$ with values in $L_n$ to the Hochschild homology of the symmetric quotient stack $[S^n/\mathfrak{S}_n]$ with values in $L^{\boxtimes n}$ (equipped with $\mathfrak{S}_n$-linearization).

However in this paper, our method is more classical: it relies on a study of the Hilbert--Chow morphism (or Douady--Barlet morphism) from the Hilbert scheme to the symmetric power $$\pi\colon \Hilb^nS \to S^{(n)}.$$ The key point is to exploit the "relative Hodge theory" of $\pi$, namely, the Hodge modules naturally produced via $\pi$ by Saito's decomposition theorem. In this sense, this paper is a natural continuation of the seminal work of G\"ottsche--Soergel \cite{GottscheSoergel}. A crucial intermediate result is \Cref{prop:PushforwardOfOmega}, whose statement does not involve Hodge modules, and it can be of independent interest.

\begin{rmk}[Non-compact case]
	We would like to point out that for any smooth complex surface $S$, not necessarily algebraic or K\"ahler, or even compact, the Hilbert--Chow morphism $\pi$ is always a \textit{projective} morphism. This is the basic reason why our results are valid without the K\"ahlerness assumption. Moreover, compactness condition is nowhere used in any proof in this paper, hence all our results remain valid if we replace the compactness assumption by the assumption that all the numbers and dimensions appearing in the formulas are finite; the latter finite dimensionality is indeed guaranteed by the compactness condition by the classical theorem of Cartan--Serre \cite{CartanSerre-Finitude} and Grauert \cite{Grauert-IHES}.
\end{rmk}

\subsection*{Acknowledgment}: I am very grateful to Wanchun Shen, Chuanhao Wei and Ruijie Yang for helpful discussions on Hodge modules. I thank Pieter Belmans, Samuel Boissi\`ere, Andreas Krug, and Claire Voisin for their comments and questions. I am benefited from my visit to Shanghai Center for Mathematical Sciences (SCMS) in summer 2024. I want to thank Zhi Jiang and Zhiyuan Li for organizing the very nice summer school and for their hospitality during my stay. My research is supported by the University of Strasbourg Institute for Advanced Study (USIAS), and by the Agence Nationale de la Recherche (ANR) under projects ANR-20-CE40-0023 and ANR-24-CE40-4098. 

\section{Hodge modules}

This section recalls some basic theory of Hodge modules that we need. We refer to the original paper of M. Saito \cite{Saito88-MHPol} for more details. Roughly speaking, Hodge modules are generalizations of variations of Hodge structures by replacing local systems by more general perverse sheaves and replacing flat connections by more general $\D$-modules, such that these two data are related by the Riemann--Hilbert correspondence established by Kashiwara \cite{Kashiwara-RHProblem-1984} and Mebkhout \cite{Mebkhout-RH-1, Mebkhout-RH-2}.

\subsection{The notion of Hodge modules}
Let $X$ be a complex manifold of dimension $d$.
For an integer $w$, we denote by $\HM(X, w)$ the abelian category of (pure) Hodge modules of weight $w$ on $X$. The subcategory of weight-$w$ polarizable (pure) Hodge modules is denoted by $\HM^\p(X, w)$.

A Hodge module on $X$ is a filtered regular holonomic $\D_X$-module with a rational structure that is subject to some conditions. These conditions are based on Schmid's work \cite{Schmid} on degenerations of variations of Hodge structures. We refer to the original paper of M. Saito \cite{Saito88-MHPol} as well as the surveys \cite{saito1989introduction} and \cite{Schnell-Sanya} for the precise definition. Let us simply recall here  the underlying basic structure (see for example \cite[Definition 7.1.]{Schnell-Sanya}): \textit{a filtered regular holonomic $\D_X$-module with a rational structure} is the datum $M=(\mathbb{M}, \mathcal{M}, F_\bullet\mathcal{M})$ consisting of  a perverse sheaf with $\QQ$-coefficients $\mathbb{M}\in \Perv_\QQ(X)$, a regular holonomic \textit{right} $\D_X$-module $\mathcal{M}$, and an increasing exhaustive filtration $F_\bullet $ of $\mathcal{M}$ by coherent $\mathscr{O}_X$-modules that is compatible with the natural filtration on $\mathscr{D}_X$, 
\begin{equation}
F_i\mathcal{M}\cdot F_j\mathscr{D}_X\subset F_{i+j}\mathcal{M},
\end{equation}
and is \textit{good} in the sense that there exists $i$ such that for all $j\geq 0$
\begin{equation*}
F_i\mathcal{M}\cdot F_j\D_X=F_{i+j}\mathcal{M},
\end{equation*}
such that the complexification $\mathbb{M}_\CC\in \Perv_\CC(X)$ and $\mathcal{M}$ are related by the Riemann--Hilbert correspondence, that is, $\mathbb{M}_{\CC}$  is isomorphic to the de Rham complex associated with the right $\D_X$-module $\mathcal{M}$:
\begin{equation}
\mathbb{M}_\CC\cong \DR(\mathcal{M}).
\end{equation}

Recall that the de Rham complex is defined as the following complex living in degrees $-d, \dots, 0$:
\begin{equation}
\DR(\mathcal{M}):=\left[\mathcal{M}\otimes \bigwedge^dT_X\to \mathcal{M}\otimes \bigwedge^{d-1}T_X\to\cdots \to \mathcal{M}\otimes T_X\to \mathcal{M}\right] .
\end{equation}
It is equipped with the following filtration: for any $p\in \ZZ$,
\begin{equation}
F_p\DR(\mathcal{M}):=\left[F_{p-d}\mathcal{M}\otimes \bigwedge^dT_X\to F_{p-d+1}\mathcal{M}\otimes \bigwedge^{d-1}T_X\to\cdots \to F_{p-1}\mathcal{M}\otimes T_X\to F_p\mathcal{M}\right] .
\end{equation}

The associated graded pieces are  the following complexes of $\mathscr{O}_X$-modules living in degrees $-d, \dots, 0$:
\begin{equation}
\gr^F_p\DR(\mathcal{M})=\left[\gr^F_{p-d}\mathcal{M}\otimes \bigwedge^dT_X\to \gr^F_{p-d+1}\mathcal{M}\otimes \bigwedge^{d-1}T_X\to\cdots \to \gr^F_{p-1}\mathcal{M}\otimes T_X\to \gr^F_p\mathcal{M}\right].
\end{equation}

\begin{rmk}[Hodge modules on singular spaces]
	\label{rmk:HMonSingularSpace}
	Given a possibly singular complex analytic space $X$, Hodge modules on $X$ are defined via some ambient complex manifold $Y$ into which $X$ is embedded (such embedding exists for instance when $X$ is quasi-projective; otherwise one uses local embeddings and glues constructions as in Saito \cite{Saito90-MHM}). More precisely, given such an embedding $X\hookrightarrow Y$, define the category of Hodge modules on $X$ of weight $w$
	\begin{equation}
	\HM(X, w)
	\end{equation}
	to be the subcategory of Hodge modules of weight $w$ on $Y$ with support in $X$. This definition is independent of the choice of the ambient manifold $Y$ (the key ingredient being Kashiwara's equivalence \cite{Kashiwara-MemoireSMF}); see \cite[\S 14]{Schnell-Sanya}.
\end{rmk}
\begin{rmk}
	\label{rmk:grDRwell-defined}
	Given a Hodge module $M=(\mathbb{M}, \mathcal{M}, F_\bullet\mathcal{M})\in \HM(X)$ on a singular analytic space $X$, as in \Cref{rmk:HMonSingularSpace}, $M$ is defined via an ambient complex manifold $Y$. Although $\mathcal{M}$ is a $\D$-module on $Y$, the associated graded pieces $\gr^F_{\bullet} \DR(\mathcal{M})$ are well-defined objects in the derived category of coherent $\mathscr{O}_X$-modules, and are independent of the embedding of $X$ into $Y$, by Schnell \cite[Lemma 7.3]{Schnell-SaitoVanishing-2016}.
\end{rmk}

\subsection{Basic examples}

In order to fix notations, we introduce some examples of Hodge modules that will be used in the paper.
\subsubsection{Constant Hodge module}
Given a complex manifold $X$ of dimension $d$, we have the following basic example of polarizable pure Hodge module of weight $d$:
\begin{equation}\label{eqn:TrivialHM}
\QQ^H_X[d]:=(\QQ_X[d], \omega_X, F_\bullet) \in \HM^\p(X, d),
\end{equation}
called the \textit{constant Hodge module} on $X$,
where $\omega_X$ denotes the canonical bundle viewed as a right $\D_X$-module, the filtration is determined by requiring $F_{-d}\omega_X=\omega_X$ and $F_{-d-1}\omega_X=0$.
The de Rham complex of the underlying right $\mathscr{D}_X$-module $\omega_X$, living in degrees $-d, \dots, 0$,  is the $d$-shift of the classical de Rham complex:
\begin{equation}
\DR(\omega_X)=\left[\mathscr{O}_X\to \Omega^1_X\to \cdots \to \Omega^d_X\right]=\Omega_X^\bullet[d].
\end{equation}

The induced filtration is given as follows:
\begin{align*}
F_0\DR(\omega_X)&=\Omega^\bullet_X[d] \\
F_{-d-1}\DR(\omega_X)&=0 ;\\
F_{-p}\DR(\omega_X)&=\left[\Omega^p_X\to \cdots \to \Omega^d_X\right] \text{ for } 0\leq p\leq d.
\end{align*}
In particular,  for any $p\in \ZZ$, we have
\begin{equation}
\label{eqn:grDRomega}
\gr^F_{-p}\DR(\omega_X)=\Omega_X^p[d-p].
\end{equation}
\subsubsection{Variations of Hodge structure}
More generally, let $V=(\mathbb{V}, \mathcal{V}, \nabla, F^\bullet)\in \VHS^\p(X, w)$ be a polarizable variation of (pure) Hodge structure of weight $w$ over a $d$-dimensional complex manifold  $X$, where $\mathbb{V}$ is the underlying local system, $(\mathcal{V}, \nabla)$ is the flat connection and $F^\bullet$ is the Hodge filtration. We can naturally associate to $V$ a polarizable Hodge module of weight $w+d$ as follows:
\begin{equation}
V^H[d]:=(\mathbb{V}[d], \omega_X\otimes_{\mathscr{O}_X}\mathcal{V}, F_{\bullet})\in \HM^\p(X, w+d),
\end{equation}
where the filtration is defined as $F_p(\omega_X\otimes_{\mathscr{O}_X}\mathcal{V})=\omega_X\otimes_{\mathscr{O}_X}F^{-p-d}\mathcal{V}$ for any $p\in \ZZ$. Note that $\mathcal{V}$ is naturally a left $\D_X$-module, and after tensoring with $\omega_X$ we convert it into a right $\D_X$-module. 

\subsubsection{IC Hodge module}
Given an irreducible complex analytic space $X$ of dimension $d$, let $V=(\mathbb{V}, \mathcal{V}, \nabla, F^\bullet)\in \VHS^\p(U,w)$ be a polarizable variation of Hodge structure of weight $w$ on a non-empty smooth Zariski open subset $U$ of $X$. One main achievement in Saito \cite{Saito90-MHM} is that there is a canonical extension of $V^H[d]\in \HM^\p(U, w+d)$  to a polarizable Hodge module on $X$, denoted by
\begin{equation}
\ICH_X(V)\in \HM^\p(X, w+d),
\end{equation}
whose underlying perverse sheaf is $\IC_X(\mathbb{V})$, the intermediate extension of $\mathbb{V}[d]$ from $U$ to $X$.

In particular, viewing $\QQ$ as the trivial variation of Hodge structure on a smooth Zariski open subset of $X$, the Hodge module $\ICH_X(\QQ)\in \HM^\p(X, d)$ has as underlying perverse sheaf $\IC_X=\IC_X(\QQ)$, and its underlying filtered $\D_X$-module is often denoted simply by $\ICH_X$.

More generally, let $Z$ be an irreducible subvariety of a complex analytic space $X$. Let $i\colon Z\to X$ be the closed immersion and let $d_Z$ be the dimension of $Z$. Let $V\in \VHS^\p(U,w)$ be a polarizable variation of Hodge structure of weight $w$ on a smooth Zariski open dense subvariety $U$ of  $Z$. By Saito \cite{Saito90-MHM}, we have the following polarizable Hodge module on $X$ with $i_*\IC_Z(\mathbb{V})$  as underlying perverse sheaf:

\begin{equation}
i_*\ICH_Z(V)\in \HM^\p(X,w+d_Z).
\end{equation}
Note that $i_*\ICH_Z(V)$ has strict support $Z$, and the structure theorem of Saito \cite{Saito90-MHM} says that every object in $\HM^\p(X)$ with strict support $Z$ is of this form:
\begin{equation}
\HM^\p_Z(X)=\left\{  i_*\ICH_Z(V)~|~ V\in \VHS^\p(U), U\subset Z \text{  open smooth }\right\}.
\end{equation}
Moreover, Saito's strict support decomposition theorem \cite{Saito88-MHPol} (see also \cite[Theorem 15.1]{Schnell-Sanya}) says that as abelian categories
\begin{equation}
\HM^\p(X)
\cong \bigoplus_{Z\subset X}\HM^\p_Z(X),
\end{equation}
with $Z$ running through all integral subvarieties of $X$.

\subsection{Constant Hodge module and Du Bois complex}
For an irreducible complex algebraic variety $X$ of dimension $d$, the \textit{Du Bois complex} of $X$, denoted by $\underline\Omega^\bullet_X$, constructed by Deligne and Du Bois \cite{DuBois}, is a filtered complex of sheaves (defined up to quasi-isomorphism).  For any $p\in \mathbb{N}$, the Du Bois complex of $p$-forms is defined as its associated graded pieces:, $$\underline\Omega_X^p:=\gr^F_{-p}\underline\Omega^\bullet_X [p]:=\gr_F^{p}\underline\Omega^\bullet_X [p].$$
Note that this is a well-defined object in the derived category of coherent $\mathscr{O}_X$-modules \cite[Proposition 7.24]{PetersSteenbrink-MHSBook}. 
If $X$ is smooth, this recovers the usual holomorphic de Rham complex: $\Omega_X^\bullet\cong \underline\Omega^{\bullet}_X$ with the stupid filtration, and $\Omega_X^p\cong \underline\Omega^{p}_X$ in $\operatorname{D^b_{coh}}(X)$.  For the construction and basic properties about the Du Bois complex, we refer to the original source \cite{DuBois}  as well as to \cite{Steenbrink-MHSonVanishingCohomology-1977}  and \cite[\S 7.3]{PetersSteenbrink-MHSBook}. 

Thanks to Saito \cite{Saito-MixedHodgeComplex-DuBois-2000}, the Du Bois complex is closely related to Hodge modules.  Recall that the \textit{constant Hodge module} on a possibly singular complex analytic space $X$, denoted by $\QQ_X^H[d]$, which in general is a mixed Hodge module, is defined as the inverse image of the trivial Hodge module $\QQ$ on a point via the structural map $X\to \operatorname{pt}$. We denote the underlying filtered $\D_X$-module by the same notation $\QQ_X^H[d]$.

Saito \cite[Theorem 0.2]{Saito-MixedHodgeComplex-DuBois-2000} gives a precise relation between the Du Bois complex and the de Rham complex of the constant Hodge module. Let us only give the following  characterization on the associated graded pieces:
\begin{equation}
\label{eqn:grDRIC-2}
\underline\Omega_X^p\cong \gr^F_{-p}\DR(\QQ^H_X[d])[p-d].
\end{equation}

%


In \cite[Section 5]{DuBois}, Du Bois identified, for normal algebraic varieties with finite quotient singularities (V-manifolds), his complex with the de Rham complex of reflexive differentials; see \Cref{sec:VManifolds} below for the terminology about V-manifolds.

\begin{theorem}[Du Bois {\cite[Th\'eor\`eme 5.3]{DuBois}}]
	\label{thm:DuBoisComplexVManifolds}
	For an algebraic V-manifold $X$, there is a canonical isomorphism between the Du Bois complex and the de Rham complex of reflexive differentials as objects in the derived category of filtered complexes of sheaves:
	\begin{equation}
	\underline\Omega_X^\bullet\cong \Omega_X^{[\bullet]}.
	\end{equation}
	In particular, $\Omega_X^{[p]}\cong \underline\Omega_X^p$ for any $p\in \ZZ$.
\end{theorem}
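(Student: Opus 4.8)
The plan is to route the computation through the bridge between the Du Bois complex and the constant Hodge module recorded in \eqref{eqn:grDRIC-2}, exploiting the local description of a V-manifold as a finite quotient of a smooth variety. Both sides of the asserted isomorphism are built by local, sheaf-theoretic constructions and the comparison will be canonical, so I would first reduce to the model $X=U/G$ with $U$ smooth and $G$ a finite group acting on $U$, with quotient map $\pi\colon U\to X$. Throughout we work in characteristic $0$, so the functor of taking $G$-invariants is exact; this is the source of all the vanishing below.

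The first substantial step is to identify the constant Hodge module of the V-manifold with an invariant pushforward from its smooth cover. Since $X$ has only quotient singularities it is a $\QQ$-homology manifold, whence $\IC_X\cong\QQ_X[d]$, the constant Hodge module $\QQ^H_X[d]$ is pure of weight $d$, and it coincides with $\ICH_X(\QQ)$. Because $\pi$ is finite, $\pi_*$ is exact on Hodge modules and $R\pi_*=\pi_*$; the module $\pi_*\QQ^H_U[d]$ is then pure of weight $d$, and its $G$-invariant summand has underlying perverse sheaf $(\pi_*\QQ_U[d])^G=\QQ_X[d]=\IC_X$. As both $(\pi_*\QQ^H_U[d])^G$ and $\QQ^H_X[d]$ have strict support $X$ and restrict to the trivial variation of Hodge structure on the free locus, Saito's strict-support decomposition forces
\[
\QQ^H_X[d]\cong\left(\pi_*\QQ^H_U[d]\right)^G .
\]

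Applying $\gr^F_{-p}\DR$ and using that the de Rham functor is compatible with the finite pushforward and with the exact invariants functor, together with the computation \eqref{eqn:grDRomega} on the smooth $U$, I would obtain
\[
\underline\Omega_X^p\cong\gr^F_{-p}\DR(\QQ^H_X[d])[p-d]\cong\left(\pi_*\,\gr^F_{-p}\DR(\QQ^H_U[d])\right)^G[p-d]\cong(\pi_*\Omega_U^p)^G .
\]
There remains the classical identification of the $G$-invariant $p$-forms $(\pi_*\Omega_U^p)^G$ with the reflexive differentials $\Omega_X^{[p]}$: both sheaves are reflexive and they already agree on the locus where $G$ acts freely, so after replacing $G$ by its quotient by the subgroup generated by pseudo-reflections --- harmless by Chevalley--Shephard--Todd --- the remaining locus has codimension at least $2$ and reflexivity forces equality. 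Transporting the filtrations through this chain, the Hodge filtration on $\DR(\QQ^H_X[d])$ matching the stupid filtration on $\Omega_X^{[\bullet]}$, upgrades the degreewise isomorphisms to the filtered isomorphism $\underline\Omega_X^\bullet\cong\Omega_X^{[\bullet]}$.

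The hard part will be the filtered statement rather than its graded incarnation: while \eqref{eqn:grDRIC-2} hands over the graded pieces directly, promoting them to an isomorphism of filtered complexes requires that the comparison morphism be canonical and strictly filtered, which is exactly where the full force of Saito's comparison \cite{Saito-MixedHodgeComplex-DuBois-2000} and the filtered compatibility of $\DR$ with finite pushforward must be invoked. A secondary but genuine subtlety is the reduction to a pseudo-reflection-free action in the closing reflexivity argument, the one place where characteristic $0$ and the structure theory of finite linear groups are indispensable.
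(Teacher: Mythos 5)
The paper does not prove this statement at all: it is quoted verbatim from Du Bois's original article, whose argument is entirely different from yours --- it proceeds via (cubical/simplicial) hyperresolutions and a direct comparison of the resulting complex with the invariant differential forms on a smooth local cover, with no Hodge modules in sight. Your route is the anachronistic one that runs Saito's later identification \eqref{eqn:grDRIC-2} of $\underline\Omega_X^p$ with $\gr^F_{-p}\DR(\QQ^H_X[d])[p-d]$ backwards: you prove $\gr^F_{-p}\DR(\QQ^H_X[d])[p-d]\cong\Omega_X^{[p]}$ by the quotient-map/strictness argument and then read off the Du Bois comparison. This is logically legitimate (no circularity, since \eqref{eqn:grDRIC-2} is independent of Du Bois's theorem), and in fact it is essentially the paper's own \emph{analytic} proof of \Cref{prop:ReflexiveDiffVmanifolds}\,/\,\Cref{prop:ReflexiveDiffVmanifold}, where the constant Hodge module is split off $\R\varpi_*\QQ^H_V[d]$ over each orbifold chart, \Cref{thm:Strictness} is applied, and torsion-freeness of $\Omega^{[p]}$ is used to glue the local canonical isomorphisms. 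Two caveats on your version of that argument: a general algebraic V-manifold is not globally of the form $U/G$ with $U$ smooth, so your ``reduction to the model'' must be replaced by the local-chart-plus-gluing step that the paper carries out; and the invariants $(\pi_*\QQ^H_U[d])^G$ should be cut out by the averaged adjunction/trace idempotent rather than by appealing to strict support alone.

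The genuine gap is the filtered statement, which you correctly identify as the hard part but do not close. Producing isomorphisms $\underline\Omega_X^p\cong\Omega_X^{[p]}$ on all graded pieces does not yield an isomorphism $\underline\Omega_X^\bullet\cong\Omega_X^{[\bullet]}$ in the filtered derived category: one must exhibit an actual filtered morphism between the two complexes and then check it is a filtered quasi-isomorphism (for which graded-level bijectivity does suffice, by biregularity of the filtrations). Your sentence ``transporting the filtrations through this chain'' asserts the existence of such a morphism without constructing it; the natural candidate is the composite of Saito's filtered comparison from \cite{Saito-MixedHodgeComplex-DuBois-2000} with the filtered direct-image isomorphism for the finite projective map $\varpi$, but that composite has to be written down and its strictness verified. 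For the purposes of this paper only the graded consequence $\Omega_X^{[p]}\cong\underline\Omega_X^p$ is ever used, so your argument does recover everything that is actually needed downstream; it just does not yet prove the theorem as stated.
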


\subsection{Strictness of direct images}

The following theorem due to Saito on the strictness of direct image is the key ingredient which allows us to access the direct image of sheaves of differential forms:

\begin{theorem}[Saito {\cite[2.3.7]{Saito88-MHPol}}, see also {\cite[Theorem 28.1]{Schnell-Sanya}}]
	\label{thm:Strictness}
	Let $f\colon X \to Y$ be a projective morphism between complex manifolds. Let $M=(\mathbb{M},\mathcal{M}, F_\bullet \mathcal{M})$
	be a Hodge module  on $X$. Then for any $p\in \ZZ$,  we have isomorphisms
	\begin{equation}
	\R f_*\gr^F_p \DR (\mathcal{M})\cong \gr^F_p\DR (f_+\mathcal{M})\cong \bigoplus_{i\in \ZZ} \gr_p^F \DR(\mathcal{H}^if_+\mathcal{M})[-i].
	\end{equation}
\end{theorem}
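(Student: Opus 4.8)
The plan is to derive both isomorphisms from Saito's fundamental structure theorem for the filtered direct image of a polarizable pure Hodge module under a projective morphism, the essential analytic input being \emph{strictness}.

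First I would recall the construction of the filtered direct image. For a filtered right $\D_X$-module $(\mathcal{M}, F_\bullet)$ and a projective $f\colon X\to Y$, one sets $f_+\mathcal{M}:=\R f_*\left(\mathcal{M}\otimes^{\mathbf{L}}_{\D_X}\D_{X\to Y}\right)$, a complex of $\D_Y$-modules equipped with the filtration induced from $F_\bullet\mathcal{M}$ and the order filtration on the transfer bimodule $\D_{X\to Y}$. A formal manipulation---the identity relating the de Rham complex of the transfer module to the relative differentials, combined with the projection formula---yields a filtered identity $\DR_Y(f_+\mathcal{M})\cong \R f_*\DR_X(\mathcal{M})$ in the derived category of filtered complexes. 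Taking the $p$-th graded piece of this filtered identity would give the first asserted isomorphism $\R f_*\gr^F_p\DR(\mathcal{M})\cong \gr^F_p\DR(f_+\mathcal{M})$, \emph{provided} the functor $\gr^F_p$ commutes with passing to the cohomology sheaves of $f_+\mathcal{M}$---and this is precisely where strictness intervenes.

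So the real content is to prove that the filtered complex $f_+(\mathcal{M}, F_\bullet)$ is \emph{strict}, i.e.\ that the cohomology of $\gr^F_\bullet$ agrees with $\gr^F_\bullet$ of the cohomology, equivalently that no filtration jumps are lost upon taking $\mathcal{H}^i f_+$. I would establish this by the inductive scheme at the heart of Saito's theory: arguing by induction on $\dim\operatorname{Supp}M$ and reducing along the strata of $f$ via the nearby- and vanishing-cycle functors $\psi$ and $\phi$, whose strict compatibility with direct images and with the Hodge filtration is built into the very axioms defining the category $\HM^\p$. The polarization is indispensable here: it controls the maps among the cohomology modules and ultimately forces the filtration to behave exactly, which is what guarantees strictness. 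The hard part is exactly this step---it is the deepest portion of the theory and admits no shortcut.

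Finally, for the second isomorphism I would invoke the decomposition theorem for polarizable Hodge modules. Relative hard Lefschetz---the statement that the operator $\ell=c_1(\mathcal{A})$ attached to an $f$-ample line bundle $\mathcal{A}$ induces isomorphisms $\ell^i\colon \mathcal{H}^{-i}f_+M\isomarrow \mathcal{H}^i f_+M$---together with the semisimplicity of polarizable Hodge modules yields a filtered splitting $f_+M\cong \bigoplus_{i}\mathcal{H}^i f_+M[-i]$ in the derived category of filtered $\D_Y$-modules. Applying the graded de Rham functor $\gr^F_p\DR$ to this splitting, and using strictness once more to commute it past $\gr^F_p$, produces $\gr^F_p\DR(f_+\mathcal{M})\cong \bigoplus_i \gr^F_p\DR(\mathcal{H}^i f_+\mathcal{M})[-i]$, as claimed. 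I expect the formal first step and the decomposition in the last step to be comparatively routine once strictness is available; the whole weight of the theorem rests on the inductive proof of strictness.
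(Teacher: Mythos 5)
The paper does not prove this statement at all: it is imported wholesale as a theorem of Saito, with the citation \cite[2.3.7]{Saito88-MHPol} (see also \cite[Theorem 28.1]{Schnell-Sanya}) standing in for the proof, and the only original content nearby is \Cref{rmk:SingularTarget}, which extends the statement to embeddable singular targets. Your proposal is therefore not comparable to anything in the paper; it is an outline of Saito's own argument. As an outline it is faithful: the filtered direct image via the transfer bimodule, the formal compatibility of $\DR$ with $f_+$, strictness of $f_+(\mathcal{M},F_\bullet)$ as the essential content, the inductive proof of strictness through nearby/vanishing cycles and polarizations, and the splitting $f_+M\cong\bigoplus_i\mathcal{H}^if_+M[-i]$ from relative hard Lefschetz plus semisimplicity. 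Two caveats. First, what you have written is a roadmap, not a proof: the entire weight of the theorem rests on the strictness induction, which you describe in one sentence and explicitly decline to carry out, so nothing is actually established beyond what the citation already provides. Second, a small misplacement: the first isomorphism $\R f_*\gr^F_p\DR(\mathcal{M})\cong\gr^F_p\DR(f_+\mathcal{M})$, with $f_+\mathcal{M}$ read as a filtered complex, is the formal part (taking $\gr^F_p$ commutes with $\R f_*$ in the filtered derived category); strictness is what is needed for the \emph{second} isomorphism, i.e.\ to pass from the complex $f_+\mathcal{M}$ to its cohomology modules $\mathcal{H}^if_+\mathcal{M}$ without losing graded pieces, together with the decomposition theorem to split the complex. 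If your goal is to match the paper, the honest answer is that the paper treats this as a black box and so should you; if your goal is to reprove Saito's theorem, the strictness induction cannot be waved at.
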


\begin{rmk}[Singular target]
	\label{rmk:SingularTarget}
	Let us point out that the smoothness assumption on $Y$ in   \Cref{thm:Strictness} is not necessary: the same result holds true for $Y$ a complex analytic space that is embeddable into a complex manifold. This is probably well-known to experts. But since we will need to apply the theorem in this more general setting, let us give the argument here. Let $i\colon Y\hookrightarrow Z$ be an embedding of $Y$ into a complex manifold $Z$, and let $g:=i\circ f$ be the composed morphism $X\to Z$. Then by \Cref{thm:Strictness}, we have isomorphisms 
	\begin{equation}
	\label{eqn:IsomOnZ}
	i_*\R f_*\gr^F_p \DR (\mathcal{M})\cong \R g_*\gr^F_p \DR (\mathcal{M})\cong \gr^F_p\DR (g_+\mathcal{M}).
	\end{equation}  However, by construction, the $\D_Z$-module $g_+\mathcal{M}$ is supported on $Y$, hence by \cite[Lemma 7.3]{Schnell-SaitoVanishing-2016}, $\gr^F_p\DR (g_+\mathcal{M})$ is a well-defined (independent of $i$ and $Z$) complex of coherent $\mathscr{O}_Y$-modules up to quasi-isomorphism, namely, by definition: 
	\begin{equation}
	\gr^F_p\DR (g_+\mathcal{M})\cong i_*\gr^F_p\DR (f_+\mathcal{M}).
	\end{equation}
	Since $i_*$ is conservative, we conclude that $\R f_*\gr^F_p \DR (\mathcal{M})\cong \gr^F_p\DR (f_+\mathcal{M})$.
\end{rmk}

\subsection{Finite birational morphisms}

\begin{lemma}
	\label{lemma:BirationalFinite}
	Let $\nu\colon X\to Y$ be a finite birational morphism (e.g. normalization) between complex algebraic varieties (or bimeromorphic map between normal complex analytic spaces). Let $U$ be a Zariski-open subvariety of $X$ over which $\nu$ is an isomorphism. Let $V$ be a polarizable variation of Hodge structures over $U$. Then we have an isomorphism of Hodge modules over $Y$:
	\begin{equation}
	\nu_*\ICH_X(V)\cong \ICH_Y(V).
	\end{equation}
\end{lemma}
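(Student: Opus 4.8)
The plan is to compute the direct image $\nu_*\ICH_X(V)$ on $Y$ and to recognize it as $\ICH_Y(V)$ by means of Saito's decomposition and strict support theorems. Assume $X$ (hence $Y$) irreducible, write $d:=\dim X=\dim Y$, and let $j_X\colon U\hookrightarrow X$, $j_Y\colon U\hookrightarrow Y$ be the open inclusions, so that $\nu\circ j_X=j_Y$ after identifying $U$ with its image under the isomorphism $\nu|_{\nu^{-1}(U)}$. Since $\nu$ is birational the dimensions agree, whence the source $\ICH_X(V)\in\HM^\p(X,w+d)$ and the expected target $\ICH_Y(V)\in\HM^\p(Y,w+d)$ carry the same weight; this is the first consistency check.

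First I would reduce to a single Hodge module. A finite morphism is proper with $0$-dimensional fibres and is projective, so $\R\nu_*=\nu_*$ and $\nu_*$ is $t$-exact for the perverse $t$-structure; in particular $\,^\p\mathcal{H}^i(\nu_*\IC_X(\mathbb{V}))=0$ for $i\neq 0$, so $\mathcal{H}^i\nu_*\ICH_X(V)=0$ for $i\neq 0$. Applying Saito's direct image (decomposition) theorem to the projective morphism $\nu$ and the pure polarizable module $\ICH_X(V)$, the surviving cohomology $M:=\nu_*\ICH_X(V)=\mathcal{H}^0\nu_*\ICH_X(V)$ is a single polarizable pure Hodge module of weight $w+d$, to which the strict support decomposition $\HM^\p(Y)\cong\bigoplus_{Z\subseteq Y}\HM^\p_Z(Y)$ applies.

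Next I would determine the strict support of $M$ by computing its underlying perverse sheaf $\nu_*\IC_X(\mathbb{V})$. Because all fibres of $\nu$ are $0$-dimensional, $\nu$ is semismall and the only relevant stratum is the open dense one; hence $\nu_*\IC_X(\mathbb{V})\cong\IC_Y(L)$, where $L$ is the local system obtained by pushing $\mathbb{V}$ forward over the open stratum. As $\nu$ is an isomorphism over $U$ we have $L\cong\mathbb{V}$, so $\nu_*\IC_X(\mathbb{V})\cong\IC_Y(\mathbb{V})$, an intermediate extension with strict support $Y$. Consequently, in the strict support decomposition of $M$ no summand with support $Z\subsetneq Y$ can occur, so $M\in\HM^\p_Y(Y)$, and by Saito's structure theorem $M\cong\ICH_Y(V'')$ for a polarizable VHS $V''$ on a dense open of $Y$. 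Restricting to $U$, where $\nu$ is an isomorphism and both $\ICH_X(V)$ and $\ICH_Y(V'')$ restrict to the VHS-Hodge-modules $V^H[d]$ and $(V'')^H[d]$, forces $V''\cong V$ there; since a pure Hodge module of strict support $Y$ is the $\IC$-extension of its generic VHS, this yields the desired isomorphism $\nu_*\ICH_X(V)\cong\ICH_Y(V)$.

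The genuinely delicate points are two. First, the argument must run in the complex-analytic, possibly non-algebraic, setting: Saito's decomposition and structure theorems are then invoked through the mixed Hodge module formalism on analytic spaces via local embeddings (Saito \cite{Saito90-MHM}), and one must ensure $\nu$ is projective — which holds because finite morphisms are projective — so that the decomposition theorem is available (the analytic version of the statement is likewise understood with $\nu$ finite bimeromorphic). Second, the smallness computation of $\nu_*\IC_X(\mathbb{V})$ is best carried out via the semismall/intermediate-extension characterization rather than a stratified Euler-characteristic count, so as not to assume $X$ is a rational homology manifold; the weight and purity bookkeeping in the previous step is then routine. I expect the analytic bookkeeping, rather than any single computation, to be the main thing to get right.
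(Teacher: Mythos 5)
Your argument is correct, but it takes a longer route than the paper's. The paper's proof is a three-line computation: it writes $\ICH_X(V)=j_{!*}(V^H[d])=\operatorname{Im}(\mathcal{H}^0j_!V^H[d]\to \mathcal{H}^0j_*V^H[d])$, observes that $\nu_*=\nu_!$ is t-exact (so it commutes with $\mathcal{H}^0$ and with taking images), and concludes $\nu_*j_{!*}=(\nu\circ j)_{!*}$ directly --- no decomposition theorem, no strict support decomposition, no structure theorem. You instead invoke Saito's decomposition theorem to see that $M=\mathcal{H}^0\nu_*\ICH_X(V)$ is a single pure polarizable module, identify its underlying perverse sheaf, rule out summands with smaller strict support, and pin down the generic VHS by restricting to $U$. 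This works and is a perfectly standard way to argue, but it is heavier machinery for the same payoff, and it has one loose joint: your identification $\nu_*\IC_X(\mathbb{V})\cong\IC_Y(\mathbb{V})$ is justified by appeal to the semismall decomposition with ``only the dense stratum relevant,'' yet the Borho--MacPherson statement as formulated applies to $\R\pi_*\QQ_X[d]$ for $X$ smooth (or rationally smooth), whereas here $X$ may be singular and the object pushed forward is $\IC_X(\mathbb{V})$. You flag this yourself in your closing paragraph, and the clean repair is exactly the image characterization of the intermediate extension plus t-exactness of $\nu_*$ --- i.e.\ the paper's entire proof. So once that step is made rigorous, your argument contains the paper's proof as a sub-step and the surrounding apparatus (purity, strict support, structure theorem) becomes redundant. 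What your approach buys is a template that generalizes to non-finite proper maps, where one genuinely must track which strict supports survive; for a finite birational $\nu$ the direct computation is both shorter and avoids any smoothness or rational-smoothness hypothesis on $X$.
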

\begin{proof}
	Denote by $j\colon U\to X$ and $j'\colon U\to Y$ the open immersions, where $j'=\nu\circ j$.  Let $d=\dim(X)$. By definition, 
	\begin{equation}
	\ICH_X(V)=j_{!*}(V^H[d]):=\operatorname{Im}(\mathcal{H}^0j_!V^H[d]\to \mathcal{H}^0j_*V^H[d]).
	\end{equation}
	Here $\mathcal{H}^0$ is taken in the derived category of Hodge modules (corresponding to the perverse ${}^\p\mathcal{H}^0$ for the underlying complex of constructible sheaves).
	Since $\nu$ is finite, $\nu_*=\nu_!$ is t-exact, hence commutes with $\mathcal{H}^0$ and preserves kernels, cokernels and images. Therefore, 
	\begin{equation}
	\nu_*\ICH_X(V)\cong\operatorname{Im}(\mathcal{H}^0\nu_!j_!V^H[d]\to \mathcal{H}^0\nu_*j_*V^H[d])\cong \operatorname{Im}(\mathcal{H}^0j'_!V^H[d]\to \mathcal{H}^0j'_*V^H[d])=\ICH_Y(V).
	\end{equation}
\end{proof}

\section{V-manifolds and reflexive differentials}
\label{sec:VManifolds}

A \textit{V-manifold} is a normal complex analytic space with finite quotient singularities; see Satake's original papers for the generalities \cite{Satake-1956, Satake-1957}. For the Hodge theory of V-manifolds, we refer to Steenbrink \cite{Steenbrink-MHSonVanishingCohomology-1977} as the basic reference.

Let $X$ be a V-manifold of dimension $d$. According to Prill \cite[Proposition 6]{Prill}, each singular point $x\in X$ admits an open neighborhood $U_x$ with orbifold chart $(V_x, G_x, U_x\xrightarrow{\cong} V_x/G_x)$ with $G_x$ a finite subgroup of $\GL_d(\CC)$ and  $0\in V_x\subset \CC^d$ an open subset stable under $G_x$, such that the fixed loci for all $g\in G_x\backslash\{\id\}$  are of codimension $\geq 2$. The group $G_x$ is uniquely determined up to conjugation, called the \textit{local fundamental group}, or the \textit{stabilizer group}, at $x$. These orbifold charts determines a unique effective analytic Deligne--Mumford stack $\mathscr{X}$ with stacky locus of codimension $\geq 2$ and with underlying coarse moduli space $X$. The data of $X$ and $\mathscr{X}$ are thus equivalent data. 

\subsection{V-bundles}
We recall the notion of $V$-bundles (see for example \cite[Section 2]{Blache}). Keep the notations as above. A \textit{V-bundle} on a V-manifold $X$ is a reflexive coherent $\mathscr{O}_X$-module $F$ such that for any orbifold chart $(V_x, G_x, U_x\xrightarrow{\cong} V_x/G_x)$ as above, $\hat{F}=\varpi^*(F|_{U_x})^{\vee\vee}$ is a vector bundle on $V_x$, where $\varpi\colon V_x\to V_x/G_x\xrightarrow{\cong} U_x$ is the natural map. The reflexive sheaf $F$ can be recovered from the vector bundle $\hat{F}$ endowed with the natural $G_x$-linearization via invariant push-forward: $F|_{U_x}\cong (\varpi_*\hat{F})^{G_x}$.

In general, for a finite group $G$ acting on a complex manifold $V$ with fixed loci for all $g\in G_x\backslash\{\id\}$ of codimension $\geq 2$, let $U=V/G$ be the corresponding V-manifold and $\varpi\colon V\to U$ the natural map. There is an equivalence of categories between the category of V-bundles on $U$, and the category of vector bundles on $V$ with $G$-linearization given as follows:
\begin{align}
\label{eqn:V-bundle}
\begin{split}
\{\text{V-bundles on  } U \} &\xrightarrow{\cong} \{\text{Vector bundles on $V$ with $G$-linearization} \}\\
F &\mapsto \varpi^*(F)^{\vee\vee}
\end{split}
\end{align}
with inverse given by $\hat{F}\mapsto \varpi_*(\hat{F})^G$.
This also gives an equivalence of categories between vector bundles on $\mathscr{X}$ and V-bundles on $X$.

\subsection{Reflexive differentials}

\begin{definition}[Reflexive differentials]
	Let $X$ be a V-manifold of dimension $d$. For any $0\leq p\leq d$, we define the sheaf of  \textit{reflexive $p$-differentials} as
	\begin{equation}
	\Omega_X^{[p]}:= j_*\Omega^p_{X_{\operatorname{reg}}},
	\end{equation}
	where $j\colon X_{\operatorname{reg}}\to X$ is the open immersion of the smooth locus of $X$. Note that when $X$ is algebraic, $\Omega_X^{[p]}$ is nothing but the analytification of the reflexive hull of the K\"ahler $p$-differentials $\Omega_X^p$ and it is a V-bundle.
	
	One can form the de Rham complex of reflexive differentials $\Omega_X^{[\bullet]}$, living in degrees $\{0, \dots, d\}$. It is naturally equipped with the stupid filtration: $F_{-p}\Omega_X^{[\bullet]}=[\Omega_X^{[p]}\to \Omega_X^{[p+1]}\to \cdots\to \Omega_X^{[d]}]$, which lives in degrees $\{p, p+1,\dots, d\}$. 
\end{definition}
The twisted Hodge groups and numbers defined in \eqref{eqn:TwistedHodge} naturally generalize for V-manifolds.
\begin{definition}
	\label{def:TwistedHodgeGroups}
	Given a V-manifold $X$ and a holomorphic line bundle $\mathcal{L}$ on it,  for any $p,q$, the $(p,q)$-th \textit{twisted Hodge group} is the following cohomology using reflexive differentials:
	\begin{equation}
	H^{p,q}(X, \mathcal{L}):=H^q(X, \Omega^{[p]}_X\otimes \mathcal{L}),
	\end{equation}
	whose dimension is denote by $h^{p,q}(X, \mathcal{L})$, called the $(p,q)$-th \textit{twisted Hodge number}. 
\end{definition}

\subsection{Relation with constant Hodge module}
Let $X$ be a V-manifold of dimension $d$, then $X$ is a rational homology manifold, and 
\begin{equation}
\IC_X(\QQ)\cong\QQ_X[d].
\end{equation}
The IC Hodge module $\ICH_X(\QQ)=(\QQ_X[d], \ICH_X, F_\bullet)$ coincides with the constant Hodge module $\QQ_X^H[d]$. 

The relation \eqref{eqn:grDRomega} between the constant Hodge module and the differentials for complex manifold naturally extends to V-manifolds, upon replacing differentials by reflexive differentials:
\begin{prop}
	\label{prop:ReflexiveDiffVmanifold}
	Let $X$ be a V-manifold of dimension $d$. Let $\ICH_X$ be the underlying filtered $\D_X$-module of the IC (or constant) Hodge module of $X$. For any integer $p$, there is a canonical isomorphism of coherent $\mathscr{O}_X$-modules:
	\begin{equation}
	\Omega_X^{[p]}\cong \gr^F_{-p} \DR (\ICH_X)[p-d].
	\end{equation}  
\end{prop}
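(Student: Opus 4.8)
The plan is to reduce everything to the smooth case via the local description of $X$ as a finite quotient, and then transport the smooth identity \eqref{eqn:grDRomega} across the quotient using Saito's strictness theorem. In the algebraic case one can argue more quickly: combining Du Bois's \Cref{thm:DuBoisComplexVManifolds}, which gives $\Omega_X^{[p]}\cong \underline\Omega_X^p$, with the relation \eqref{eqn:grDRIC-2} between the Du Bois complex and the constant Hodge module, and with the identification $\QQ_X^H[d]=\ICH_X(\QQ)$ recalled just above, yields the claim at once, since the underlying filtered $\D_X$-module of $\QQ_X^H[d]$ is $\ICH_X$. The point of the argument below is to reach the same conclusion for an arbitrary, possibly non-algebraic, V-manifold, where Du Bois's theorem is not directly available.

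Since both sides of the asserted isomorphism are objects in the derived category of coherent $\mathscr{O}_X$-modules and the isomorphism will be canonical, I may work locally and glue; so I fix an orbifold chart, writing $X=V/G$ with $\varpi\colon V\to X$ the quotient of a smooth $d$-dimensional $V$ by a finite $G\subset\GL_d(\CC)$ whose non-trivial fixed loci have codimension $\geq 2$. The first step is to identify $\ICH_X$ as a filtered $\D_X$-module. Because $\varpi$ is finite, $\varpi_*$ is t-exact on Hodge modules and $\varpi_*\QQ_V^H[d]$ is a polarizable pure Hodge module of weight $d$ carrying a $G$-action; its $G$-invariant part has underlying perverse sheaf $(\varpi_*\QQ_V[d])^G=\QQ_X[d]=\IC_X(\QQ)$, so by Saito's strict support decomposition it coincides with $\QQ_X^H[d]=\ICH_X(\QQ)$. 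Hence, as filtered $\D$-modules,
\begin{equation*}
\ICH_X\cong(\varpi_+\omega_V)^G,
\end{equation*}
the $G$-invariants of the filtered direct image, $|G|$ being invertible so that $(-)^G$ is exact.

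The second step applies Saito's strictness \Cref{thm:Strictness}, valid here for the singular target $X$ by \Cref{rmk:SingularTarget}, to the finite (hence projective) morphism $\varpi$ and the constant Hodge module $\omega_V$. Since $\varpi$ is finite, $\R\varpi_*=\varpi_*$, and \eqref{eqn:grDRomega} gives $\gr^F_{-p}\DR(\omega_V)=\Omega_V^p[d-p]$, so strictness reads
\begin{equation*}
\varpi_*\Omega_V^p[d-p]\cong \gr^F_{-p}\DR(\varpi_+\omega_V).
\end{equation*}
Taking $G$-invariants — exact, and commuting with the functor $\gr^F_{-p}\DR(-)$ since the latter is built functorially from the associated graded — turns the right-hand side into $\gr^F_{-p}\DR(\ICH_X)$ by the first step, and the left-hand side into $(\varpi_*\Omega_V^p)^G[d-p]$. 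Finally $(\varpi_*\Omega_V^p)^G=\Omega_X^{[p]}$: this is exactly the V-bundle correspondence \eqref{eqn:V-bundle} applied to the reflexive sheaf $\Omega_X^{[p]}$, whose reflexive pullback $\varpi^*(-)^{\vee\vee}$ on $V$ is $\Omega_V^p$; concretely, both sheaves are reflexive and agree over the smooth locus $X_{\mathrm{reg}}$, whose complement has codimension $\geq 2$. Combining these identifications and shifting by $[p-d]$ yields the desired isomorphism, and in particular shows that $\gr^F_{-p}\DR(\ICH_X)[p-d]$ is concentrated in degree $0$.

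The main obstacle I anticipate is bookkeeping the compatibility of the $G$-action with Saito's machinery: one must check that the $G$-action on $V$ induces a $G$-action on $\varpi_+\omega_V$ (and on its $\DR$) for which the strictness isomorphism is $G$-equivariant, and that forming $\gr^F\DR$ of the $G$-equivariant $\D$-module $\varpi_+\omega_V$ — defined through an ambient embedding as in \Cref{rmk:grDRwell-defined} — commutes with taking invariants. Once exactness of $(-)^G$ and functoriality of $\gr^F\DR$ are in hand this is routine, but it is the step that must be written with care; the remaining ingredients (finiteness of $\varpi$, reflexivity over $X_{\mathrm{reg}}$, and the strict support decomposition) are standard.
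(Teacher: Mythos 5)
Your argument is correct, and in outline it follows the paper's own strategy: the algebraic case is dispatched exactly as you do, via Du Bois's \Cref{thm:DuBoisComplexVManifolds} and Saito's relation \eqref{eqn:grDRIC-2}, and the analytic case is reduced to a local orbifold chart $\varpi\colon V\to U=V/G$, where Saito's strictness (\Cref{thm:Strictness} together with \Cref{rmk:SingularTarget}) applied to the finite projective map $\varpi$, the identity \eqref{eqn:grDRomega} upstairs, and the identification $(\varpi_*\Omega^p_V)^G\cong\Omega^{[p]}_U$ downstairs do the work. Where you genuinely diverge is in how $\ICH_U$ is isolated inside $\varpi_+\omega_V$: the paper splits off the unit map $\QQ^H_U[d]\to\R\varpi_*\QQ^H_V[d]$ via the decomposition theorem and kills the complement by a torsion-freeness/support argument, whereas you identify $\ICH_U$ with the $G$-invariant summand $(\varpi_+\omega_V)^G$ and take $G$-invariants of the strictness isomorphism. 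Your route costs you the equivariance checks you flag -- all routine, since $(-)^G$ is the image of an idempotent in the abelian category $\HM^\p(U)$, the resulting splitting is one of filtered $\D$-modules, and $\gr^F\DR$ is additive -- but it buys real precision: the complement of $\QQ^H_U[d]$ in $\varpi_*\QQ^H_V[d]$ contains the intermediate extensions of the non-trivial isotypic local systems on the \'etale locus, which have \emph{full} support in $U$, and $\varpi_*\Omega^p_V$ has generic rank $|G|\binom{d}{p}$, so it is not isomorphic to $\Omega^{[p]}_U$ without first passing to invariants. The projection onto the $G$-invariant part that your proof performs explicitly is exactly the step that the paper's chain of isomorphisms leaves implicit, so your version is a correct and slightly more careful rendering of the same reduction.
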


\begin{proof}
	In the algebraic setting, since $X$ is a V-manifold,  $\ICH_X=\QQ_X^H[d]$, and $\Omega_X^{[p]}\cong\underline{\Omega}_X^{p}$ by Du Bois' \Cref{thm:DuBoisComplexVManifolds}. Now the assertion follows from Saito's characterization of Du Bois differentials \eqref{eqn:grDRIC-2}.
	
	In the complex analytic setting, this can be deduced as follows. For any local orbifold chart $(V, G, U\xrightarrow{\cong} V/G)$ of $X$, denote $\varpi\colon V\to U$ the natural map. Consider the canonical morphism of Hodge modules
	\begin{equation}
	\label{eqn:OrbifoldChartMorphismHM}
	\QQ_U^H[d]\to \R\varpi_*\QQ^H_V[d].
	\end{equation}
	By Saito's decomposition theorem for Hodge modules, which holds since $\varpi$ is projective, \eqref{eqn:OrbifoldChartMorphismHM} admits a retraction with complement supported in a proper subvariety of $U$. Applying the functor $\gr^F_{-p}\circ \DR$ to the morphism of underlying $\D$-modules in \eqref{eqn:OrbifoldChartMorphismHM}, we get a canonical morphism admitting a retraction with complement having proper support:
	\begin{equation}
	\label{eqn:MorphismWithSection}
	\gr^F_{-p}(\DR(\ICH_U)) \to \gr^F_{-p}(\DR(\varpi_+\omega_V))\cong \R\varpi_*\gr^F_{-p}\DR(\omega_V)\cong \R\varpi_*\Omega^p_V[d-p]\cong \Omega_U^{[p]}[d-p],
	\end{equation}
	where the three isomorphisms use \Cref{thm:Strictness}, \eqref{eqn:grDRomega}, and \cite[Lemma 2.46]{Steenbrink-MHSonVanishingCohomology-1977} respectively.
	
	Now since the $\Omega_U^{[p]}[d-p]$ is torsion-free hence cannot have a non-zero direct summand with proper support, \eqref{eqn:MorphismWithSection} must be an isomorphism. Therefore all the \textit{canonical} isomorphisms \eqref{eqn:MorphismWithSection} from all orbifold charts glue into an isomorphism as claimed.
\end{proof}

\section{Semismall morphisms}

\subsection{Basic definitions and notations}
Recall that a proper surjective morphism  $\pi\colon X\to Y$  between irreducible varieties (or complex analytic spaces) is called \textit{semismall} if 
\begin{equation}
\dim (X\times_YX)\leq \dim (X);
\end{equation}
or equivalently, for any integer $k\geq 1$,
\begin{equation}
\operatorname{codim}\{y\in Y~|~ \dim f^{-1}(y)\geq k\}\geq 2k.
\end{equation}
Note that there exists a smooth dense open subset $U$ of $Y$ with complement of codimension at least 2, such that $\pi$ is finite over $U$. In particular, $\dim(X)=\dim(Y)$.

Let $\pi\colon X\to Y$ be a semismall morphism. By the Thom--Mather theory (\cite{GoreskyMacPherson-MorseBook}), one can stratify $Y$ by locally closed smooth subvarieties $$Y=\bigsqcup_{\alpha}Y_{\alpha},$$
such that $\pi_{\alpha}\colon X_{\alpha}\to Y_{\alpha}$ is a topological fiber bundle, where $X_\alpha=\pi^{-1}(Y_\alpha)$ and $\pi_{\alpha}=\pi|_{X_{\alpha}}$. The semismallness condition says that $\dim (X_{\alpha}\times_{Y_{\alpha}}X_{\alpha})\leq \dim X$, or equivalently, $\frac{1}{2}\operatorname{codim}(Y_\alpha)$ is at least the relative dimension of $\pi_{\alpha}$.  

A stratum $Y_\alpha$ is called \textit{relevant} if $\dim (X_{\alpha}\times_{Y_{\alpha}}X_{\alpha})=\dim X$. In this case, denote by 
\begin{equation}
c_{\alpha}:=\operatorname{codim}(Y_{\alpha}),    
\end{equation}
which is an even integer, then the dimension of fibers over $Y_{\alpha}$ is $\frac{1}{2}c_{\alpha}$. We have the following local system on $X_\alpha$:
\begin{equation}
\mathbb{L}_{\alpha}:=\R^{c_{\alpha}}\pi_{\alpha, *}\QQ_{X_{\alpha}}.
\end{equation}
Note that for dimension reason, the local system $\mathbb{L}_{\alpha}$ is completely determined by the permutation action of $\pi_1(Y_{\alpha})$ on the set of \textit{top-dimensional} irreducible components of the fibers over $Y_{\alpha}$, in particular, it is semisimple. The local system $\mathbb{L}_{\alpha}$ naturally underlies a variation of Hodge structure of weight $c_\alpha$ and of Tate type (i.e. the only nontrivial summand is in degree $(\frac{c_\alpha}{2}, \frac{c_\alpha}{2})$), which we denote by 
\begin{equation}
L_{\alpha}=(\mathbb{L}_\alpha, \mathcal{L}_{\alpha}, \nabla, F^\bullet).
\end{equation}

Denote  by $\overline{Y_{\alpha}}$ the closure of $Y_{\alpha}$ and
\begin{equation}
i_{\alpha}\colon \overline{Y_{\alpha}}\to Y
\end{equation}
the natural closed immersion.

\subsection{Decomposition theorems for semismall morphisms}

The decomposition theorem of Beilinson--Bernstein--Deligne--Gabber \cite{BBDG} says that the pushforward of the intersection complex via a proper morphism decomposes into a direct sum of shifts of perverse sheaves. In general, it can be difficult to determine the supports and local systems appearing in the result of decomposition theorem. However, in the semismall case, the decomposition theorem takes a much simpler and more precise form:

\begin{theorem}[Borho--MacPherson \cite{BorhoMacPherson}]
	Let $\pi\colon X\to Y$ be a proper semismall morphism between varieties of dimension $d$. The notations $Y_{\alpha}, \mathbb{L}_{\alpha}, i_{\alpha}$ are as above. Assume that $X$ is smooth\footnote{It suffices to assume rational smoothness, for example, varieties with quotient singularities.}. Then we have an isomorphism of perverse sheaves
	\begin{equation}
	\label{eqn:BorhoMacPerson}
	\R\pi_*\QQ_X[d]\cong \bigoplus_{\alpha \text{ relevant }}i_{\alpha,*}\IC_{\overline{Y_{\alpha}}}(\mathbb{L}_{\alpha}),
	\end{equation}
	where $\IC_{\overline{Y_{\alpha}}}(\mathbb{L}_{\alpha})=j_{\alpha, !*}\mathbb{L}_{\alpha}[d-c_\alpha]$ is the intermediate extension of the perverse sheaf $\mathbb{L}_{\alpha}[d-c_\alpha]$ on $Y_{\alpha}$ to $\overline{Y_{\alpha}}$,  where $j_\alpha\colon Y_{\alpha}\hookrightarrow \overline{Y_{\alpha}}$ is the open immersion.
\end{theorem}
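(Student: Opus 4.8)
The plan is to derive the statement from the general decomposition theorem of BBDG \cite{BBDG}, using semismallness only to pin down the perverse degrees. The essential point is that semismallness forces $\R\pi_*\QQ_X[d]$ to be a single perverse sheaf, concentrated in perverse degree $0$; once this is known, the BBDG theorem plus a degree count on the strata does the rest.

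First I would verify that $\R\pi_*\QQ_X[d]$ is perverse. Over a stratum $Y_\alpha$ the map $\pi_\alpha$ is a topological fibre bundle whose fibres have dimension at most $\tfrac12 c_\alpha$, so the cohomology sheaves $\mathcal{H}^j(\R\pi_*\QQ_X[d])|_{Y_\alpha}=\R^{j+d}\pi_{\alpha,*}\QQ_{X_\alpha}$ vanish unless $-d\le j\le c_\alpha-d$. Since $\dim Y_\alpha=d-c_\alpha$, this gives $\dim\operatorname{supp}\mathcal{H}^j\le d-c_\alpha\le -j$, i.e. the support condition holds, with equality only in top fibre degree $j=c_\alpha-d$ and only on relevant strata. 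The cosupport condition then follows formally: because $X$ is smooth (or merely rationally smooth) of dimension $d$, the complex $\QQ_X[d]$ is Verdier self-dual up to a Tate twist, and $\pi$ is proper, so $\R\pi_*\QQ_X[d]$ is self-dual and the cosupport condition is dual to the support condition just checked. Hence $\R\pi_*\QQ_X[d]\in\Perv_\QQ(Y)$.

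Next I would invoke the decomposition theorem \cite{BBDG}: since $X$ is (rationally) smooth, $\QQ_X[d]\cong\IC_X$, and its proper pushforward is a semisimple complex, a direct sum of shifts $i_{Z,*}\IC_{\overline{Z}}(\mathcal{M})[-k]$ of intersection complexes with semisimple local-system coefficients. By the previous step $\R\pi_*\QQ_X[d]$ is perverse, so every shift $[-k]$ must vanish, leaving a direct sum of intersection complexes in perverse degree $0$. Grouping the summands by their strict supports expresses $\R\pi_*\QQ_X[d]$ as $\bigoplus_Z i_{Z,*}\IC_{\overline{Z}}(\mathcal{M}_Z)$ over a finite set of integral subvarieties $Z$, each $\mathcal{M}_Z$ a semisimple local system on the smooth locus of $Z$; since $\pi$ is a fibre bundle over each stratum, every such $Z$ must be a closure $\overline{Y_\alpha}$.

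Finally I would identify which supports occur and the local systems on them by restricting to the open dense part of each support. Over $Y_\alpha$ the summand strictly supported on $\overline{Y_\alpha}$ contributes $\mathcal{M}_\alpha$ in the top cohomological degree $j=c_\alpha-d=-\dim Y_\alpha$, whereas any summand whose strict support strictly contains $Y_\alpha$ obeys the strict inequality $\dim\operatorname{supp}\mathcal{H}^j<-j$ characterizing intermediate extensions off their own open stratum, hence contributes on $Y_\alpha$ only in strictly lower degrees $j<c_\alpha-d$. Therefore the local system is read off as the top nonzero cohomology sheaf $\mathcal{H}^{c_\alpha-d}(\R\pi_*\QQ_X[d])|_{Y_\alpha}=\R^{c_\alpha}\pi_{\alpha,*}\QQ_{X_\alpha}=\mathbb{L}_\alpha$, the top fibre cohomology, which is nonzero precisely when the fibres attain dimension $\tfrac12 c_\alpha$, i.e. when $Y_\alpha$ is relevant; this fixes the index set. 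The main obstacle is exactly this last bookkeeping: one must ensure the support estimates are sharp enough to isolate the $\overline{Y_\alpha}$-contribution at degree $c_\alpha-d$ from the lower-degree contributions of the larger strata, and confirm that the local system so obtained is the semisimple $\mathbb{L}_\alpha$ determined by the $\pi_1(Y_\alpha)$-permutation action on the top-dimensional fibre components. The perversity of Step one, though the only place where semismallness is genuinely used, is comparatively routine once the fibre dimensions are tracked.
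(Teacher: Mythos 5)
Your argument is correct and complete. Note, however, that the paper offers no proof of this statement at all: it is quoted as a known theorem with a citation to Borho--MacPherson, and only the Hodge-module enhancement (\Cref{thm:BorhoMacPherson-HM}) is proved in the text, by reducing to this perverse-sheaf statement plus Saito's theory. What you have written is the standard modern derivation from the decomposition theorem of \cite{BBDG}, and all the delicate points are handled properly: the support estimate $\dim\operatorname{supp}\mathcal{H}^j\le -j$ follows from proper base change and the fibre-dimension bound $\dim\pi^{-1}(y)\le\tfrac12 c_\alpha$ over $Y_\alpha$; the cosupport condition comes for free from Verdier self-duality of $\QQ_X[d]$ (rational smoothness suffices) and properness of $\pi$; perversity kills all nonzero shifts in the BBDG decomposition; constructibility with respect to the Thom--Mather stratification forces every strict support to be a stratum closure; and the intermediate-extension support inequality isolates the degree-$(c_\alpha-d)$ contribution on $Y_\alpha$, identifying the coefficient local system with $\R^{c_\alpha}\pi_{\alpha,*}\QQ_{X_\alpha}=\mathbb{L}_\alpha$, which is nonzero exactly on the relevant strata. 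One small caveat worth keeping in mind for the paper's eventual application: \cite{BBDG} is stated for algebraic varieties, whereas the paper works with Douady spaces; there the decomposition theorem is supplied by Saito for the \emph{projective} Douady--Barlet morphism, which is how the text sidesteps the issue. For the statement as literally given (varieties), your proof is fine as is.
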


Saito lifts the decomposition theorem to the level of Hodge modules \cite{Saito90-MHM}. In the semismall case, it takes the following more precise form, enriching both sides of \eqref{eqn:BorhoMacPerson} with the structure of pure Hodge modules.


\begin{theorem}
	\label{thm:BorhoMacPherson-HM}
	Let $\pi\colon X\to Y$ be a proper semismall morphism between varieties of dimension $d$. The notations $Y_{\alpha}, L_{\alpha}, i_{\alpha}$ are as above. Assume that $X$ is smooth. Then we have an isomorphism of Hodge modules
	\begin{equation}
	\label{eqn:SaitoSemiSmall}
	\R\pi_*\QQ^H_X[d]\cong \bigoplus_{\alpha \text{ relevant }}i_{\alpha,*}\ICH_{\overline{Y_{\alpha}}}(L_{\alpha}),
	\end{equation}
	where $\ICH_{\overline{Y_{\alpha}}}(L_{\alpha})$ is the unique pure Hodge module on $Y$ with strict support $\overline{Y_{\alpha}}$ that restricts to the (shifted) variation of Hodge structure $L_{\alpha}$ on $Y_{\alpha}$.
\end{theorem}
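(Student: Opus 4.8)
The plan is to deduce the statement from Saito's decomposition theorem \cite{Saito90-MHM} combined with the strict-support decomposition, using semismallness to eliminate all cohomological shifts and the perverse-sheaf-level statement \eqref{eqn:BorhoMacPerson} to pin down the supports and local systems.

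First I would record that, since $X$ is smooth of dimension $d$, the constant Hodge module $\QQ^H_X[d]$ is a polarizable pure Hodge module of weight $d$. Applying Saito's decomposition theorem to $\pi$ yields a (non-canonical) isomorphism
\[
\R\pi_*\QQ^H_X[d]\cong\bigoplus_{i\in\ZZ}\left({}^\p\mathcal{H}^i\R\pi_*\QQ^H_X[d]\right)[-i],
\]
where each ${}^\p\mathcal{H}^i\R\pi_*\QQ^H_X[d]$ is a polarizable pure Hodge module of weight $d+i$. The next step is to observe that semismallness kills the shifts: the underlying perverse sheaf of ${}^\p\mathcal{H}^i\R\pi_*\QQ^H_X[d]$ is the perverse cohomology ${}^\p\mathcal{H}^i\R\pi_*\QQ_X[d]$, and it is a standard consequence of semismallness that $\R\pi_*\QQ_X[d]$ is itself a single perverse sheaf, so this vanishes for $i\neq0$. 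Since a Hodge module whose underlying perverse sheaf is zero is the zero object, ${}^\p\mathcal{H}^i\R\pi_*\QQ^H_X[d]=0$ for $i\neq0$, and hence $M:=\R\pi_*\QQ^H_X[d]$ is a single polarizable pure Hodge module of weight $d$.

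Then I apply the strict support decomposition theorem to write $M\cong\bigoplus_Z M_Z$ with $M_Z\in\HM^\p_Z(Y)$, the sum running over integral subvarieties $Z\subseteq Y$, so that $M_Z\cong i_{Z,*}\ICH_Z(V_Z)$ for a polarizable variation of Hodge structure $V_Z$ on a dense smooth open $U_Z\subseteq Z$. Since the functor sending a pure Hodge module to its underlying perverse sheaf is exact and faithful and sends $M_Z$ to a perverse sheaf with strict support $Z$, comparing $M$ with the Borho--MacPherson decomposition \eqref{eqn:BorhoMacPerson} identifies the nonzero summands precisely as those with $Z=\overline{Y_\alpha}$ for $\alpha$ relevant, and shows that the underlying perverse sheaf of $M_{\overline{Y_\alpha}}$ is $i_{\alpha,*}\IC_{\overline{Y_\alpha}}(\mathbb{L}_\alpha)$; in particular the generic local system of $V_{\overline{Y_\alpha}}$ on $Y_\alpha$ is $\mathbb{L}_\alpha$.

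The remaining and most delicate step is to upgrade this identification from local systems to variations of Hodge structure, i.e.\ to show $V_{\overline{Y_\alpha}}=L_\alpha$. By the uniqueness in Saito's structure theorem, a pure Hodge module with strict support $\overline{Y_\alpha}$ is determined by its restriction to the dense open stratum $Y_\alpha$, which is a shifted polarizable variation of Hodge structure, so it suffices to compute $M|_{Y_\alpha}$. Over $Y_\alpha$ the morphism restricts to the topological fiber bundle $\pi_\alpha\colon X_\alpha\to Y_\alpha$, and for the dimension reasons dictated by semismallness the summand with strict support $\overline{Y_\alpha}$ is cut out by the top relevant cohomology in degree $c_\alpha$; the Hodge structure carried by $\mathbb{L}_\alpha=\R^{c_\alpha}\pi_{\alpha,*}\QQ_{X_\alpha}$ is exactly the weight-$c_\alpha$ Tate-type variation $L_\alpha$ introduced before the statement. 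This gives $M_{\overline{Y_\alpha}}\cong i_{\alpha,*}\ICH_{\overline{Y_\alpha}}(L_\alpha)$ and hence \eqref{eqn:SaitoSemiSmall}. I expect this final Hodge-theoretic identification of the generic variation of Hodge structure (rather than merely of its underlying local system) to be the main obstacle, as it requires tracking the weight and Tate type through the fiber-bundle restriction rather than just the constructible data.
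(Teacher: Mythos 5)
Your proposal is correct and follows essentially the same route as the paper: establish that $\R\pi_*\QQ^H_X[d]$ is a single pure Hodge module, invoke the strict support decomposition, match it against Borho--MacPherson's perverse-sheaf decomposition to fix the supports and underlying local systems, and finally identify the generic variation of Hodge structure by restricting to the open stratum $Y_\alpha$ and using that $\pi_\alpha$ is a topological fiber bundle. The paper's only extra precision in the last step is to apply $\mathcal{H}^0\circ j_\alpha^*\circ i_\alpha^*$ and proper base change to get $\R\pi_{\alpha,*}\QQ^H_{X_\alpha}[d]$ explicitly, which is exactly the mechanism you describe informally.
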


\begin{proof}
	G\"ottsche--Soergel \cite[Theorem 5]{GottscheSoergel} proved the special case where $L_{\alpha}$ has trivial local system, but their proof can be easily adapted to show the general statement. Since we cannot find this precise statement in the literature, let us include a proof here for the convenience of the reader.
	
	By Saito's theory of mixed Hodge modules, $\R\pi_*\QQ^H_X[d]$ is an (a priori mixed) Hodge module and the strict support decomposition of cohomologies of $\R\pi_*\QQ^H_X[d]$ must be the one given by the decomposition  \eqref{eqn:BorhoMacPerson} at the level of perverse sheaves in Borho--MacPherson's \Cref{thm:BorhoMacPherson-HM}. Therefore, we have an isomorphism of pure Hodge modules
	\begin{equation}
	\label{eqn:DecompositionWithUnknownVHS}
	\R\pi_*\QQ^H_X[d]\cong \bigoplus_{\alpha \text{ relevant }}i_{\alpha,*}\ICH_{\overline{Y_{\alpha}}}(L'_{\alpha}),
	\end{equation}
	for some variation of Hodge structure $L'_{\alpha}$ on some dense open subvariety of $\overline{Y_{\alpha}}$ with the underlying local system $\mathbb{L}_\alpha$ same as $L_{\alpha}$.
	
	For a relevant stratum indexed by $\alpha$, to determine the variation of Hodge structure $L'_{\alpha}$, we apply the functor $\mathcal{H}^0\circ j_\alpha^* \circ i_\alpha^*$ to \eqref{eqn:DecompositionWithUnknownVHS} and obtain that
	\begin{equation}
	\mathcal{H}^0 j_\alpha^*  i_\alpha^*(\R\pi_*\QQ^H_X[d])\cong L^{'H}_\alpha[d_\alpha].
	\end{equation}
	By base change, $j_\alpha^*  i_\alpha^*(\R\pi_*\QQ^H_X[d])\cong \R\pi_{\alpha, *}\QQ^H_{X_\alpha}[d]$. Since $\pi_\alpha\colon X_\alpha\to Y_\alpha$ is a topological fibration, we have an isomorphism of variations of Hodge structures:
	\begin{equation}
	L'_\alpha\cong \R^{c_\alpha}\pi_{\alpha, *}\QQ_{X_{\alpha}}=L_\alpha.
	\end{equation}

	%

\end{proof}

\section{Proof of the main results}
\label{sec:Proof}

We prove  \Cref{thm:main-cohomology}, \Cref{thm:main-numbers}, \Cref{cor:Frolicher} and \Cref{cor:BFK-HH} in this section. Let $S$ be a compact complex surface. Set $X=\Hilb^n(S)$ and $Y=S^{(n)}$.  

\subsection{Stratification and semismallness}
The Hilbert--Chow morphism 
\begin{equation}
\pi\colon  X\to Y
\end{equation}
is semismall. There is a natural stratification of $\pi$ by the types of supports of subschemes. More precisely, for a partition $\lambda$ of $n$, we always write 
\begin{equation}
\lambda=(\lambda_1\geq \lambda_2\cdots \geq \lambda_\ell)=(1^{a_1}2^{a_2}\cdots r^{a_r}),
\end{equation}
where for any $k\geq 1$,  $a_k\geq 0$ is the number of $k$'s appearing in the partition $(\lambda_1\geq \lambda_2\cdots \geq \lambda_\ell)$. In particular,  
\begin{equation}\label{eqn:SumOfkak=n}
\sum_{k=1}^r ka_k=n
\end{equation}
and the \textit{length} of $\lambda$ is
\begin{equation}
\label{eqn:SumOfak=l}
|\lambda|=\ell=\sum_{k=1}^r a_k.
\end{equation}

The natural stratification of $\pi$ is indexed by the partitions of $n$ as follows: given a partition $\lambda\dashv n$ as above, define the locally closed subvariety
\begin{equation}
Y_{\lambda}:=S_{\lambda}^{(n)}:=\left\{\sum_{i=1}^{\ell} \lambda_i x_i ~|~ x_i\in S \text{ are distinct}\right\}.
\end{equation}
Then $Y=\bigsqcup_{\lambda\dashv n} Y_{\lambda}$ is a stratification by locally closed subvarieties with smooth strata. Note that $\dim(Y_{\lambda})=2|\lambda|$ and  its codimension in $Y$ is
\begin{equation}
c_{\lambda}=2(n-|\lambda|).
\end{equation}

Let $X_{\lambda}:=\pi^{-1}(Y_{\lambda})$ and $\pi_{\lambda}:=\pi|_{X_{\lambda}}$. The morphism $\pi_\lambda\colon X_{\lambda}\to Y_{\lambda}$ is an isotrivial fibration with all fibers isomorphic to 
\begin{equation}
F_{\lambda}:=\prod_{i=1}^{\ell}\mathbb{B}_{\lambda_i},
\end{equation}
where $\mathbb{B}_m=\Hilb^m(\CC^2)_0$ is the $m$-th \textit{Brian\c{c}on variety} parametrizing subschemes of $\CC^2$ of length $m$ supported at the origin, which is an \textit{irreducible} variety of dimension $m-1$ by  \cite{Briancon}. Therefore, the fiber dimension of $\pi_\lambda$ is 
\begin{equation}
\dim(F_\lambda)=\sum_{i=1}^{\ell}(\lambda_i-1)=n-|\lambda|,
\end{equation} 
which is exactly half of the codimension of $Y_\lambda$.  Hence \textit{all strata are relevant}. 

For each $\lambda\dashv n$,  since $F_{\lambda}$ is irreducible, the local system $\mathbb{L}_{\lambda}$ is the trivial one, hence the variation of Hodge structure $L_\lambda$ has to be the Tate object $\QQ(-\frac{c_\lambda}{2})=\QQ(|\lambda|-n)$ of weight $c_\lambda$.

\subsection{Normalization of strata}
Now we study each stratum. Given a partition $\lambda=(1^{a_1}2^{a_2}\cdots r^{a_r})$ of $n$, the closure of the stratum $Y_\lambda$ in $S^{(n)}$, denoted by  $\overline{Y_{\lambda}}$,  is in general not normal. Its normalization admits the following natural description:

\begin{align*}
\nu_\lambda\colon S^{(a_1)}\times \cdots\times S^{(a_r)}&\to \overline{Y_{\lambda}}\\
(z_1, \dots, z_r)&\mapsto \sum_{k=1}^{r}kz_k,
\end{align*}
where we identify a point in $S^{(a)}$ with an effective zero-cycle of length $a$ on $S$. The morphism $\nu_\lambda$ is an isomorphism over $Y_\lambda$, hence is a \textit{birational} morphism. Moreover, by construction, $\nu_\lambda$ is finite. We denote the closed immersion 
\begin{equation}
i_\lambda\colon \overline{Y_{\lambda}} \to S^{(n)},
\end{equation}
and the composition 
\begin{equation}
\iota_\lambda:=i_{\lambda}\circ \nu_{\lambda}\colon S^{(a_1)}\times \cdots\times S^{(a_r)}\to S^{(n)}.
\end{equation}
\begin{lemma}
	\label{lemma:PullBackLineBundle}
	For any line bundle $L$ on $S$, let $L_{(n)}$ be the induced natural line bundle on $S^{(n)}$ defined as in \eqref{eqn:L(n)}. Then for any partition $\lambda=(1^{a_1}2^{a_2}\cdots r^{a_r})$ of $n$, we have an isomorphism:
	\begin{equation}
	\label{eqn:PullBackLineBundle}
	\iota_\lambda^*L_{(n)}\cong L_{(a_1)}\boxtimes L^{\otimes 2}_{(a_2)}\boxtimes\cdots\boxtimes L^{\otimes r}_{(a_r)}.
	\end{equation}
\end{lemma}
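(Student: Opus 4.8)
The plan is to reduce the assertion to the elementary relation between a line bundle on a symmetric power and its defining linearized bundle upstairs, as packaged by the equivalence of categories \eqref{eqn:V-bundle}. The first point I would record is that each natural bundle $L_{(m)}$ is a genuine \emph{line} bundle on $S^{(m)}$, not merely a reflexive V-bundle: locally $L^{\boxtimes m}$ is trivialized around each support point, the relevant stabilizer permutes the one-dimensional tensor factors and hence acts \emph{trivially} on fibers, so the linearized bundle descends to an honest line bundle. Consequently every sheaf occurring in \eqref{eqn:PullBackLineBundle}, including the ordinary pullback $\iota_\lambda^*L_{(n)}$, is an honest line bundle on its (normal) base, and the equivalence \eqref{eqn:V-bundle} applies without any reflexive-hull correction.

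Next I would set up an equivariant square upstairs. Put $P:=S^{a_1}\times\cdots\times S^{a_r}$ with the factorwise action of $G:=\mathfrak{S}_{a_1}\times\cdots\times\mathfrak{S}_{a_r}$ and quotient $\varpi_P\colon P\to Q:=S^{(a_1)}\times\cdots\times S^{(a_r)}$, and let $\delta\colon P\to S^n$ be the \emph{multiplicity diagonal}, sending the $i$-th point of the $k$-th factor to $k$ identical copies. With the block-permutation embedding $\phi\colon G\hookrightarrow\mathfrak{S}_n$, in which $\mathfrak{S}_{a_k}$ permutes the $a_k$ length-$k$ blocks as wholes, the map $\delta$ is $\phi$-equivariant and fits into a commutative square with $\varpi$, $\varpi_P$, and $\iota_\lambda$, i.e. $\varpi\circ\delta=\iota_\lambda\circ\varpi_P$. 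Note also that $G$ acts with fixed loci of codimension $\geq 2$, so $Q$ is a V-manifold and \eqref{eqn:V-bundle} is available on it.

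Then I would compute upstairs. Since each point of the $k$-th block is repeated $k$ times by $\delta$, one gets $\delta^*(L^{\boxtimes n})\cong L^{\boxtimes a_1}\boxtimes(L^{\otimes 2})^{\boxtimes a_2}\boxtimes\cdots\boxtimes(L^{\otimes r})^{\boxtimes a_r}$. Because $L_{(n)}$ is a line bundle, \eqref{eqn:V-bundle} gives $\varpi^*L_{(n)}\cong L^{\boxtimes n}$ as $\mathfrak{S}_n$-linearized line bundles; pulling back through the commutative square, the $G$-linearized line bundle on $P$ corresponding to $\iota_\lambda^*L_{(n)}$ is $\delta^*\varpi^*L_{(n)}\cong L^{\boxtimes a_1}\boxtimes\cdots\boxtimes(L^{\otimes r})^{\boxtimes a_r}$, carrying the linearization induced along $\phi$. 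The crucial observation is that, since $\phi(G)$ only permutes blocks as wholes, this induced linearization is exactly the \emph{standard} permutation linearization; applying \eqref{eqn:V-bundle} on each factor (and using that tensor powers correspond, so that $(L^{\otimes k})_{(a_k)}\cong L_{(a_k)}^{\otimes k}$) identifies the corresponding V-bundle on $Q$ with $L_{(a_1)}\boxtimes L_{(a_2)}^{\otimes 2}\boxtimes\cdots\boxtimes L_{(a_r)}^{\otimes r}$. Matching the two $G$-linearized bundles on $P$ yields the isomorphism \eqref{eqn:PullBackLineBundle} on $Q$.

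The main obstacle is the bookkeeping of the linearization: verifying that the restriction along $\phi$ of the standard $\mathfrak{S}_n$-linearization of $L^{\boxtimes n}$, transported by $\delta$, is precisely the standard $G$-linearization of $L^{\boxtimes a_1}\boxtimes\cdots\boxtimes(L^{\otimes r})^{\boxtimes a_r}$ and is not twisted by a character. As a lighter alternative that avoids tracking the global linearization, I could instead argue that both sides of \eqref{eqn:PullBackLineBundle} are line bundles on the normal variety $Q$, identify them over the open stratum $Q^\circ\subset Q$ of everywhere-distinct configurations (where $\iota_\lambda$ restricts to an isomorphism onto the smooth stratum $Y_\lambda$ and both fibers are visibly $\bigotimes_{k,i}(L_{x_i^{(k)}})^{\otimes k}$), and then extend the isomorphism across $Q\setminus Q^\circ$ using that this complement has codimension $\geq 2$ and that line bundles are reflexive.
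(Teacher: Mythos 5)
Your proposal is correct and is essentially the paper's own proof: your multiplicity diagonal $\delta$ is exactly the map $\widetilde{\iota_\lambda}$ in the paper's commutative square, the computation $\delta^*(L^{\boxtimes n})\cong L^{\boxtimes a_1}\boxtimes(L^{\otimes 2})^{\boxtimes a_2}\boxtimes\cdots\boxtimes(L^{\otimes r})^{\boxtimes a_r}$ is the same, and the descent via the equivalence \eqref{eqn:V-bundle} is the same. The extra care you take with the linearization bookkeeping (and the alternative codimension-$2$ extension argument) is sound but not needed beyond what the paper already does implicitly.
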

\begin{proof}
	Consider the commutative diagram:

	\begin{equation}
	\begin{tikzcd}
	S^{a_1}\times \cdots\times S^{a_r}\arrow[r, "\widetilde{\iota_{\lambda}}"] \arrow[d, "\varpi'"'] & S^n \arrow[d, "\varpi"] \\
	S^{(a_1)}\times \cdots\times S^{(a_r)} \arrow[r, "\iota_{\lambda}"'] & S^{(n)} 
	\end{tikzcd}
	\end{equation}
	where the top arrow sends $(x_{k,j}; {1\leq k\leq r}, 1\leq j\leq a_k)$ to the sequence with $x_{k,j}$ repeated $k$  times for each $k, j$.
	
	By construction,   $\varpi^*L_{(n)}=L^{\boxtimes n}$, hence 
	\begin{equation}
	\varpi'^{*}\iota_\lambda^*L_{(n)}\cong \widetilde{\iota_{\lambda}}^*(L^{\boxtimes n})\cong L^{\boxtimes a_1}\boxtimes (L^{\otimes 2})^{\boxtimes a_2}\boxtimes\cdots\boxtimes (L^{\otimes r})^{\boxtimes a_r}.
	\end{equation}
	Therefore, by \eqref{eqn:V-bundle},
	\begin{equation}
	\iota_\lambda^*L_{(n)}\cong \varpi'_*(L^{\boxtimes a_1}\boxtimes (L^{\otimes 2})^{\boxtimes a_2}\boxtimes\cdots\boxtimes (L^{\otimes r})^{\boxtimes a_r})^{\mathfrak{S}_{a_1}\times \cdots\times \mathfrak{S}_{a_r}}\cong L_{(a_1)}\boxtimes L^{\otimes 2}_{(a_2)}\boxtimes\cdots\boxtimes L^{\otimes r}_{(a_r)},
	\end{equation}
	as is claimed.
\end{proof}
\begin{rmk}
	\Cref{lemma:PullBackLineBundle} does not generalize to vector bundles of higher rank. The reason is that for a vector bundle $E$ on $S$ of rank $r>1$, the coherent sheaf $E_{(n)}:=\varpi_*(E^{\boxtimes n})^{\mathfrak{S}_n}$ is not locally free but only reflexive (in fact a V-bundle). In general $\iota_{\lambda}^*E_{(n)}$ is not a tensor product of sheaves pulling back from the factors.
\end{rmk}

\subsection{Twisted Hodge groups of symmetric powers}
As a preparation towards the computation of twisted Hodge groups/numbers of the pair $(\Hilb^nS, L_n)$, we first need to compute the twisted Hodge groups/numbers of the pair $(S^{(n)}, L_{(n)})$. Note that $S^{(n)}$ is not smooth but is a V-manifold. The twisted Hodge groups are defined using reflexive differentials; see \Cref{def:TwistedHodgeGroups}.
\begin{prop}
	\label{prop:SymmetricPowers}
	Let $S$ be a compact complex surface and let $L$ be a holomorphic line bundle on $S$. Then for any integer $n\geq 0$, we have a canonical isomorphism of bigraded vector spaces:
	\begin{equation}
	\label{eqn:TwistedHodgeSymmetricPower}
	\bigoplus_{p,q\geq 0} H^{p,q}(S^{(n)}, L_{(n)})x^py^q\cong \Sym^n\left(\bigoplus_{i,j\geq 0}H^{i,j}(S, L)x^iy^j\right).
	\end{equation}
	Here  $\Sym^n$ is taken in the super sense with respect to the grading given by the total degree of $x$ and $y$.
	
	More succinctly, 
	\begin{equation}
	H^{\#,\star}(S^{(n)}, L_{(n)}) \cong  \Sym^n\left(H^{\#,\star}(S, L)\right).
	\end{equation}
\end{prop}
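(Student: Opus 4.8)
The plan is to descend the computation from the smooth product $S^n$ to its symmetric quotient $S^{(n)}=S^n/\mathfrak{S}_n$, and then to invoke the Künneth formula while keeping careful track of the $\mathfrak{S}_n$-action and the Koszul signs it carries.

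First I would rewrite the twisted reflexive differentials through invariant pushforward. Let $\varpi\colon S^n\to S^{(n)}$ be the quotient map; its non-free locus has codimension $\geq 2$, so $S^{(n)}$ is a V-manifold and the equivalence \eqref{eqn:V-bundle} applies. The vector bundle $\Omega^p_{S^n}$ carries a natural $\mathfrak{S}_n$-linearization, and its associated V-bundle $(\varpi_*\Omega^p_{S^n})^{\mathfrak{S}_n}$ restricts to $\Omega^p$ on the regular locus; being reflexive, it therefore coincides with $\Omega^{[p]}_{S^{(n)}}$. Since $L_{(n)}$ is a line bundle with $\varpi^*L_{(n)}\cong L^{\boxtimes n}$, the projection formula and reflexivity (checked over the free locus and extended across the codimension $\geq 2$ complement) yield
\[
\Omega^{[p]}_{S^{(n)}}\otimes L_{(n)}\cong \left(\varpi_*(\Omega^p_{S^n}\otimes L^{\boxtimes n})\right)^{\mathfrak{S}_n}.
\]

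Next I would pass to cohomology. As $\varpi$ is finite it has no higher direct images, and since taking $\mathfrak{S}_n$-invariants over $\CC$ is exact and splits off as a direct summand via the Reynolds operator, it commutes with cohomology. Combining these gives
\[
H^q(S^{(n)},\Omega^{[p]}_{S^{(n)}}\otimes L_{(n)})\cong H^q(S^n,\Omega^p_{S^n}\otimes L^{\boxtimes n})^{\mathfrak{S}_n}.
\]
The Künneth formula, together with $\Omega^p_{S^n}\cong\bigoplus_{p_1+\cdots+p_n=p}\Omega^{p_1}_S\boxtimes\cdots\boxtimes\Omega^{p_n}_S$, then identifies $\bigoplus_{p,q}H^q(S^n,\Omega^p_{S^n}\otimes L^{\boxtimes n})x^py^q$ with the $n$-fold tensor power $V^{\otimes n}$, where $V:=\bigoplus_{i,j}H^{i,j}(S,L)x^iy^j$.

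The one delicate point — and the main obstacle — is the exact sign by which $\mathfrak{S}_n$ acts on $V^{\otimes n}$. I would settle this in the Dolbeault model: a class in $H^j(S,\Omega^i_S\otimes L)$ is represented by an $L$-valued form of type $(i,j)$, hence of total form degree $i+j$, so that transposing two factors of $S^n$ introduces the Koszul sign $(-1)^{(i_a+j_a)(i_b+j_b)}$ governed by the total degrees. Thus $\mathfrak{S}_n$ acts on $V^{\otimes n}$ as the super-permutation action with respect to the grading by $i+j$, and its invariants are by definition the super-symmetric power:
\[
\bigoplus_{p,q}H^q(S^n,\Omega^p_{S^n}\otimes L^{\boxtimes n})^{\mathfrak{S}_n}x^py^q\cong (V^{\otimes n})^{\mathfrak{S}_n}\cong \Sym^n V.
\]
This is precisely \eqref{eqn:TwistedHodgeSymmetricPower}. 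Each step being functorial in $S$ and $L$, the resulting isomorphism is canonical.
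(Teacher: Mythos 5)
Your proposal is correct and follows essentially the same route as the paper: identify $\Omega^{[p]}_{S^{(n)}}\otimes L_{(n)}$ with the invariant pushforward $\varpi_*(\Omega^p_{S^n}\otimes L^{\boxtimes n})^{\mathfrak{S}_n}$ via the V-bundle equivalence, use exactness of $(-)^{\mathfrak{S}_n}$ to descend to cohomology on $S^n$, and conclude by K\"unneth and the identification of invariants with the super symmetric power. Your explicit Dolbeault justification of the Koszul sign is a welcome extra detail that the paper leaves implicit in its super-sign convention.
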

\begin{proof}
	Denote by $\varpi\colon S^{n}\to S^{(n)}$ the natural quotient map. Then $\Omega^{[p]}_{S^{(n)}}\cong(\varpi_*\Omega^p_{S^n})^{\mathfrak{S}_n}$  is a V-bundle on $S^{(n)}$; see for example \cite[Lemma 2.46]{PetersSteenbrink-MHSBook}. Therefore, $(\varpi^{*}\Omega^{[p]}_{S^{(n)}})^{\vee \vee}\cong \Omega_{S^n}^p$; see \eqref{eqn:V-bundle}. Since $\varpi^*L_{(n)}\cong L^{\boxtimes n}$, we have 
	\begin{equation}
	\varpi^*(\Omega^{[p]}_{S^{(n)}}\otimes L_{(n)})^{\vee \vee}\cong \Omega_{S^n}^p\otimes L^{\boxtimes n}.
	\end{equation}
	This implies that (see \eqref{eqn:V-bundle})
	\begin{equation}
	\Omega^{[p]}_{S^{(n)}}\otimes L_{(n)}\cong \varpi_*(\Omega_{S^n}^p\otimes L^{\boxtimes n})^{\mathfrak{S}_n}.
	\end{equation}
	Now we can compute the cohomology group that we are interested in:
	\begin{align*}
	& H^{q}(S^{(n)}, \Omega^{[p]}_{S^{(n)}}\otimes L_{(n)})\\
	\cong\quad & H^q(S^n, \Omega_{S^n}^p\otimes L^{\boxtimes n})^{\mathfrak{S}_n}\\
	\cong \quad &\left(\bigoplus_{i_1+\cdots i_n=p} H^q(S^n, \boxtimes_{k=1}^n(\Omega^{i_k}_S\otimes L))\right)^{\mathfrak{S}_n}\\
	\cong\quad  &\left(\bigoplus_{\substack{i_1+\cdots i_n=p\\j_1+\cdots j_n=q}} \bigotimes_{k=1}^n H^{j_k}(S, \Omega^{i_k}_S\otimes L)\right)^{\mathfrak{S}_n},
	\end{align*}
	where the first isomorphism uses the exactness of the functor $-^{\mathfrak{S}_n}$, the second isomorphism uses $\Omega^p_{S^n}\cong \bigwedge^p(p_1^*\Omega^1_S\oplus \cdots\oplus p_n^*\Omega^1_S)$, and the last isomorphism is the K\"unneth formula.
	Taking the direct sum over all $p, q\geq 0$, this implies that 
	\begin{align*}
	&\bigoplus_{p,q\geq 0} H^{p,q}(S^{(n)}, L_{(n)})x^py^q\\
	\cong \quad&
	\left(\bigoplus_{\substack{i_1,\cdots,i_n\\j_1,\cdots,j_n}} \bigotimes_{k=1}^n H^{i_k, j_k}(S, L)x^{i_k}y^{j_k}\right)^{\mathfrak{S}_n}\\
	\cong \quad&\left(\bigotimes_{k=1}^n \left(\bigoplus_{i,j\geq 0}H^{i,j}(S, L)x^iy^j\right)\right)^{\mathfrak{S}_n}\\
	\cong \quad&\Sym^n\left(\bigoplus_{i,j\geq 0}H^{i,j}(S, L)x^iy^j\right).
	\end{align*}
	This is exactly \eqref{eqn:TwistedHodgeSymmetricPower}.
\end{proof}

\subsection{Isomorphism of Hodge modules and of $\mathscr{D}$-modules}

Let the notation be as above: $X=\Hilb^nS$, $Y=S^{(n)}$.

\begin{prop}
	\label{prop:Isom-HodgeModules}
	We have an isomorphism of pure polarizable Hodge modules of weight $2n$ on $Y$:
	\begin{equation}
	\label{eqn:IsomHodgeModule}
	\R\pi_*\QQ^H_X[2n]\cong \bigoplus_{\substack{\lambda\dashv n\\\lambda=(1^{a_1}\dots r^{a_{r}})}} \iota_{\lambda,*} \ICH_{S^{(a_1)}}\boxtimes \cdots \boxtimes \ICH_{S^{(a_r)}}(\QQ(|\lambda|-n)).
	\end{equation}
\end{prop}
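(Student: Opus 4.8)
The plan is to read off the decomposition directly from the semismall Hodge-module decomposition theorem and then to rewrite each summand using the explicit normalization of the strata closures introduced above.

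First I would apply \Cref{thm:BorhoMacPherson-HM} to the Hilbert--Chow morphism $\pi\colon X=\Hilb^nS\to Y=S^{(n)}$. Here $X$ is smooth of dimension $2n$, the morphism $\pi$ is proper and semismall, and, as recorded in the stratification, every stratum $Y_\lambda$ is relevant with associated variation of Hodge structure the Tate object $L_\lambda=\QQ(|\lambda|-n)$ (the fibres $F_\lambda$ being irreducible, the monodromy local system $\mathbb{L}_\lambda$ is trivial). This yields at once
\[
\R\pi_*\QQ^H_X[2n]\cong \bigoplus_{\lambda\dashv n} i_{\lambda,*}\ICH_{\overline{Y_\lambda}}\bigl(\QQ(|\lambda|-n)\bigr),
\]
and reduces the proposition to identifying each summand $i_{\lambda,*}\ICH_{\overline{Y_\lambda}}(\QQ(|\lambda|-n))$ with the external product $\iota_{\lambda,*}\bigl(\ICH_{S^{(a_1)}}\boxtimes\cdots\boxtimes\ICH_{S^{(a_r)}}(\QQ(|\lambda|-n))\bigr)$.

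Next I would pass to the normalization $\nu_\lambda\colon S^{(a_1)}\times\cdots\times S^{(a_r)}\to\overline{Y_\lambda}$, which is finite and birational and an isomorphism over $Y_\lambda$. Applying \Cref{lemma:BirationalFinite} with the Tate coefficient $\QQ(|\lambda|-n)$ (which pulls back to the same Tate object on $\nu_\lambda^{-1}(Y_\lambda)$) gives
\[
\ICH_{\overline{Y_\lambda}}\bigl(\QQ(|\lambda|-n)\bigr)\cong \nu_{\lambda,*}\,\ICH_{S^{(a_1)}\times\cdots\times S^{(a_r)}}\bigl(\QQ(|\lambda|-n)\bigr).
\]
Since $\iota_\lambda=i_\lambda\circ\nu_\lambda$ and pushforward is functorial, composing with $i_{\lambda,*}$ turns the right-hand side into $\iota_{\lambda,*}\,\ICH_{S^{(a_1)}\times\cdots\times S^{(a_r)}}(\QQ(|\lambda|-n))$, so it only remains to factor this IC Hodge module as an external product.

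For that last step I would use that each $S^{(a_k)}$ is a V-manifold, so by the identification of the IC Hodge module with the constant Hodge module its IC module is $\QQ^H_{S^{(a_k)}}[2a_k]$; a product of V-manifolds is again a V-manifold, and constant Hodge modules are compatible with external products, whence $\ICH_{S^{(a_1)}\times\cdots\times S^{(a_r)}}\cong\ICH_{S^{(a_1)}}\boxtimes\cdots\boxtimes\ICH_{S^{(a_r)}}$, the single global Tate twist $\QQ(|\lambda|-n)$ distributing across the box product. Summing over all $\lambda\dashv n$ then produces the asserted isomorphism, and the weights match since $\ICH_{\overline{Y_\lambda}}(L_\lambda)$ has weight $c_\lambda+2|\lambda|=2n$. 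The point requiring the most care is precisely this factorization: one must verify that Saito's external-product (Künneth) functor for Hodge modules is compatible with the intermediate-extension construction and with the placement of the global Tate twist, whereas the remaining steps are direct applications of \Cref{thm:BorhoMacPherson-HM} and \Cref{lemma:BirationalFinite} together with the normalization description of the strata.
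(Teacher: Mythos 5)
Your proposal is correct and follows essentially the same route as the paper: apply \Cref{thm:BorhoMacPherson-HM} to the semismall Hilbert--Chow morphism (all strata relevant, trivial local systems by irreducibility of the Brian\c{c}on fibres), then use \Cref{lemma:BirationalFinite} for the finite birational normalization $\nu_\lambda$ and factor the IC Hodge module of the product as an external product. The only difference is that you spell out the justification of $\ICH_{S^{(a_1)}\times\cdots\times S^{(a_r)}}\cong\ICH_{S^{(a_1)}}\boxtimes\cdots\boxtimes\ICH_{S^{(a_r)}}$ via the V-manifold identification with constant Hodge modules, a step the paper uses without comment.
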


\begin{proof}
	This is \cite[Theorem 4]{GottscheSoergel}, we include the proof for the convenience of the reader.   Applying \Cref{thm:BorhoMacPherson-HM} to the semismall morphism $\pi\colon X\to Y$, we obtain an isomorphism in $\HM^\p(Y,2n)$:
	\begin{equation}
	\label{eqn:IsomHM-1}
	\R\pi_*\QQ^H_X[2n]\cong \bigoplus_{\lambda\dashv n }i_{\lambda,*}\ICH_{\overline{Y_{\lambda}}}(\QQ(|\lambda|-n)),
	\end{equation}
	where $\ICH_{\overline{Y_{\alpha}}}(\QQ(|\lambda|-n))$ is the unique pure polarizable Hodge module with strict support $\overline{Y_{\alpha}}$ that restricts to the weight $c_\lambda$ variation of Hodge structure $\QQ(|\lambda|-n)$ on $Y_{\alpha}$.
	
	For each partition $\lambda$ of $n$, write $\lambda=(1^{a_1}\dots r^{a_{r}})$ as before,  since the normalization $\nu_\lambda\colon S^{(a_1)}\times \cdots\times S^{(a_r)}\to \overline{Y_{\lambda}}$  is finite and birational, by \Cref{lemma:BirationalFinite}, 
	\begin{equation}
	\label{eqn:IsomHM-2}
	\nu_{\lambda,*}\ICH_{S^{(a_1)}\times \cdots \times S^{(a_r)}}(\QQ(|\lambda|-n))\cong \ICH_{\overline{Y_{\lambda}}}(\QQ(|\lambda|-n)).
	\end{equation}
	Since $\iota_\lambda:=i_{\lambda}\circ \nu_{\lambda}$, combining \eqref{eqn:IsomHM-1} and \eqref{eqn:IsomHM-2}, and using $\ICH_{S^{(a_1)}\times \cdots \times S^{(a_r)}}\cong \ICH_{S^{(a_1)}}\boxtimes \cdots \boxtimes \ICH_{S^{(a_r)}}$,  we obtain \eqref{eqn:IsomHodgeModule}.
\end{proof}

Taking the underlying filtered $\D$-modules on both sides of \eqref{eqn:IsomHodgeModule} in \Cref{prop:Isom-HodgeModules}, we get the following result.

\begin{cor}
	\label{cor:IsomDModules}
	Notation is as above. Let $\ICH_{S^{(a)}}$ denote the underlying right $\D$-module of the Hodge module $\ICH_{S^{(a)}}(\QQ)$. We have an isomorphism of filtered right $\D_Y$-modules:
	\begin{equation}
	\label{eqn:IsomDModules}
	\pi_+(\omega_X, F_\bullet)\cong  \bigoplus_{\substack{\lambda\dashv n\\\lambda=(1^{a_1}\dots r^{a_{r}})}} \iota_{\lambda,+}( \ICH_{S^{(a_1)}}\boxtimes \cdots \boxtimes \ICH_{S^{(a_r)}}, F_{\bullet-|\lambda|+n}),
	\end{equation}
	where the filtration on the left-hand side is the usual one (see \eqref{eqn:TrivialHM}), and the filtration on the right-hand side is the tensor product of the usual ones shifted by $|\lambda|-n$.
\end{cor}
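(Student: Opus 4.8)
The plan is to deduce \eqref{eqn:IsomDModules} from \eqref{eqn:IsomHodgeModule} simply by applying the functor that sends a pure Hodge module to its underlying filtered right $\D$-module. This functor is faithful and compatible with the operations appearing in \Cref{prop:Isom-HodgeModules}, so the isomorphism of Hodge modules will transport termwise to an isomorphism of filtered $\D_Y$-modules, once each constituent on both sides is identified. Concretely, I would translate \eqref{eqn:IsomHodgeModule} piece by piece through the dictionary: constant Hodge module $\leftrightarrow$ $(\omega_X, F_\bullet)$, direct image $\R\pi_*\leftrightarrow \pi_+$, external product of Hodge modules $\leftrightarrow$ external tensor product of filtered $\D$-modules, and Tate twist $\leftrightarrow$ shift of filtration.

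For the left-hand side, recall from \eqref{eqn:TrivialHM} that the underlying filtered $\D_X$-module of $\QQ^H_X[2n]$ is exactly $(\omega_X, F_\bullet)$ with its standard filtration (here $\dim X=2n$). By Saito's construction of the direct image of Hodge modules, the underlying filtered $\D$-module of $\R\pi_*\QQ^H_X[2n]$ is the filtered $\D$-module direct image $\pi_+(\omega_X, F_\bullet)$; this compatibility is precisely what is encoded by the strictness in \Cref{thm:Strictness}. Since $\pi$ is semismall with $X$ smooth, the Borho--MacPherson/Saito decomposition places $\R\pi_*\QQ^H_X[2n]$ in a single degree, so $\pi_+(\omega_X, F_\bullet)$ is concentrated in one degree and may be treated as a genuine filtered $\D$-module rather than an object of a filtered derived category; thus no derived subtleties arise.

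For the right-hand side, the underlying filtered $\D$-module functor carries $\iota_{\lambda,*}$ to $\iota_{\lambda,+}$ and external products of Hodge modules to external tensor products, and sends each factor $\ICH_{S^{(a_k)}}(\QQ)$ to its underlying filtered $\D$-module $\ICH_{S^{(a_k)}}$ with the IC filtration. It then remains only to account for the Tate twist $\QQ(|\lambda|-n)$. Writing $m=|\lambda|-n$, a twist by $\QQ(m)$ leaves the underlying $\D$-module unchanged and replaces the filtration $F_\bullet$ by $F_{\bullet-m}$, i.e. by $F_{\bullet-|\lambda|+n}$, which is exactly the filtration on the right-hand side of \eqref{eqn:IsomDModules}. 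The weight bookkeeping is a consistency check: the external product has weight $2|\lambda|$ and the twist contributes $-2m=c_\lambda$, summing to $2n$, matching the weight of the left-hand side.

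The one genuinely delicate point, and hence the thing I would verify most carefully, is the \emph{direction} of the filtration shift induced by the Tate twist. I would pin it down directly from the conventions in the excerpt: using $F_p(V^H[d])=\omega_X\otimes F^{-p-d}\mathcal{V}$ together with $F^p(V(m))=F^{p+m}V$, one computes $F_p(V(m)^H[d])=\omega_X\otimes F^{-(p-m)-d}\mathcal{V}=F_{p-m}(V^H[d])$, which fixes the shift as $F_{\bullet-m}$ with $m=|\lambda|-n$. Everything else is the formal functoriality of the underlying filtered $\D$-module functor applied to the isomorphism \eqref{eqn:IsomHodgeModule} of \Cref{prop:Isom-HodgeModules}.
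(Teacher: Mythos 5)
Your proposal is correct and follows exactly the paper's route: the paper's proof of \Cref{cor:IsomDModules} consists of the single observation that one takes the underlying filtered $\D$-modules on both sides of \eqref{eqn:IsomHodgeModule}, and your careful unpacking of that step (the dictionary for constant module, direct image, external product, and the filtration shift $F_{\bullet-|\lambda|+n}$ induced by the Tate twist $\QQ(|\lambda|-n)$) is accurate, including the sign of the shift.
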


\subsection{Putting everything together}

The following proposition contains the key computation of the proof.

\begin{prop}
	\label{prop:PushforwardOfOmega}
	We have a canonical isomorphism in $\operatorname{D^b_{coh}}(S^{(n)})$:
	\begin{equation}
	\label{eqn:RpiOmegap}
	\R\pi_*\Omega_{\Hilb^nS}^p\cong \bigoplus_{\substack{\lambda\dashv n\\\lambda=(1^{a_1}\dots r^{a_{r}})}}\bigoplus_{\sum i_k=p+|\lambda|-n}\iota_{\lambda,*}\left(\Omega_{S^{(a_1)}}^{[i_1]}\boxtimes\cdots\boxtimes\Omega_{S^{(a_r)}}^{[i_r]}[|\lambda|-n]\right).
	\end{equation}
\end{prop}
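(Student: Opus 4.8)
The plan is to obtain \eqref{eqn:RpiOmegap} by applying the functor $\gr^F_{-p}\circ\DR$ to the $\D$-module decomposition of \Cref{cor:IsomDModules} and commuting it past the pushforwards via Saito's strictness theorem. Since $X=\Hilb^nS$ is smooth of dimension $2n$, the underlying right $\D_X$-module of the constant Hodge module is $\omega_X$ with its standard filtration, and by \eqref{eqn:grDRomega} its graded de Rham pieces are $\gr^F_{-p}\DR(\omega_X)\cong\Omega_X^p[2n-p]$. Applying $\R\pi_*$ and invoking \Cref{thm:Strictness} — in the singular-target form of \Cref{rmk:SingularTarget}, valid because $Y=S^{(n)}$ is embeddable and $\pi$ is projective — yields $(\R\pi_*\Omega_X^p)[2n-p]\cong\gr^F_{-p}\DR(\pi_+\omega_X)$.

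First I would substitute the decomposition \eqref{eqn:IsomDModules} for $\pi_+(\omega_X,F_\bullet)$. Since $\gr^F_{-p}\circ\DR$ is additive, it suffices to treat each summand $\iota_{\lambda,+}(\mathcal{N}_\lambda,F_{\bullet-|\lambda|+n})$, where I write $\mathcal{N}_\lambda:=\ICH_{S^{(a_1)}}\boxtimes\cdots\boxtimes\ICH_{S^{(a_r)}}$. As $\iota_\lambda$ is finite, hence projective, and $\R\iota_{\lambda,*}=\iota_{\lambda,*}$, a second application of \Cref{thm:Strictness} commutes $\gr^F_{-p}\DR$ through $\iota_{\lambda,+}$, giving $\gr^F_{-p}\DR\bigl(\iota_{\lambda,+}(\mathcal{N}_\lambda,F_{\bullet-|\lambda|+n})\bigr)\cong\iota_{\lambda,*}\,\gr^F_{-p}\DR(\mathcal{N}_\lambda,F_{\bullet-|\lambda|+n})$. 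The filtration shift by $|\lambda|-n$ in \Cref{cor:IsomDModules} induces the same shift on the de Rham filtration, so that $\gr^F_{-p}$ of the shifted filtration equals $\gr^F_{-m}$ of the standard one, with $m:=p+|\lambda|-n$.

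Next I would compute $\gr^F_{-m}\DR(\mathcal{N}_\lambda)$. The product $S^{(a_1)}\times\cdots\times S^{(a_r)}$ is a V-manifold of dimension $2|\lambda|$, and $\mathcal{N}_\lambda$ is the external product of the $\ICH_{S^{(a_k)}}$; using the compatibility of $\gr^F\DR$ with external products of filtered $\D$-modules (the induced filtration being the convolution of the factors), I obtain $\gr^F_{-m}\DR(\mathcal{N}_\lambda)\cong\bigoplus_{i_1+\cdots+i_r=m}\gr^F_{-i_1}\DR(\ICH_{S^{(a_1)}})\boxtimes\cdots\boxtimes\gr^F_{-i_r}\DR(\ICH_{S^{(a_r)}})$. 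Applying \Cref{prop:ReflexiveDiffVmanifold} to each V-manifold $S^{(a_k)}$ of dimension $2a_k$ identifies the $k$-th factor with $\Omega_{S^{(a_k)}}^{[i_k]}[2a_k-i_k]$, so the external product equals $\bigl(\Omega_{S^{(a_1)}}^{[i_1]}\boxtimes\cdots\boxtimes\Omega_{S^{(a_r)}}^{[i_r]}\bigr)[2|\lambda|-m]$, since $\sum_k(2a_k-i_k)=2|\lambda|-m$.

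Finally I would collect the degree shifts. Undoing the shift $[2n-p]$ on the left, the total shift on the $\lambda$-summand is $[2|\lambda|-m+p-2n]$, which equals $[|\lambda|-n]$ after substituting $m=p+|\lambda|-n$, while the summation condition $\sum_k i_k=m$ becomes precisely $\sum_k i_k=p+|\lambda|-n$; this reproduces \eqref{eqn:RpiOmegap}. The two steps I expect to require the most care are the bookkeeping of the two independent shifts — the filtration shift coming from \Cref{cor:IsomDModules} and the de Rham degree shift — and a clean justification of the Künneth-type compatibility of $\gr^F\DR$ with external products of filtered $\D$-modules on (products of) V-manifolds; the remainder is a formal concatenation of the cited results.
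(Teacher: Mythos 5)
Your proposal is correct and follows essentially the same route as the paper's proof: apply $\gr^F_{-p}\circ\DR$ to the decomposition of \Cref{cor:IsomDModules}, use Saito's strictness (\Cref{thm:Strictness} and \Cref{rmk:SingularTarget}) on both sides, the Künneth compatibility of $\gr^F\DR$ with external products, and \Cref{prop:ReflexiveDiffVmanifold} to identify the graded pieces with reflexive differentials, with the same shift bookkeeping. The two points you flag as delicate (the interaction of the filtration shift with the de Rham degree shift, and the external-product compatibility) are handled in the paper exactly as you describe, so no changes are needed.
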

\begin{proof}
	We apply the functor $\gr^F_{-p}\circ \DR$ to both sides of \eqref{eqn:IsomDModules} in \Cref{cor:IsomDModules}.    
	On the left-hand side, by Saito's \Cref{thm:Strictness} on strictness of direct images (and \Cref{rmk:SingularTarget}), together with \eqref{eqn:grDRomega}, we have
	\begin{equation}
	\gr^F_{-p}\DR(\pi_+\omega_X)\cong \R\pi_*\gr^F_{-p}\DR(\omega_X)\cong \R\pi_*(\Omega_X^p[2n-p]).
	\end{equation}
	Similarly, for the right-hand side, for any partition $\lambda=(1^{a_1}2^{a_2}\cdots r^{a_r})$ of $n$, we have isomorphisms
	\begin{align*}
	& \gr^F_{-p}\DR (\iota_{\lambda,+}( \ICH_{S^{(a_1)}}\boxtimes \cdots \boxtimes \ICH_{S^{(a_r)}}, F_{\bullet-|\lambda|+n}))\\
	\cong \quad&\gr^F_{-p-|\lambda|+n}\DR (\iota_{\lambda,+}( \ICH_{S^{(a_1)}}\boxtimes \cdots \boxtimes \ICH_{S^{(a_r)}}))\\
	\cong\quad &\iota_{\lambda,*}(\gr^F_{-p-|\lambda|+n}\DR ( \ICH_{S^{(a_1)}}\boxtimes \cdots \boxtimes \ICH_{S^{(a_r)}}))\\
	\cong \quad&\iota_{\lambda,*}\left(\bigoplus_{\sum i_k=p+|\lambda|-n}\gr^F_{-i_1}\DR ( \ICH_{S^{(a_1)}})\boxtimes \cdots \boxtimes \gr^F_{-i_r}\DR ( \ICH_{S^{(a_r)}})\right)\\
	\cong\quad & \iota_{\lambda,*}\left(\bigoplus_{\sum i_k=p+|\lambda|-n}\Omega_{S^{(a_1)}}^{[i_1]}[2a_1-i_1]\boxtimes\cdots\boxtimes\Omega_{S^{(a_r)}}^{[i_r]}[2a_r-i_r]\right)\\
	\cong \quad& \bigoplus_{\sum i_k=p+|\lambda|-n}\iota_{\lambda,*}\left(\Omega_{S^{(a_1)}}^{[i_1]}\boxtimes\cdots\boxtimes\Omega_{S^{(a_r)}}^{[i_r]}[|\lambda|-p+n]\right)
	\end{align*}
	where the second isomorphism uses Saito's \Cref{thm:Strictness} (and \Cref{rmk:SingularTarget}), the fourth isomorphism follows from \Cref{prop:ReflexiveDiffVmanifold}.
	
	According to the above computations, applying the functor $\gr^F_{-p}\circ \DR$ to both sides of \eqref{eqn:IsomDModules} yields the claimed isomorphism.
\end{proof}

\begin{prop}
	\label{prop:TwistedHodgeGroupsOfHilb}
	For any nonnegative integers $p, q, n$, we have a canonical isomorphism
	\begin{equation}
	H^{p,q}(\Hilb^nS, L_n)\cong \bigoplus_{\substack{\lambda\dashv n\\\lambda=(1^{a_1}\dots r^{a_{r}})}} \bigoplus_{\substack{\sum i_k=p+|\lambda|-n\\\sum j_k=q+|\lambda|-n}}\bigotimes_{k=1}^r H^{i_k, j_k}(S^{(a_k)},  L^{\otimes k}_{(a_k)}).
	\end{equation}
\end{prop}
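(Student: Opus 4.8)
The plan is to deduce this proposition from the sheaf-level computation already packaged in \Cref{prop:PushforwardOfOmega}, feeding it through two applications of the projection formula together with the Künneth formula. The only genuinely new inputs beyond \Cref{prop:PushforwardOfOmega} are the bookkeeping of cohomological degree shifts and the identification of the pulled-back line bundle from \Cref{lemma:PullBackLineBundle}.

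First I would use that $L_n=\pi^*L_{(n)}$ with $L_{(n)}$ a line bundle, so the projection formula gives a canonical isomorphism in $\operatorname{D^b_{coh}}(S^{(n)})$,
\begin{equation*}
\R\pi_*(\Omega_{\Hilb^nS}^p\otimes L_n)\cong \R\pi_*\Omega_{\Hilb^nS}^p\otimes L_{(n)}.
\end{equation*}
Taking hypercohomology on $Y=S^{(n)}$ of both sides identifies the left-hand side with $H^{p,q}(\Hilb^nS,L_n)$, so it remains to compute $H^q\bigl(S^{(n)},\R\pi_*\Omega^p_{\Hilb^nS}\otimes L_{(n)}\bigr)$ via \Cref{prop:PushforwardOfOmega}.

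Next I would tensor the right-hand side of \eqref{eqn:RpiOmegap} by $L_{(n)}$ term by term. Each summand is $\iota_{\lambda,*}$ of a sheaf, and since $\iota_\lambda$ is finite (so that $\iota_{\lambda,*}=\R\iota_{\lambda,*}$ is exact) the projection formula applies again,
\begin{equation*}
\iota_{\lambda,*}\mathcal{G}\otimes L_{(n)}\cong \iota_{\lambda,*}\bigl(\mathcal{G}\otimes \iota_\lambda^*L_{(n)}\bigr),
\end{equation*}
where $\iota_\lambda^*L_{(n)}\cong L_{(a_1)}\boxtimes L^{\otimes 2}_{(a_2)}\boxtimes\cdots\boxtimes L^{\otimes r}_{(a_r)}$ by \Cref{lemma:PullBackLineBundle}. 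As this factorization matches the external tensor product, the twist distributes across the box product, replacing each factor $\Omega^{[i_k]}_{S^{(a_k)}}$ by $\Omega^{[i_k]}_{S^{(a_k)}}\otimes L^{\otimes k}_{(a_k)}$.

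Finally I would take $H^q(S^{(n)},-)$. Using again that $\iota_\lambda$ is finite, hypercohomology on $S^{(n)}$ of $\iota_{\lambda,*}$ of a complex equals the hypercohomology of that complex on $S^{(a_1)}\times\cdots\times S^{(a_r)}$; the shift $[|\lambda|-n]$ converts $H^q$ into $H^{q+|\lambda|-n}$; and the Künneth formula splits this into $\bigoplus_{\sum j_k=q+|\lambda|-n}\bigotimes_k H^{j_k}(S^{(a_k)},\Omega^{[i_k]}_{S^{(a_k)}}\otimes L^{\otimes k}_{(a_k)})$, which by definition equals $\bigoplus_{\sum j_k=q+|\lambda|-n}\bigotimes_k H^{i_k,j_k}(S^{(a_k)},L^{\otimes k}_{(a_k)})$. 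Summing over the partitions $\lambda$ and the index sets $\sum i_k=p+|\lambda|-n$ then reproduces exactly the claimed formula. I expect the main (though modest) obstacle to be the careful tracking of the shift $[|\lambda|-n]$ and the two index constraints $\sum i_k=p+|\lambda|-n$ and $\sum j_k=q+|\lambda|-n$ through the chain of isomorphisms; everything else is a formal consequence of \Cref{prop:PushforwardOfOmega}, \Cref{lemma:PullBackLineBundle}, the projection formula, and Künneth.
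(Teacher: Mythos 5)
Your proposal is correct and follows essentially the same route as the paper's own proof: the projection formula for $\pi$ reduces the computation to $H^q(S^{(n)},\R\pi_*\Omega^p_{\Hilb^nS}\otimes L_{(n)})$, then \Cref{prop:PushforwardOfOmega}, the projection formula along the finite maps $\iota_\lambda$ combined with \Cref{lemma:PullBackLineBundle}, and the K\"unneth formula yield the stated decomposition. The degree-shift bookkeeping you flag is handled in the paper exactly as you describe, with $[|\lambda|-n]$ turning $H^q$ into $H^{q+|\lambda|-n}$.
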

\begin{proof}

	Still denote $X=\Hilb^nS$.  We have the following chain of isomorphisms:
	\begin{align}
	\begin{split}
	& H^q(X, \Omega^p_{X}\otimes L_n)\\
	\cong \quad& H^q(Y, \R\pi_*\Omega_X^p\otimes L_{(n)})\\
	\cong \quad& \bigoplus_{\lambda\dashv n}\bigoplus_{\sum i_k=p+|\lambda|-n}H^{q+|\lambda|-n}(S^{(a_1)}\times \cdots\times S^{(a_r)}, \Omega_{S^{(a_1)}}^{[i_1]}\boxtimes\cdots\boxtimes\Omega_{S^{(a_r)}}^{[i_r]}\otimes \iota_{\lambda}^*L_{(n)})\\
	\cong \quad& \bigoplus_{\lambda\dashv n}\bigoplus_{\sum i_k=p+|\lambda|-n}H^{q+|\lambda|-n}(S^{(a_1)}\times \cdots\times S^{(a_r)}, (\Omega_{S^{(a_1)}}^{[i_1]}\boxtimes\cdots\boxtimes\Omega_{S^{(a_r)}}^{[i_r]})\otimes (L_{(a_1)}\boxtimes \cdots \boxtimes L^r_{(a_r)})).\\
	\cong\quad& \bigoplus_{\lambda\dashv n}\bigoplus_{\sum i_k=p+|\lambda|-n} H^{q+|\lambda|-n}\left(\prod_{k=1}^rS^{(a_k)}, \boxtimes_{k=1}^r (\Omega_{S^{(a_k)}}^{[i_k]}\otimes L^k_{(a_k)})\right).\\
	\cong \quad&\bigoplus_{\lambda\dashv n}\bigoplus_{\substack{\sum i_k=p+|\lambda|-n\\\sum j_k=q+|\lambda|-n}}\bigotimes_{k=1}^r H^{j_k}(S^{(a_k)}, \Omega^{[i_k]}_{S^{(a_k)}}\otimes L^k_{(a_k)}),
	\end{split}
	\end{align}
	where the first isomorphism uses the definition $L_n=\pi^*L_{(n)}$ and the projection formula, the second isomorphism uses \eqref{eqn:RpiOmegap} in \Cref{prop:PushforwardOfOmega}, the third isomorphism follows from \Cref{lemma:PullBackLineBundle}, and the last isomorphism is by the K\"unneth formula.
\end{proof}

Now we can conclude:

\begin{proof}[Proof of \Cref{thm:main-cohomology}]
	By \Cref{prop:TwistedHodgeGroupsOfHilb}, the left-hand side of  \eqref{eqn:main-cohomology} can be computed as follows:
	\begin{align*}
	&\bigoplus_{n\geq 0}\bigoplus_{p,q\geq 0} H^{p,q}(\Hilb^nS, L_n)x^py^qt^n\\
	\cong\quad& \bigoplus_{n\geq 0}\bigoplus_{p,q\geq 0}\left(\bigoplus_{\substack{\lambda\dashv n\\\lambda=(1^{a_1}\dots r^{a_{r}})}} \bigoplus_{\substack{\sum i_k=p+|\lambda|-n\\\sum j_k=q+|\lambda|-n}}\bigotimes_{k=1}^r H^{i_k, j_k}(S^{(a_k)},  L^k_{(a_k)})\right)x^py^qt^n.\\
	\cong\quad& \bigoplus_{r\geq 0}\bigoplus_{a_1, \dots, a_r\geq 0}\bigoplus_{\substack{i_1, \dots,i_r\\j_1,\dots, j_r}}\left(\bigotimes_{k=1}^r H^{i_k, j_k}(S^{(a_k)},  L^k_{(a_k)})\right)x^{\sum i_k+\sum(k-1)a_k}y^{\sum j_k+\sum (k-1)a_k}t^{\sum ka_k}.\\
	\cong\quad& \bigoplus_{r\geq 0}\bigoplus_{a_1, \dots, a_r\geq 0}\bigoplus_{\substack{i_1, \dots,i_r\\j_1,\dots, j_r}}\bigotimes_{k=1}^r \left(H^{i_k, j_k}(S^{(a_k)},  L^k_{(a_k)})x^{i_k}y^{j_k}(x^{k-1}y^{k-1}t^k)^{a_k}\right).\\
	\cong\quad& \bigoplus_{r\geq 0}\bigoplus_{a_1, \dots, a_r\geq 0}\bigotimes_{k=1}^r \left(\bigoplus_{i_k, j_k}H^{i_k, j_k}(S^{(a_k)},  L^k_{(a_k)})x^{i_k}y^{j_k}(x^{k-1}y^{k-1}t^k)^{a_k}\right).\\
	\end{align*}
	where the first isomorphism is by \Cref{prop:TwistedHodgeGroupsOfHilb}, the second isomorphism is obtained by a change of summation order and by noting the following numerical identities: $|\lambda|=\sum_k a_k$, $n=\sum_k ka_k$, and $p=(\sum_k i_k)+n-|\lambda|=\sum_k (i_k+(k-1)a_k)$, $q=(\sum_k j_k) +n-|\lambda|=\sum_k (j_k+(k-1)a_k)$.
	
	The right-hand side of \eqref{eqn:main-cohomology} can be computed using \Cref{prop:SymmetricPowers}:
	\begin{align*}
	&\Sym^\bullet \left(\bigoplus_{k\geq 1}\bigoplus_{p,q\geq 0}H^{p,q}(S, L^k)x^{p+k-1}y^{p+k-1}t^k\right)\\
	\cong\quad & \bigoplus_{r\geq 0}\bigoplus_{a_1, \dots, a_r\geq 0}\bigotimes_{k=1}^r \Sym^{a_k}\left(\bigoplus_{p,q}H^{p,q}(S, L^k)x^{p+k-1}y^{q+k-1}t^k\right)\\
	\cong \quad& \bigoplus_{r\geq 0}\bigoplus_{a_1, \dots, a_r\geq 0}\bigotimes_{k=1}^r \left(\Sym^{a_k}\left(\bigoplus_{p,q}H^{p,q}(S, L^k)x^{p}y^{q}\right)\cdot(x^{k-1}y^{k-1}t^k)^{a_k}\right)\\
	\cong\quad & \bigoplus_{r\geq 0}\bigoplus_{a_1, \dots, a_r\geq 0}\bigotimes_{k=1}^r \left(\bigoplus_{p,q}H^{p,q}(S^{(a_k)}, L_{(a_k)}^k)x^{p}y^{q}\cdot(x^{k-1}y^{k-1}t^k)^{a_k}\right)\\
	\end{align*}
	where \Cref{prop:SymmetricPowers} is used in the last isomorphism.
	
	Comparing the two end results above, we conclude that \eqref{eqn:main-cohomology} holds.
\end{proof}

\begin{proof}[Proof of \Cref{thm:main-numbers}]
	This follows from \Cref{thm:main-cohomology} by applying \cite[Lemma 3.3]{BFK}.
\end{proof}

\begin{proof}[Proof of \Cref{cor:Frolicher}]
	Given any compact complex surface $S$, G\"ottsche's formula \eqref{eqn:BettiNumbers} for Betti numbers holds by de Cataldo--Migliorini \cite[Theorem 5.2.1]{deCataldoMigliorini-DouadySpace-2000}, and G\"ottsche--Soergel's formula for Hodge numbers holds by   \Cref{cor:HodgeNumbers}. Therefore for any integer $i$, we have
	\begin{equation}
	\operatorname{b}_i(\Hilb^nS)=\sum_{p+q=i} h^{p,q}(\Hilb^nS).
	\end{equation}
	This implies that the Fr\"olicher spectral sequence dengerates at $E_1$-page.
\end{proof}
\begin{proof}[Proof of \Cref{cor:BFK-HH}]
	By Schuhmacher \cite{Schuhmacher-2004}, the Hochschild--Kostant--Rosenberg isomorphism holds for any complex manifold. Therefore for any compact complex surface and any holomorphic line bundle $L$ on $S$, we have an isomorphism for any $i, n$:
	\begin{equation}
	HH_i(\Hilb^nS, L_n)\cong \bigoplus_{q-p=i} H^{p,q}(\Hilb^nS, L_n).
	\end{equation}
	Consequently, 
	\begin{equation}
	\begin{split}
	&\bigoplus_{n\geq 0}\bigoplus_{i} HH_i(\Hilb^nS, L_n)y^it^n\\
	\cong\quad&\bigoplus_{n\geq 0}\bigoplus_{p,q} H^{p,q}(\Hilb^nS, L_n)y^{q-p}t^n\\
	\cong\quad& \Sym^\bullet \left(\bigoplus_{k\geq 1}\bigoplus_{i}\bigoplus_{q-p=i}H^{p,q}(S, L^{\otimes k})y^it^k\right)\\
	\cong \quad& \Sym^\bullet \left(\bigoplus_{k\geq 1}\bigoplus_{i}HH_i(S, L^{\otimes k})y^it^k\right),
	\end{split}
	\end{equation}
	where the second isomorphism follows from \eqref{eqn:main-cohomology} by setting $x=y^{-1}$. Now \eqref{eqn:HHCoeff} is proved for any compact complex surface $S$.
	
	To deduce \eqref{eqn:HH} , \eqref{eqn:HH*} and  \eqref{eqn:HS} for any compact complex surface $S$, it suffices to  further specialize to $L=\mathscr{O}_S$, $\omega_S^\vee$ and $\omega_S^{k-1}$  respectively in \eqref{eqn:HHCoeff}, since $(\omega_S)_n\cong \omega_{\Hilb^nS}$ as well as their tensor powers.
\end{proof}

\section{Deformation theory of Hilbert schemes of points on surfaces}
\label{sec:Deformation}

Let us first mention some previously known results on the deformation theory of $\Hilb^nS$.   
\begin{itemize}
	\item Fantechi \cite[Theorems 0.1 and 0.3]{Fantechi-Deformation-1995} showed that for a smooth projective surface $S$ with $H^1(S, \mathscr{O}_S)\otimes H^0(S, T_S)=0$ and $H^0(S, \omega_S^\vee)=0$ (for example when $S$ is of general type, or an Enriques surface), the natural map between the Kuranishi spaces $\Def(S)\to \Def(\Hilb^nS)$ is an isomorphism (as germs of analytic spaces).
	\item Hitchin \cite[\S 4.1]{Hitchin-Deformation-2012} showed that for a compact complex surface with $H^1(S, \mathscr{O}_S)=0$, we have a split short exact sequence
	\begin{equation}
	0\to H^1(S, T_S)\to H^1(\Hilb^nS, T_{\Hilb^nS})\to H^0(S, \omega_S^\vee)\to 0.
	\end{equation}
\end{itemize}
Clearly, both results can be recovered from \Cref{thm:main-deformation}.

\begin{proof}[Proof of \Cref{thm:main-deformation}]
	Specializing to $L=\omega^\vee_S$ in \Cref{prop:TwistedHodgeGroupsOfHilb}, we get
	\begin{equation}
	H^q(\Hilb^nS, T_{\Hilb^nS})\cong H^{2n-1, q}(\Hilb^nS, \omega^\vee_n)\cong 
	\bigoplus_{\substack{\lambda\dashv n\\\lambda=(1^{a_1}\dots r^{a_{r}})}} \bigoplus_{\substack{\sum p_k=n-1+|\lambda|\\\sum q_k=q+|\lambda|-n}}\bigotimes_{k=1}^r H^{p_k, q_k}(S^{(a_k)},  \omega^{-k}_{(a_k)}).
	\end{equation}
	Now in the summation,  we can assume $p_k\leq 2a_k$ for any $k$. Therefore, 
	\begin{equation}
	2\sum_k a_k\geq \sum_k p_k=n-1+|\lambda|=-1+\sum_k (k+1)a_k,
	\end{equation}
	hence $\sum_k (k-1)a_k\leq 1$. As a result, $a_2= 0$ or 1, and $a_k=0$ for all $k\geq 3$. In other words, only two partitions can contribute in the above direct sum, namely,  $\lambda=(1^n)$ and $(1^{n-2} 2^1)$. 
	
	For $\lambda=(1^n)$,  we have $r=1$, $p_1=2n-1$, $q_1=q$, hence the contribution is 
	\begin{equation}
	\label{eqn:Contribution-1}
	H^{2n-1, q}(S^{(n)}, \omega^\vee_{S^{(n)}})\cong H^q(S^n, T_{S^n})^{\mathfrak{S}_n}.
	\end{equation}
	For $\lambda=(1^{n-2}2^1)$, we have $r=2$, $p_1=2n-4$, $p_2=2$, $q_1+q_2=q-1$, hence the contribution is
	\begin{align}
	\label{eqn:Contribution-2}
	\begin{split}
	&\bigoplus_{q_1+q_2=q-1} H^{2n-4, q_1}(S^{(n-2)}, \omega_{S^{(n-2)}}^\vee)\otimes H^{2, q_2}(S, \omega_S^{-2})\\
	\cong  &\bigoplus_{q_1+q_2=q-1} H^{ q_1}(S^{(n-2)}, \mathscr{O})\otimes H^{q_2}(S, \omega_S^{\vee}).
	\end{split}
	\end{align}
	Summing the two contributions \eqref{eqn:Contribution-1} and \eqref{eqn:Contribution-2} proves \eqref{eqn:main-Cohomology-Tangent}. Then \eqref{eqn:H^0T}, \eqref{eqn:H^1T} and \eqref{eqn:H^2T} follow immediately.
\end{proof}

\subsection{Examples}

\begin{itemize}
	\item For $S=\PP^2$, which is rigid, its Hilbert scheme however has non-trivial deformations, related to the non-commutative deformations of $\PP^2$ via Sklyanin algebras by Nevins--Stafford \cite{NevinsStafford} and Naeghel--Van den Bergh \cite{Naeghel-VdB-2005}. More generally, as explained in Hitchin \cite{Hitchin-Deformation-2012}, Poisson structures on a complex surface $S$, related to its non-commutative deformations, give rise to geometric deformations of $\Hilb^nS$. See \cite{LiChunyi-DeformationHilbDelPezzo} when $S$ is a del Pezzo surface.
	\item If $S$ is a surface of general type, or regular of Kodaira dimension 1, or an Enriques surface, then Fantechi's result applies, and the natural map $\Def(S)\to \Def(\Hilb^nS)$ is an isomorphism. For example, for curves $C_1, C_2$ of genus $\geq 2$, $\Def(\Hilb^n(C_1\times C_2))\cong \Def(C_1)\times \Def(C_2)$, and for a curve $C$ of genus $\geq 3$, $\Def(\Hilb^n(C^{(2)}))\cong \Def(C^{(2)})\cong \Def(C)$. Compare these to the classical fact that $\Def(C)\cong \Def(C^{(n)})$ for a curve $C$ of genus at least 3, by Fantechi \cite{Fantechi-DeformationSymmetricProducts} (generalizing Kempf \cite{Kempf-DeformationSymmetricProducts}).
	\item If $S$ is K\"ahler and has torsion canonical bundle, then $\Hilb^nS$ is also K\"ahler and has torsion canonical bundle. By the Bogomolov--Tian--Todorov theorem, generalized by Ran \cite{Ran} and Kawamata \cite{Kawamata-UnobstructedDeformation}, $\Def(\Hilb^nS)$ is unobstructed (i.e. smooth). 
	\item For $S$ a K3 surface, and $n>1$, $\Def(\Hilb^nS)$ is smooth of dimension 21, hence 1-dimensional higher than $\Def(S)$. In view of Hitchin's result, the anti-canonical section is responsible for the extra direction of deformations. This universal family is most naturally studied in the context of compact hyper-K\"ahler manifolds, see Beauville \cite{Beauville-JDG} and Fujiki \cite{Fujiki-Katata}.
	\item If $S$ is a 2-dimensional complex torus, and $n>1$, $\Def(\Hilb^nS)$ is smooth of dimension 9. Let us describe the deformations for $n>2$: we have $\Hilb^n(S)=K_{n-1}(S)\times^{\Gamma_n} S$, where $\Gamma_n\cong (\ZZ/n\ZZ)^4$ is the group of $n$-torsion points of $S$, acting diagonally on the product of the generalized Kummer variety $K_{n-1}(S)$ and $S$. Note that $K_{n-1}(S)$ is a compact hyper-K\"ahler manifold with $\dim \Def(K_{n-1}(S))=\dim\Def(S)+1=5$. A general deformation of $\Hilb^n(S)$ is of the form $K\times^{\Gamma_n} A$, where $K$ is a deformation of $K_{n-1}(S)$ and $A$ is a deformation of $S$, the diagonal action of $(\ZZ/n\ZZ)^4$ persists after deformations.
	\item If $S$ is a bielliptic surface, since $h^0(S, T_S)=h^1(S, \mathscr{O}_S)=1$ and $H^0(S, \omega^\vee_S)=0$, \eqref{eqn:H^1T} implies that for $n>1$,
	\begin{equation}
	\dim\Def(\Hilb^nS)=\dim \Def(S)+1=\begin{cases}
	3 &\text{ if } \operatorname{ord}(\omega_S)=2;\\
	2 &\text{ if } \operatorname{ord}(\omega_S)=3, 4, 6;
	\end{cases}
	\end{equation}
	The extra (unobstructed) deformation direction is related to the extra deformation direction of the $(2n-1)$-dimensional strict Calabi--Yau manifold constructed in \cite[Theorem 3.5]{OguisoSchroer-EnriquesManifolds}. Note that this case is not covered by the results of Fantechi and Hitchin. See also the discussion in \cite[Section 4.3 and Example 5.6]{BFK}.
\end{itemize}

\section{Nested Hilbert schemes}
\label{sec:NestedHilb}

\subsection{Basic definitions}
Let $S$ be a compact complex surface and $n$ a positive integer. Let $\Hilb^{n, n+1}S$ be the nested Hilbert scheme parametrizing $(\xi, \xi')$ with $\xi\in \Hilb^nS$ and $\xi'\in \Hilb^{n+1}S$ such that the ideal sheaves satisfy $I_{\xi'}\subset I_{\xi}$. $\Hilb^{n, n+1}S$ is sometimes denoted by $S^{[n, n+1]}$ in the literature.  By Cheah \cite{Cheah-CellularDecomp-1998}, $\Hilb^{n, n+1}S$ is a compact complex manifold of dimension $2n+2$.

There are natural morphisms
\begin{align}
\begin{split}
\phi\colon	\Hilb^{n, n+1}S& \to \Hilb^nS\\
(\xi, \xi')&\mapsto \xi,\\
\psi\colon	\Hilb^{n, n+1}S &\to \Hilb^{n+1}S\\
(\xi, \xi')&\mapsto \xi',\\
\rho\colon 	\Hilb^{n, n+1}S &\to S\\
(\xi, \xi')&\mapsto \xi'/\xi,\\
\end{split}
\end{align}
where $\xi'/\xi$ denotes the residual point of $\xi$ in $\xi'$.
It is clear that the morphism
\begin{equation}
\begin{split}
(\phi, \rho)\colon \Hilb^{n, n+1}S &\to \Hilb^nS\times S\\
(\xi, \xi')&\mapsto (\xi, \xi'/\xi)
\end{split}
\end{equation}
is a birational map; in fact, it can be identified with the blow-up morphism of $\Hilb^nS\times S$ along the universal subscheme $Z_n\subset \Hilb^nS\times S$; see Lehn \cite[Proposition 3.8]{Lehn-LectureHilbert}.

Recall that for any line bundle $L$ on $S$, in \eqref{eqn:Ln} we have defined a natural line bundle $L_n$  on $\Hilb^nS$. 
Now given any two line bundles $L, L'$ on $S$, we have the following natural line bundle on $\Hilb^{n, n+1}S$:
\begin{equation}
\phi^*L_n\otimes \rho^*L'.
\end{equation}

\begin{rmk}
	Pulling back line bundles of the form $L_{n+1}$ via $\psi$ does not give extra new line bundles on $\Hilb^{n,n+1}S$. Indeed, from the following commutative diagram
	\begin{equation}
	\xymatrix{
		\Hilb^{n, n+1}S \ar[r]^{(\phi, \rho)} \ar[d]_{\psi} \ar[dr]^{\pi}& \Hilb^nS\times S \ar[d]^{\pi_n\times \id_S}\\
		\Hilb^{n+1}S \ar[dr]_{\pi_{n+1}}& S^{(n)}\times S \ar[d]^{s}\\
		& S^{(n+1)}
	}
	\end{equation}
	together with the fact that $s^*L_{(n+1)}\cong L_{(n)}\boxtimes L$ (as they both pull-back to $L^{\boxtimes (n+1)}$ on $S^{n+1}$)
	we see that there is an isomorphism of line bundles:
	\begin{equation}
	\phi^*L_n\otimes \rho^*L\cong \psi^*L_{n+1}.
	\end{equation}
\end{rmk}

\subsection{Proof of \Cref{thm:main-nested-cohomology}}
The goal of this section is to prove \Cref{thm:main-nested-cohomology}, which determine all the twisted Hodge groups and twisted Hodge numbers of the $\Hilb^{n, n+1}S$ with value in the natural line bundle $\phi^*L_n\otimes \rho^*L'$, for any line bundles $L, L'$ on $S$. We use a similar method as for $\Hilb^nS$ in \Cref{sec:Proof}.

Let $X:=\Hilb^{n, n+1}S$ and $Y:=S^{(n)}\times S$. Consider the composition morphism $\pi:=(\pi_n\times \id_S)\circ (\phi, \rho)$
\begin{equation}
\pi\colon X\to Y.
\end{equation}

As is shown in Cheah \cite{Cheah-VirtualHodgePolynomial}, G\"ottsche \cite{Gottsche-MotiveHilb}, and de Cataldo--Migliorini \cite{deCataldoMigliorini-semismallMotive}, $\pi$ is semismall and admits the following stratification.

For any $\lambda\dashv n$, write $\lambda=(1^{a_1}2^{a_2}\cdots r^{a_r})$ as before, and define
\begin{equation}
I_{\lambda}:=\{j~|~ a_j>0\}\sqcup \{0\}.
\end{equation}
Let $\tilde{P}(n)=\{(\lambda, j)~|~ \lambda\dashv n, j\in I_\lambda\}$.

For any $(\lambda, j)\in \tilde{P}(n)$, set 
\begin{equation}
Y_{\lambda, j}:=\{(z, x)\in S^{(n)}\times S ~|~ \operatorname{mult}_xz=j\}.
\end{equation}
Then 
\begin{equation}
Y=\bigsqcup_{\substack{\lambda\dashv n\\ j\in I_{\lambda}}}Y_{\lambda, j}
\end{equation}
is a stratification by locally closed smooth subvarieties with 
\begin{equation}
\label{eqn:dim-strata-Y}
\dim Y_{\lambda, j}=
\begin{cases}
2|\lambda|+2 \quad &\text{ if } j=0;\\
2|\lambda| \quad &\text{ if } j\neq 0.\\
\end{cases}
\end{equation}

For any $(\lambda, j)\in \tilde{P}(n)$, the restriction of $\pi$ to the preimage of $Y_{\lambda, j}$ gives rise to a fiber bundle
\begin{equation}
\pi_{\lambda, j}\colon X_{\lambda, j}\to Y_{\lambda, j},
\end{equation}
with fibers all isomorphic to 
\begin{equation}
F_{\lambda, j}=
\begin{cases}
\prod_{k=1}^r \mathbb{B}_k^{a_k} \quad &\text{ if } j=0;\\
\prod_{k\neq j} \mathbb{B}_k^{a_k} \times \mathbb{B}_j^{a_j-1}\times \Hilb^{j,j+1}(\CC^2)_0 \quad &\text{ if } j\neq 0,\\
\end{cases}
\end{equation}
where $\mathbb{B}_m$ denotes the Brian\c con variety, and $\Hilb^{j,j+1}(\CC^2)_0$ is the Hilbert scheme parametrizing nested subschemes of $\CC^2$ of length $j$ and $j+1$ supported at the origin, both are irreducible by \cite{Briancon} and by \cite{Cheah-CellularDecomp-1998} respectively, with dimension
\begin{equation}
\label{eqn:dim-Fiber}
\dim F_{\lambda, j}=
\begin{cases}
n-|\lambda| \quad &\text{ if } j=0;\\
n-|\lambda|+1 \quad &\text{ if } j\neq 0.\\
\end{cases}
\end{equation}
Comparing \eqref{eqn:dim-strata-Y} and \eqref{eqn:dim-Fiber}, we see that all strata are relevant. Note also that the variation of Hodge structure is 
\begin{equation}
\label{eqn:VHS}
\dim V_{\lambda, j}\cong
\begin{cases}
\QQ(|\lambda|-n) \quad &\text{ if } j=0;\\
\QQ(|\lambda|-n-1) \quad &\text{ if } j\neq 0.\\
\end{cases}
\end{equation}

\begin{lemma}
	\label{lemma:NormalizationAndPullback-nested}
	Let  $(\lambda, j)\in \tilde{P}(n)$.  Let $L$ and $L'$ be two line bundles on $S$.
	\begin{enumerate}[label=\emph{\roman*})]
		\item If $j=0$, then 
		\begin{align}
		\begin{split}
		\iota_{\lambda, j}\colon S^{(a_1)}\times \cdots\times S^{(a_r)}\times S &\to S^{(n)}\times S\\
		(z_1, \dots, z_r, x) &\mapsto (\sum_{k} kz_k, x)
		\end{split}
		\end{align}
		is a finite birational morphism, and factorizes through the normalization of $Y_{\lambda, j}$.
		We have 
		\begin{equation}
		\iota_{\lambda, j}^*(L_{(n)}\boxtimes L')\cong L_{(a_1)}\boxtimes L^2_{(a_2)}\boxtimes\cdots \boxtimes L^r_{(a_r)}\boxtimes L'.
		\end{equation}
		
		\item If $j\neq 0$, then 
		\begin{align}
		\begin{split}
		\iota_{\lambda, j}\colon S^{(a_1)}\times \cdots S^{(a_j-1)}\times \cdots\times S^{(a_r)}\times S &\to S^{(n)}\times S\\
		(z_1, \dots, z_r, x) &\mapsto (jx+\sum_{k} kz_k, x)
		\end{split}
		\end{align}
		is a finite birational morphism, and factorizes through the normalization of $Y_{\lambda, j}$.
		We have 
		\begin{equation}
		\iota_{\lambda, j}^*(L_{(n)}\boxtimes L')\cong L_{(a_1)}\boxtimes L^2_{(a_2)}\boxtimes\cdots L^j_{(a_j-1)} \boxtimes\cdots\boxtimes L^r_{(a_r)}\boxtimes (L^j\otimes L').
		\end{equation}
	\end{enumerate}
	
\end{lemma}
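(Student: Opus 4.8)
The plan is to transcribe the proof of \Cref{lemma:PullBackLineBundle} to the nested situation, dealing with the two cases separately. The case $j=0$ reduces at once to that lemma, since the extra point $x$ is then unconstrained and simply contributes an $S$-factor. The case $j\neq 0$ is genuinely new: here the residual point $x$ is forced to coincide with one of the multiplicity-$j$ points of the cycle, and tracking this coincidence through an equivariant covering diagram is what produces the twist $L^{\otimes j}\otimes L'$ in the last factor.

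For the geometric assertions when $j=0$, a dimension count gives $\overline{Y_{\lambda,0}}=\overline{Y_\lambda}\times S$ and the morphism factors as $\iota_{\lambda,0}=\iota_\lambda\times\id_S$; hence finiteness, birationality and identification with the normalization all follow from the corresponding properties of $\nu_\lambda$ established earlier, the normalization of $\overline{Y_\lambda}\times S$ being $\bigl(\prod_k S^{(a_k)}\bigr)\times S$. For $j\neq 0$, the source $\prod_{k\neq j}S^{(a_k)}\times S^{(a_j-1)}\times S$ is a product of symmetric powers, hence normal; and $\iota_{\lambda,j}$ is finite and generically injective onto $\overline{Y_{\lambda,j}}$ because over a general point $(z,x)$ of $Y_{\lambda,j}$ the multiplicity-$j$ cluster equal to $x$ is singled out by the second coordinate, while the remaining clusters are recovered from their multiplicities. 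A finite birational morphism from a normal variety is the normalization, which settles the first claim in both cases.

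The line bundle computation for $j=0$ is then immediate: $\iota_{\lambda,0}^*(L_{(n)}\boxtimes L')\cong(\iota_\lambda^*L_{(n)})\boxtimes L'$, and \Cref{lemma:PullBackLineBundle} rewrites the first factor as $L_{(a_1)}\boxtimes\cdots\boxtimes L^r_{(a_r)}$. For $j\neq 0$ I would form the commutative square analogous to the one in \Cref{lemma:PullBackLineBundle}: the source cover $\varpi'$ by the group $\prod_k\mathfrak{S}_{a_k}$ with $a_j$ replaced by $a_j-1$, and the target cover $\varpi\times\id_S\colon S^n\times S\to S^{(n)}\times S$, along which $L_{(n)}\boxtimes L'$ pulls back to $L^{\boxtimes n}\boxtimes L'$. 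The lift $\widetilde{\iota_{\lambda,j}}$ sends an ordered tuple to the point of $S^n$ obtained by repeating each $S^{(a_k)}$-coordinate $k$ times, repeating each $S^{(a_j-1)}$-coordinate $j$ times, and adjoining $j$ extra copies of the residual point $x$, while the last $S$-factor records $x$. Pulling back $L^{\boxtimes n}\boxtimes L'$ through $\widetilde{\iota_{\lambda,j}}$ then yields $L^{\boxtimes a_1}\boxtimes\cdots\boxtimes(L^j)^{\boxtimes(a_j-1)}\boxtimes\cdots\boxtimes(L^r)^{\boxtimes a_r}\boxtimes(L^{\otimes j}\otimes L')$, and taking invariants via the equivalence \eqref{eqn:V-bundle} produces exactly $L_{(a_1)}\boxtimes\cdots\boxtimes L^j_{(a_j-1)}\boxtimes\cdots\boxtimes L^r_{(a_r)}\boxtimes(L^j\otimes L')$.

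The only real subtlety, and the step I would check most carefully, is the definition of the lift $\widetilde{\iota_{\lambda,j}}$: one must verify that the $j$ adjoined copies of $x$ on the $S^n$-side are exactly what is needed for the square to commute, so that upon restriction to the $S$-factor they merge with $L'$ to give the twist $L^{\otimes j}\otimes L'$. Everything else is a direct repetition of the bookkeeping already carried out for \Cref{lemma:PullBackLineBundle}.
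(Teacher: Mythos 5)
Your proposal is correct and follows exactly the route the paper intends: the paper's own proof merely cites Cheah for the geometric assertions and says the pullback computation is ``straightforward and similar to \Cref{lemma:PullBackLineBundle}'', omitting all details. Your write-up supplies precisely those omitted details --- the reduction of the $j=0$ case to \Cref{lemma:PullBackLineBundle}, and for $j\neq 0$ the commutative covering square with the lift adjoining $j$ copies of the residual point $x$ (which is indeed the step that produces the twist $L^{\otimes j}\otimes L'$) --- so there is nothing to correct.
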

\begin{proof}
	The assertions about the maps are due to Cheah \cite{Cheah-VirtualHodgePolynomial}. The computation of pullback line bundles is  straightforward and similar to \Cref{lemma:PullBackLineBundle}. We omit the details.
\end{proof}

\begin{proof}[Proof of \Cref{thm:main-nested-cohomology}]
	The overall proof scheme is as the proof of \Cref{thm:main-cohomology}. We only sketch some main steps. 
	
	Step 1. Similarly to \Cref{prop:Isom-HodgeModules}, applying \Cref{thm:BorhoMacPherson-HM} to the semismall map $\pi\colon X\to Y$, and use \Cref{lemma:NormalizationAndPullback-nested} and \Cref{lemma:BirationalFinite}, we get an isomorphism in $\HM^\p(Y, 2n+2)$:
	\begin{align}
	\begin{split}
	\R\pi_*\QQ_X^H[2n+2]\cong &\bigoplus_{\lambda\dashv n} (\iota_{\lambda, 0})_* (\ICH_{S^{(a_1)}}\boxtimes\cdots \boxtimes \ICH_{S^{(a_r)}}\boxtimes \ICH_S)(\QQ(|\lambda|-n))\\
	&\oplus \bigoplus_{\substack{(\lambda,j)\in \tilde{P}(n)\\ j\neq 0}}(\iota_{\lambda, j})_* (\ICH_{S^{(a_1)}}\boxtimes\cdots \boxtimes \ICH_{S^{(a_j-1)}}\boxtimes\cdots\boxtimes\ICH_{S^{(a_r)}}\boxtimes \ICH_S)(\QQ(|\lambda|-n-1))
	\end{split}
	\end{align}
	Similarly to \Cref{cor:IsomDModules}, taking the isomorphism of underlying filtered $\D$-modules:
	\begin{align}
	\label{eqn:IsomD-Module-nested}
	\begin{split}
	\pi_+(\omega_X, F_\bullet)\cong &\bigoplus_{\lambda\dashv n} (\iota_{\lambda, 0})_+ (\ICH_{S^{(a_1)}}\boxtimes\cdots \boxtimes \ICH_{S^{(a_r)}}\boxtimes \ICH_S, F_{\bullet-|\lambda|+n})\\
	&\oplus \bigoplus_{\substack{(\lambda,j)\in \tilde{P}(n)\\ j\neq 0}}(\iota_{\lambda, j})_+ (\ICH_{S^{(a_1)}}\boxtimes\cdots \boxtimes \ICH_{S^{(a_j-1)}}\boxtimes\cdots\boxtimes\ICH_{S^{(a_r)}}\boxtimes \ICH_S, F_{\bullet-|\lambda|+n+1} ).
	\end{split}
	\end{align}
	
	Step 2. Similarly to \Cref{prop:PushforwardOfOmega}, applying the functor $\gr^F_{-p}\circ \DR$ to both sides of \eqref{eqn:IsomD-Module-nested}, and use \Cref{thm:Strictness}, we get an isomorphism in $\operatorname{D^b_{coh}(Y)}$:
	\begin{align}
	\begin{split}
	\R\pi_*\Omega^p_X[2n+2-p]\cong  &\bigoplus_{\substack{\lambda\dashv n\\\lambda=(1^{a_1}\dots r^{a_{r}})}}(\iota_{\lambda, 0})_* \gr^F _{-p-|\lambda|+n} \DR (\ICH_{S^{(a_1)}}\boxtimes\cdots \boxtimes \ICH_{S^{(a_r)}}\boxtimes \ICH_S)\\
	&\oplus \bigoplus_{\substack{(\lambda,j)\in \tilde{P}(n)\\ j\neq 0}}(\iota_{\lambda, j})_* \gr^F_{-p-|\lambda|+n+1} \DR(\ICH_{S^{(a_1)}}\boxtimes\cdots \boxtimes \ICH_{S^{(a_j-1)}}\boxtimes\cdots\boxtimes\ICH_{S^{(a_r)}}\boxtimes \ICH_S)
	\end{split}
	\end{align}
	As a result, we get the analogue of \Cref{prop:PushforwardOfOmega}: in $\operatorname{D^b_{coh}(Y)}$, we have an isomorphism
	\begin{align}
	\label{eqn:RpiOmegap-nested}
	\begin{split}
	\R\pi_*\Omega_{X}^p\cong &\bigoplus_{\substack{\lambda\dashv n\\\lambda=(1^{a_1}\dots r^{a_{r}})}}\bigoplus_{p_0+\cdots p_r=p+|\lambda|-n}(\iota_{\lambda,0})_*\left(\Omega_{S^{(a_1)}}^{[p_1]}\boxtimes\cdots\boxtimes\Omega_{S^{(a_r)}}^{[p_r]}\boxtimes \Omega_S^{[p_0]}[|\lambda|-n]\right)\\
	&\oplus \bigoplus_{\substack{\lambda\dashv n\\\lambda=(1^{a_1}\dots r^{a_{r}})\\j\neq 0, a_j>0}}\bigoplus_{p_0+\cdots p_r=p+|\lambda|-n-1}(\iota_{\lambda,j})_*\left(\Omega_{S^{(a_1)}}^{[p_1]}\boxtimes\cdots\boxtimes\Omega^{[p_j]}_{S^{(a_j-1)}}\boxtimes\cdots\boxtimes\Omega_{S^{(a_r)}}^{[p_r]}\boxtimes \Omega_S^{[p_0]}[|\lambda|-n-1]\right)
	\end{split}
	\end{align}
	
	Step 3. Similarly to \Cref{prop:TwistedHodgeGroupsOfHilb}, we tensor  \eqref{eqn:RpiOmegap-nested} with  line bundle $L_{(n)}\boxtimes L'$ and take hyper-cohomology. Using \Cref{lemma:NormalizationAndPullback-nested} and \Cref{prop:SymmetricPowers}, together with projection formula and K\"unneth formula, we obtain
	
	\begin{align}
	\label{eqn:TwistedHodge-pq-nested}
	\begin{split}
	&H^{p,q}(X,\phi^*L_n\otimes \rho^*L')\\\cong & H^q(X, \Omega_X^p\otimes \pi^*(L_{(n)}\boxtimes L'))\\
	\cong &\bigoplus_{\lambda\dashv n} \bigoplus_{\substack{p_0+\cdots+p_r=p+|\lambda|-n\\q_0+\cdots+q_r=q+|\lambda|-n}}\left(\bigotimes_{k=1}^r H^{p_k,q_k}(S^{(a_k)}, L^k_{(a_k)})\otimes H^{p_0, q_0}(S, L')\right)\\
	\oplus&\bigoplus_{\substack{\lambda\dashv n\\\lambda=(1^{a_1}\dots r^{a_{r}})\\j\neq 0, a_j>0}}\bigoplus_{\substack{\sum_{k=0}^r p_k=p+|\lambda|-n-1\\\sum_{k=0}^rq_k=q+|\lambda|-n-1}}\left(H^{p_1,q_1}(S^{(a_1)}, L_{(a_1)})\otimes\cdots \otimes H^{p_j,q_j}(S^{(a_j-1)}, L^j)\otimes \cdots\otimes H^{p_r,q_r}(S^{(a_r)} , L^r_{(a_r)})\otimes H^{p_0, q_0}(S, L^j\otimes L')\right).\\
	\end{split}
	\end{align}
	
	Step 4.   Multiplying  \eqref{eqn:TwistedHodge-pq-nested} by $x^py^qt^n$, and summing over all $p, q, n\in \mathbb{N}$, one can conclude by an elementary but slightly more tedious computation similar to the end of Proof of \Cref{thm:main-cohomology} in \Cref{sec:Proof}. 
\end{proof}

\section{Final remarks and questions}
In the seminal paper of Ellingsrud, G\"ottsche and Lehn \cite{EllingsrudGottscheLehn}, it is shown that for a compact complex surface $S$, the cobordism class of $\Hilb^nS$ is determined by that of $S$.
As a consequence, for any genus, its value on $\Hilb^nS$ is determined by its value on $S$. For the $\chi_y$-genus, the relation is given by \eqref{eqn:ChiyGenusCoeff} in the introduction. More generally, the case of elliptic genus is worked out by Borisov--Libgober \cite{BorisovLibgober-EllipticGeneraSingular-Duke2003, BorisovLibgober-McKayEllipticGenera-Annals2005}, based on Dijkgraaf--Moore--Verlinde--Verlinde \cite{DMVV}:
\begin{equation}
\sum_{n\geq 0} \operatorname{Ell}(\Hilb^nS)t^n=\frac{1}{\mathbf{L}(\operatorname{Ell}(S), t)}
\end{equation}
where for any power series $f=\sum_{m,l} c_{m,l}q^my^l \in \QQ[[q, y]]$, its Borcherds-type lift is defined as $$\mathbf{L}(f,t):=\prod_{k\geq 1}\prod_{m,l}(1-t^kq^my^l)^{c_{km, l}}.$$

Ellingsrud--G\"ottsche--Lehn \cite{EllingsrudGottscheLehn} actually proved the following stronger statement. Recall that for a vector bundle $F$ on $S$, the \textit{tautological bundle} $F^{[n]}$ on $\Hilb^nS$ is the vector bundle of rank $n\rk(F)$ whose fiber at $\xi\in \Hilb^nS$ is $H^0(\xi, F|_{\xi})$. We have the relation (\cite[Section 5]{EllingsrudGottscheLehn})
\begin{equation}
\label{eqn:TautologicalBundleRelation}
\det(F^{[n]})\cong \det(F)_n\otimes \mathscr{O}(E)^{\otimes \rk(F)}.
\end{equation}
where $\mathscr{O}(E)=\det(\mathscr{O}_S^{[n]})$ is the line bundle associated to the divisor $-\frac{1}{2}D$  with $D$ the exceptional divisor of the Hilbert--Chow morphism. 

\begin{theorem}[{\cite[Theorem 4.1]{EllingsrudGottscheLehn}}]
	\label{thm:EGL-StrongForm}
	Let $S$ be a smooth projective complex surface and let $F_1, \cdots, F_m$ be holomorphic vector bundles on $S$ with $\rk(F_i)=r_i$.  For any polynomial $P$ in Chern classes of $T_{\Hilb^nS}$ and Chern classes of $F_1^{[n]}, \cdots, F_m^{[n]}$, there exists a universal polynomial $\tilde{P}$ depending only on the ranks $r_1, \cdots, r_m$, Chern classes of $T_S$ and the Chern classes of $F_1, \cdots, F_m$, such that 
	\begin{equation}
	\int_{\Hilb^nS}P=\int_S \tilde{P}.
	\end{equation}
\end{theorem}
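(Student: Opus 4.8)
The plan is to follow the inductive strategy of Ellingsrud--G\"ottsche--Lehn, using the nested Hilbert scheme $\Hilb^{n,n+1}S$ of \Cref{sec:NestedHilb} as a correspondence between $\Hilb^{n+1}S$ and $\Hilb^nS\times S$. By multilinearity in the Chern classes it suffices to treat the case where $P$ is a monomial, and I would induct on $n$; the base cases $n=0$, where $\Hilb^0S=\pt$, and $n=1$, where $\Hilb^1S=S$ and $F_i^{[1]}=F_i$, are clear. So assume the universality statement over $\Hilb^nS$ and deduce it over $\Hilb^{n+1}S$.

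The induction rests on two geometric inputs. Since $\dim\Hilb^{n,n+1}S=2n+2=\dim\Hilb^{n+1}S$, the map $\psi\colon\Hilb^{n,n+1}S\to\Hilb^{n+1}S$ is generically finite of degree $n+1$ (a generic length-$(n+1)$ subscheme admits $n+1$ colength-one subschemes), so by the projection formula
\begin{equation}
\int_{\Hilb^{n+1}S}P=\frac{1}{n+1}\int_{\Hilb^{n,n+1}S}\psi^*P.
\end{equation}
On the other hand, by Lehn \cite[Proposition 3.8]{Lehn-LectureHilbert} the morphism $b:=(\phi,\rho)\colon\Hilb^{n,n+1}S\to\Hilb^nS\times S$ is the blow-up along the universal subscheme $Z_n$, of codimension $2$; write $E$ for its exceptional divisor (not to be confused with the divisor in \eqref{eqn:TautologicalBundleRelation}).

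The heart of the argument is to rewrite $\psi^*P$ in terms of classes pulled back through $b$ and the exceptional class $[E]$, and then to push forward. For each $F_i$, restriction along $\xi\subset\xi'$ gives a short exact sequence
\begin{equation}
0\to\rho^*F_i\otimes\mathcal{O}(-E)\to\psi^*F_i^{[n+1]}\to\phi^*F_i^{[n]}\to 0,
\end{equation}
the precise twist being immaterial, so $c(\psi^*F_i^{[n+1]})$ is a universal polynomial in $c(\phi^*F_i^{[n]})$, $c(\rho^*F_i)$ and $[E]$. Likewise, comparing the relative tangent complexes of $\psi$ and of the blow-up $b$ expresses $c(\psi^*T_{\Hilb^{n+1}S})$ as a universal polynomial in $b^*c(T_{\Hilb^nS\times S})=\phi^*c(T_{\Hilb^nS})\cdot\rho^*c(T_S)$, in $[E]$, and in the pulled-back Chern classes of the normal bundle $N_{Z_n/\Hilb^nS\times S}$. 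Substituting, $\psi^*P$ becomes a universal polynomial in $[E]$ and in $b^*$ of the tautological data of $\Hilb^nS\times S$. Pushing forward along the birational $b$, the projection formula together with the blow-up Segre-class formulas writes each $b_*\!\big(b^*\gamma\cdot[E]^k\big)$ as $\gamma$ times a universal polynomial in the Segre classes of $N_{Z_n/\Hilb^nS\times S}$. Hence the integral above becomes an integral over $\Hilb^nS\times S$ of a universal expression in the tautological data of $\Hilb^nS$, that of $S$, and the classes of $Z_n$. Decomposing the fundamental class by K\"unneth, integrating out the $S$-factors (which produces Chern numbers of $S$ and of the $F_i$ as coefficients), and invoking the inductive hypothesis for $\Hilb^nS$, every such integral equals a universal polynomial in the Chern numbers of $S$ and of the $F_i$; dividing by $n+1$ closes the induction.

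The main obstacle is the bookkeeping at the blow-down step: one must verify that all the classes created by the center $Z_n$ and the exceptional divisor $E$ --- in particular the Chern and Segre classes of $N_{Z_n/\Hilb^nS\times S}$ and the restriction-to-$Z_n$ operations --- remain within the class of tautological expressions to which the inductive hypothesis applies, so that after integrating out the $S$-direction only genuine Chern numbers of $S$ and the $F_i$ survive. This is exactly the content of EGL's analysis of the universal subscheme and the associated boundary recursion. The reason the resulting $\tilde P$ is universal is that none of the ingredients --- the degree $n+1$ of $\psi$, the blow-up along $Z_n$, the tautological sequences, and the normal bundle $N_{Z_n}$ --- depends on the surface $S$: each is pulled back from, or restricted along, a construction uniform in $S$, which forces the final polynomial in the Chern numbers to be independent of $S$.
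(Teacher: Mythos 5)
A preliminary remark: the paper offers no proof of this statement --- it is quoted verbatim as \cite[Theorem 4.1]{EllingsrudGottscheLehn} --- so your sketch can only be measured against the original argument. You have correctly identified its geometric skeleton: induction on $n$ through $\Hilb^{n,n+1}S$, the generically finite degree-$(n+1)$ map $\psi$, the blow-up description of $(\phi,\rho)$, and the tautological short exact sequence relating $\psi^*F_i^{[n+1]}$ to $\phi^*F_i^{[n]}$. But the two steps you treat as routine are exactly where the content of the proof lies, and neither goes through as written.

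First, the tangent bundle. There is no workable comparison of ``relative tangent complexes of $\psi$ and of $b$'': $\psi$ is generically finite with complicated ramification, and $\psi^*T_{\Hilb^{n+1}S}$ is not expressible in terms of $b^*c(T_{\Hilb^nS\times S})$, $[E]$ and a normal bundle of $Z_n$. Ellingsrud--G\"ottsche--Lehn instead eliminate the tangent bundle at the outset, writing $[T_{\Hilb^nS}]$ in $K$-theory in terms of relative Ext's of the universal ideal sheaf $\mathcal{I}_{Z_n}$ along the projection $\Hilb^nS\times S\to \Hilb^nS$, so that by Grothendieck--Riemann--Roch its Chern classes become universal polynomials in pushforwards of $\operatorname{ch}(\mathscr{O}_{Z_n})$ and $\operatorname{td}(T_S)$; this is the idea your sketch is missing. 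Second, the blow-down. The center $Z_n\subset\Hilb^nS\times S$ is \emph{singular} for $n\geq 2$, so it has no normal bundle, and the smooth-center formulas you invoke for $b_*\left(b^*\gamma\cdot[E]^k\right)$ in terms of Segre classes of $N_{Z_n/\Hilb^nS\times S}$ are not available. Both problems are resolved in \cite{EllingsrudGottscheLehn} by the same device: the induction is run not over the integrals appearing in the statement but over a strictly larger class of tautological integrals over $\Hilb^nS\times S^k$ built from the K\"unneth components of $\operatorname{ch}(\mathscr{O}_{Z_n})$ and $\operatorname{ch}(F_i)$, for which the $K$-theoretic recursion expressing $\mathscr{O}_{Z_{n+1}}$ as $\mathscr{O}_{Z_n}$ plus a twisted sheaf on the residual graph closes the loop. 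Your final paragraph correctly names this as ``the main obstacle,'' but with the inductive hypothesis as you state it the induction does not close; strengthening it appropriately is the actual work of the proof.
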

\begin{rmk}
	Y.-P. Lee and Pandharipande \cite{LeePandharipande-AlgebraicCobordismPairs} developped a more general theory of algebraic cobordism of pairs, where a \textit{pair} consists of a smooth variety and a vector bundle on it. 
	\Cref{thm:EGL-StrongForm} implies that for any vector bundle $F$ on $S$, the cobordism class of the pair $(\Hilb^nS, F^{[n]})$ is determined by the cobordism class of the pair $(S, F)$. In particular, thanks to \eqref{eqn:TautologicalBundleRelation}, for any line bundle $L$, the cobordism class of $(\Hilb^nS, L_n)$ is determined by the cobordism class of $(S, L)$.
\end{rmk}

In \cite[Theorem 1.3]{Gottsche-RefinedVerlinde-2020}, G\"ottsche established a formula computing the \textit{elliptic genus with coefficients} $\operatorname{Ell}(\Hilb^nS, L_n\otimes \mathscr{O}(rE))$, for any $r\in \ZZ$, hence in particular a formula for $\chi_y(\Hilb^nS, L_n\otimes \mathscr{O}(rE))$ (\cite[Corollary 1.4]{Gottsche-RefinedVerlinde-2020}).

\begin{question}
	Can we refine G\"ottsche's formula for $\chi_y(\Hilb^nS, L_n\otimes \mathscr{O}(rE))$ by computing the following twisted Hodge groups
	\begin{equation}
	H^{p,q}(\Hilb^nS, L_n\otimes \mathscr{O}(rE))
	\end{equation}
	or at least their dimensions?
	
	By \eqref{eqn:TautologicalBundleRelation}, it is equivalent to computing for any vector bundle $F$ on $S$, the twisted Hodge groups:
	\begin{equation}
	H^{p,q}(\Hilb^nS, \det(F^{[n]})).
	\end{equation}
	
\end{question}

\begin{question}
	Can we compute the following cohomology groups
	\begin{equation}
	H^q(\Hilb^nS, \Omega^p\otimes F^{[n]}),
	\end{equation}
	in terms of cohomology groups on powers of $S$ with values in some natural coherent sheaves involving $F$?
\end{question}

\bibliographystyle{abbrv}
\bibliography{main}

\medskip \medskip

\noindent{Universit\'e de Strasbourg, Institut de recherche mathématique avancée (IRMA),  France} 

\medskip \noindent{\texttt{lie.fu@math.unistra.fr}}

\end{document}